\newtheorem{theorem}{Theorem}
\newtheorem{lemma}{Lemma}
\theoremstyle{definition}
\newtheorem{definition}{Definition}
\theoremstyle{definition}
\theoremstyle{definition}
\newtheorem{remark}{Remark}
\theoremstyle{definition}
\newtheorem{assumption}{Assumption}
\DeclareMathOperator*{\argmin}{arg\,min}
\newcommand*\dist{\mathop{}\!\mathrm{dist}}
\newcommand*\diff{\mathop{}\!\mathrm{d}}
\newcommand*\Diff[1]{\mathop{}\!\mathrm{D}}
\newcommand{\ack}{\section*{Acknowledgments}}
\author[1]{Guozhi Dong}
\author[2,3]{Michael Hintermüller}
\author[3]{Clemens Sirotenko}
\affil[1]{\small School of Mathematics and Statistics, HNP-LAMA, Central South University, Changsha 410083, China}
\affil[2]{\small Institute for Mathematics, Humboldt-Universität zu Berlin, Unter den Linden 6, 10099 Berlin, Germany}
\affil[3]{\small Weierstrass Institute for Applied Analysis and Stochastics (WIAS), Mohrenstraße 39, 10117 Berlin, Germany}
\affil[ ]{\texttt{guozhi.dong@csu.edu.cn}, \texttt{hint@math.hu-berlin.de}, \texttt{sirotenko@wias-berlin.de}}
\begin{document}
\title{Dictionary Learning Based Regularization in Quantitative MRI: A Nested Alternating Optimization Framework}

\maketitle

\begin{abstract}
In this article we propose a novel regularization method for a class of nonlinear inverse problems that is inspired by an application in {\it quantitative} magnetic resonance imaging (qMRI). The latter is a special instance of a general dynamical image reconstruction technique, wherein a radio-frequency pulse sequence gives rise to a time discrete physics-based mathematical model which acts as a side constraint in our inverse problem. To enhance reconstruction quality, we employ dictionary learning as a data-adaptive regularizer, capturing complex tissue structures beyond handcrafted priors. For computing a solution of the resulting non-convex and non-smooth optimization problem, we alternate between updating the physical parameters of interest via a Levenberg-Marquardt approach and performing several iterations of a dictionary learning algorithm. This process falls under the category of nested alternating optimization schemes. We develop a general overall algorithmic framework whose convergence theory is not directly available in the literature.  Global sub-linear and local strong linear convergence in infinite dimensions under certain regularity conditions for the sub-differentials are investigated based on the Kurdyka–Łojasiewicz inequality. Eventually, numerical experiments demonstrate the practical potential and unresolved challenges of the method.
\end{abstract}
\noindent{\it Keywords\/}: Quantitative MRI, quantitative image reconstruction, regularization, variational methods, machine learning, non-convex and non-smooth optimization

\maketitle

\section{Introduction}
Magnetic Resonance Imaging (MRI) is a well-established, non-invasive medical imaging method. From a mathematical perspective, it involves solving an ill-posed inverse problem to reconstruct an image of a specific body region based on noisy Fourier measurements, which capture the spatial frequency content of tissue magnetization at a given time. In conventional MRI, the temporal evolution of magnetization is typically neglected, and the resulting image mainly reflects contrast information which depends on scanner settings and sampling patterns. In contrast, recent developments in quantitative MRI (qMRI) focus on estimating biophysical parameters by utilizing the time-dependent behavior of the magnetization \cite{dong2019quantitative,brown2014magnetic,ma2013magnetic,shafieizargar2023systematic}. A prominent technique in this domain is Magnetic Resonance Fingerprinting (MRF) \cite{ma2013magnetic}, which enables the simultaneous estimation of multiple tissue properties by using a specially designed acquisition sequence that generates unique signal evolutions for different tissue types. These measured signal evolutions are then compared against a precomputed database of signal trajectories, each generated by numerically solving the Bloch equations under a wide range of parameter combinations. By identifying the closest match in this database, MRF infers the underlying tissue parameters. Despite its success, MRF has two main drawbacks: solving a highly undersampled linear inverse problem first to obtain an estimate of the evolutions and second, relying on a precomputed discretized database, which can cause errors. To address this, recent works \cite{dong2019quantitative,scholand2023quantitative,zimmermann2024pinqi} propose model-based reconstructions using the Bloch equation \cite{bloch1946nuclear} directly, resulting in a nonlinear inverse problem formulation
\begin{equation}
    f^\delta = F(u_{true}) + \eta,  \qquad u_{true} \in U_{ad}, \label{eq:nonlinear_inverse_pro_into}
\end{equation}
for an operator $F:X \to Y$, function spaces $X,Y$ and a pre defined set $U_{ad} \subset X$. We follow this second model based approach here, where the aim is to recover the physical parameters \( u_{true} \in U_{ad}\) from noisy measurements \( f^\delta \). The noise \( \eta \) has typically zero mean and amplitude \( \delta \geq 0 \). We detail this nonlinear inverse problem, including notation and key properties, in \autoref{sec:nonlinear_inverse_pro}. Due to the strong undersampling necessary in MRF, regularization is essential to suppress artifacts and noise that arise during image reconstruction. Classical variational methods address this challenge by formulating and solving typically nonsmooth optimization problems, such as
\begin{equation}
    \min_{u \in U_{ad}} \frac{1}{2} \| F(u) - f^\delta \|_{Y}^2 + \lambda R(u).\label{eq:variational_approach_intro}
\end{equation}
These concepts remain fundamental and provide the basis for many state-of-the-art techniques. Typically, \( R:X \to \overline{\mathbb{R}}:= \mathbb{R} \cup\{+\infty \} \) is a convex, often nonsmooth regularization term encoding prior knowledge about the true solution \( u_{true} \). The parameter \( \lambda > 0 \) balances the regularizer \( R(u) \) and the \emph{data fidelity term} \( \|F(u) - f^\delta\|^2_{Y} \). Common examples include Total Variation (TV) and sparsity-based regularization. For TV, \( R(u) = \| Du \|_{\mathcal{M}} \), the Radon norm of the distributional gradient for \( u \in BV(\Omega) \); see \cite{rudin1992nonlinear, chambolle1997image, attouch2014variational}. In sparse regularization \cite{mallat1999wavelet, scherzer2009variational}, the solution is assumed sparse in a basis \( (\varphi_n)_{n \in \mathbb{N}} \subset H \), a Hilbert space, leading to a corresponding regularizer
\begin{equation}
    R(u) = \sum_{n \in \mathbb{N}} |\langle u ,\varphi_n \rangle_{H} |. \notag
\end{equation}
Methods of this kind have been extensively studied; see \cite{scherzer2009variational,kaltenbacher2008iterative,ito2014inverse,engl1996regularization} for classical references. However, they are often too general and show limitations in specific tasks. In quantitative imaging, for instance, the aim is to recover accurate parameter values, not just contrast. Classical regularizers can introduce systematic biases—e.g., total variation tends to compress values, underestimating magnitudes. Another drawback is their reliance on simple a-priori assumptions, such as sparsity in wavelet or Fourier domains, which may fail to capture the intricate structures present in real-world data. These handcrafted bases often overlook complex spatial or temporal correlations inherent in biomedical images. To address this limitation, data-driven regularization methods have emerged, that leverage training data to model more realistic image priors and capture richer patterns in the data \cite{arridge2019solving, ravishankar2019image}. These often outperform classical approaches like \eqref{eq:variational_approach_intro}, but sacrifice interpretability and robustness. For instance neural network-based methods, which represent the state of the art in many imaging applications, often function as black boxes and can exhibit undesired sensitivity to small perturbations in $f^\delta$ or changes in algorithmic parameters \cite{genzel2022solving,antun2020instabilities}. In this work, we regularize the solution of \eqref{eq:nonlinear_inverse_pro_into} using a blind dictionary learning approach, aiming to balance interpretability and the use of training data. Dictionary learning has been successfully applied in imaging, particularly in qualitative MRI \cite{ravishankar2010mr, ravishankar2012learning,ravishankar2015efficient}. In \autoref{sec:regularization_by_dict}, we detail our regularization strategy. The method operates on patches extracted from the image \( u \in X = H_0^1(\Omega)\), solving the inverse problem while decomposing each patch into a product of a shared dictionary \( D \in \mathcal{D} \) and sparse coefficients \( C \in \mathcal{C} \), with \( \mathcal{D}, \mathcal{C} \) being predefined matrix sets. This ultimately leads to a non-convex, non-smooth optimization problem of the type
\begin{align}
   \min_{(u,D,C) \in U_{ad} \times \mathcal{D} \times \mathcal{C}}J(u,D,C) = \frac{1}{2} \| F(u) - f^\delta \|_{Y}^2 &+ \frac{\alpha}{2} \| \nabla u \|_{L^2(\Omega)}^2  \notag \\
   &+  \lambda \left( \frac{1}{2}\| P\mathbb{D}^\mathfrak{h} u - DC \|_F^2 + \beta \|C\|_1 \right). \label{eq:P0_intro}
\end{align}
Here, \( P \) extracts patches from the discretized image \( \mathbb{D}^\mathfrak{h} u \) with mesh-size-parameter $\mathfrak{h}>0$, vectorizes them, and assembles them into a large matrix. Additionally, \( \lambda, \alpha, \beta > 0 \) represent regularization parameters (see \autoref{sec:regularization_by_dict} for details on the notation). Problems like \eqref{eq:P0_intro} are typically solved using alternating optimization techniques, following the general pattern outlined below in \autoref{fig:alt_optimization}:\\[-0.3cm]

\begin{figure}[h!]
\centering
\noindent\begin{minipage}{0.95\textwidth}
\hrule
\vspace{0.4em}
\begin{itemize}
    \item \textbf{Initialize:} \( u_0, D_0, C_0 \in U_{ad} \times \mathcal{D} \times \mathcal{C} \)
    \item \textbf{Repeat for } \( k = 0, 1, 2, \ldots \)
    \begin{itemize}
        \item[1.] Update \( u_k \) through a (possibly complex) reconstruction step.
        \item[2.] Update the dictionary \( D_k \) (computationally cheap).
        \item[3.] Update the sparse coefficients \( C_k \) (computationally cheap).
    \end{itemize}
\end{itemize}
\vspace{0.3em}
\hrule
\end{minipage}
\caption{Sketch of the alternating optimization scheme for solving \eqref{eq:P0_intro}.}
\end{figure}\label{fig:alt_optimization}
\vspace{-0.2cm}
\noindent
The approach outlined in \autoref{fig:alt_optimization} may be suboptimal. To see why, consider the problem
\begin{equation}
        \min_{D \in \mathcal{D}, C \in \mathcal{C}} \frac{1}{2} \|DC  - P \mathbb{D}^\mathfrak{h} u \|_F^2 + \beta \| C \|_1, \label{eq:dict_intro}
\end{equation}
which is typically approached through alternating updates of $D$ and $C$ until convergence to a stationary point. The iterative procedure can be interpreted as progressively denoising each patch in $P \mathbb{D}^\mathfrak{h} u$, with the final reconstruction represented by $D\cdot C$, where $D$ and $C$ solve \eqref{eq:dict_intro}. In this regard, dictionary learning alone acts as a powerful denoising tool, effectively capturing structure and suppressing noise.  However, if only a single update of $D$ and $C$ is performed per iteration, the denoising effect may be significantly weakened. A natural enhancement, therefore, is to carry out multiple updates of $D$ and $C$ within each iteration, allowing the representation to better adapt before updating the physical parameter $u$. This observation leads to the idea of nested optimization, where the dictionary learning step is solved more thoroughly within an inner loop, approaching stationarity. The resulting nested scheme is sketched in \autoref{fig:nested_optimization}.
\begin{figure}[h!]
\centering
\noindent\begin{minipage}{0.95\textwidth}
\hrule
\vspace{0.4em}
\begin{itemize}
    \item \textbf{Initialize:} \( u_0, D_0, C_0 \in U_{ad} \times \mathcal{D} \times \mathcal{C} \)
    \item \textbf{Repeat for } \( k = 0, 1, 2, \ldots \)
    \begin{itemize}
        \item[1.] Update \( u_k \) via a (possibly complex) reconstruction step.
        \item[2.] Repeat until some stationarity measure is sufficiently small:
        \begin{itemize}
            \item[2.1.] Update the dictionary \( D_k \) (computationally cheap).
            \item[2.2.] Update the sparse coefficients \( C_k \) (computationally cheap).
        \end{itemize}
    \end{itemize}
\end{itemize}
\vspace{0.3em}
\hrule
\end{minipage}
\caption{Outline of the nested alternating optimization scheme.}
\end{figure}\label{fig:nested_optimization}
\noindent
In \autoref{sec:nested_optimization}, we detail the nested optimization algorithm and analyze its convergence toward stationary points of the objective introduced in \autoref{sec:regularization_by_dict}. We provide a global convergence analysis with classical sublinear rates and study local strong convergence under the Kurdyka–Łojasiewicz (KL) framework; see \cite{bolte2010characterizations, attouch2010proximal, attouch2013convergence} for details on this fundamental concept. The one-step method fits into the convergence framework of \cite{frankel2015splitting}, extending \cite{attouch2013convergence}. In contrast, the nested scheme proposed in this work is, to the best of our knowledge, not covered by existing literature—primarily because it does not satisfy the sufficient decrease property commonly required in convergence analyses. 
Let us now summarize the main contributions of this work below.
\paragraph{Contributions}
\begin{enumerate}
    \item We propose a model-based regularization framework for qMRI that directly integrates the solution operator of the Bloch equation into the image reconstruction process. In contrast to the original two-step approach in \cite{ma2013magnetic}, this formulation leads to a single, nonlinear inverse problem that jointly accounts for the underlying physics and the reconstruction task. While model-based strategies are not new (e.g., \cite{dong2019quantitative, scholand2023quantitative}), their combination with dictionary learning in this context—along with the infinite-dimensional, resolution-independent formulation—is novel. Our method employs orthogonal dictionary learning to regularize the nonlinear inverse problem introduced in \autoref{sec:nonlinear_inverse_pro}, aiming to mitigate contrast bias while preserving data adaptivity and interpretability. To the best of our knowledge, the most closely related work is \cite{kofler2023quantitative}, which applies dictionary learning to a modified qMRI model focused on reconstructing \( R_1 := 1/T_1 \) and \( m_0 \). However, their approach does not address convergence theory and operates on pre-discretized images.
    \item Our framework is flexible and, in principle, allows the dictionary learning step to be replaced by more advanced denoising operators—such as those proposed for linear problems in \cite{ravishankar2019image}—opening avenues for broader applications beyond the specific orthogonal setting considered here.
    \item We tackle the resulting nonconvex and nonsmooth optimization problem using a nested optimization scheme inspired by \cite{gur2023convergent, gur2023nested}, but under different assumptions and with the u-variable defined in an infinite-dimensional space. Our setting falls outside the framework of \cite{gur2023nested}, as it lacks the partial strong convexity assumed therein, and we do not impose the Kurdyka–Łojasiewicz (KL) property on the full objective. While our nested update scheme is more general, the resulting convergence guarantees are correspondingly weaker—we do not establish global strong convergence. It is also worth noting the extensive literature on block alternating optimization strategies (e.g., \cite{attouch2013convergence, bolte2014proximal, phan2023inertial}), which typically rely on the KL inequality in finite-dimensional spaces and often assume that each block update solves a global nonconvex subproblem. These conditions do not apply in our framework.
\end{enumerate}

\section{Dictionary learning based regularization for quantitative MRI}\label{sec:nonlinear_inverse_pro}
In this section, we briefly outline the fundamental ideas behind MRI signal generation, following primarily the presentation in \cite{dong2019quantitative}. For detailed discussions, we refer to standard textbooks \cite{brown2014magnetic, liang2000principles, wright1997magnetic} and related research \cite{davies2014compressed, dong2019quantitative}. 
\subsection{Brief introduction to the physics of qMRI}
In MRI, the domain $\Omega \subset \mathbb{R}^d, d\leq 3$ represents the anatomical region of interest selected by the clinician for imaging. The observed image contrast within $\Omega$ is a result of the spatiotemporal dynamics of magnetic moments associated with hydrogen nuclei in the tissue. These dynamics are governed by the underlying biophysical properties of the tissue at each spatial location $x \in \Omega$, with each voxel capturing the aggregate behavior of the magnetic moments within its volume.  These magnetic moments evolve under the influence of an externally controlled magnetic field $B(t,x)$, where $t \in [0,T]$ denotes time and $T>0$ is the final time horizon. The time evolution of the magnetic moment located at \( x \in \Omega \), represented by the function \( m: [0,T] \times \Omega \to \mathbb{R}^3 \), is modeled as a solution of the Bloch equations
\begin{equation}
\partial_t m(x,t) = m(t,x) \times \gamma B(t,x) - \begin{pmatrix}
m_1(t,x)/T_2(x) \\
m_2(t,x)/T_2(x) \\
(m_3(t,x) - m_{eq} )/T_1(x) 
\end{pmatrix}, 
\quad m(0,x) = \begin{pmatrix}
0 \\ 
0 \\
m_0(x)\label{eq:continuous_bloch}
\end{pmatrix}. 
\end{equation} 
Here \( m_{\mathrm{eq}} \in \mathbb{R} \) is the equilibrium magnetization along the \( z \)-axis, \( m_0(x) \) denotes the initial longitudinal magnetization and $\gamma>0$ denotes the gyromagnetic ratio, a dimensionless physical constant. While fundamental to the physics of MRI, it does not play a central role in the mathematical analysis presented here. The quantities \(T_1(x)\) and \(T_2(x)\) are spatially varying relaxation times that characterize the recovery of the magnetic moment toward equilibrium \(m_{\mathrm{eq}} \in \mathbb{R}^3\) following excitation. These parameters are of fundamental importance, as they are tissue-dependent and enable the identification of underlying biomedical properties. The goal of quantitative MRI (qMRI), and of this article in particular, is to accurately reconstruct these relaxation times. The external magnetic field driving the evolution described in \eqref{eq:continuous_bloch} typically consists of three components:
\begin{equation}
B(t,x) = B_0(x) + B_1(t,x) + (0,0,G(t)\cdot x)^\top,
\end{equation}
where $B_0(x)$ is the static (main) magnetic field, constant in time and aligned with the $z$-axis; $B_1(t,x)$ is the time-dependent radio-frequency (RF) field used to perturb the magnetization from equilibrium, typically applied briefly as an \emph{RF pulse}; and $G(t)$ is the gradient field, which encodes spatial position into frequency for spatial localization in the signal.
The time between two consecutive RF pulses is called repetition time (TR). Under suitable assumptions one can show that the signal, that is measured at a receiver coil in a very short time after an RF-pulse is applied and turned off immediately, can be approximately described by
\begin{equation}
f(t)  \approx \int_{\Omega} \rho(x)  m_{12}(t,x) e^{-i \int_0^t G(\tau) \cdot x \diff \tau} \diff x. \notag
\end{equation}
Here we introduce the notation $m_{12}(t,x) := m_1(t,x) + i m_2(t,x) \in \mathbb{C}$ and a further physical quantity, the proton-spin density $\rho :\Omega \to \mathbb{R}$ that is commonly interpreted as the local density of (predominately) hydrogen protons or “spins” located at $x \in \Omega$.
Consequently, the measurement process can be modeled by a composition of the nonlinear solution operator of the Bloch-equation and the Fourier-transform:
\begin{equation}
f(t) \approx \mathcal{F}\left[\rho(\cdot)\, m_{12}(t, \cdot)\right]\left(k(t)\right) = \mathcal{F}[y_t](k(t)) \quad \text{where}\quad k(t) = \frac{\gamma}{2\pi} \int_0^t G(\tau)\, \diff \tau.
\notag
\end{equation}
Here we set $y_{t} = \rho(\cdot) m_{12}(t, \cdot)$ and employ the continuous Fourier transform $\mathcal{F}$. At the echo time $t = TE$, measurements are taken, and by manipulating the gradient field $t \mapsto G(t)$, different frequencies can be sampled. In basic Cartesian sampling, the frequencies in two dimensions are collected along parallel lines, with one full line being sampled between two consecutive pulses; see \cite{brown2014magnetic} for a comprehensive overview of MRI physics and its practical applications, also see  \autoref{fig:linear_MRI_example}.
\noindent
\paragraph{ Classical vs. Quantitative MRI}
The goal of classical MRI is to recover the \emph{ground truth image}  
\begin{equation}
y_{TE} = \rho(\cdot)\, m_{12}(TE, \cdot),
\end{equation}  
accessible only through its (noisy and undersampled) Fourier transform, where TE refers to the echo time. Due to the need for magnetization \( m(\cdot, x) \) to relax toward equilibrium after each RF pulse, only a subset of \( k \)-space can be sampled. This makes the reconstruction problem \emph{ill-posed}, as the available data is insufficient to uniquely recover the image without additional assumptions. Under the assumption that \( y_{TE} \) remains constant during acquisition, the groundtruth image can be estimated by inverting the subsampled and noisy Fourier transform of the measured noisy data $f$, modeled as  
\begin{equation}
    f = S \circ \mathcal{F}[y_{TE}] + \eta, \label{eq:linear_MRI_IP}
\end{equation}
where \( S \) is a linear \emph{undersampling operator} that selects the measured subset of Fourier coefficients, and \( \eta \) models measurement noise. A representative solution to this \emph{linear inverse problem} in \eqref{eq:linear_MRI_IP} is shown in \autoref{fig:linear_MRI_example} using the famous Shepp-Logan phantom \cite{shepp1974fourier}. A detailed treatment of linear inverse problems in the context of MRI can be found in \cite{lustig2007sparse, knoll2011second, starck2015sparse, adcock2021compressive}.
\begin{figure}[b!]
\centering
\begin{minipage}[b]{0.23\textwidth}
        \centering
        \includegraphics[width=\textwidth]{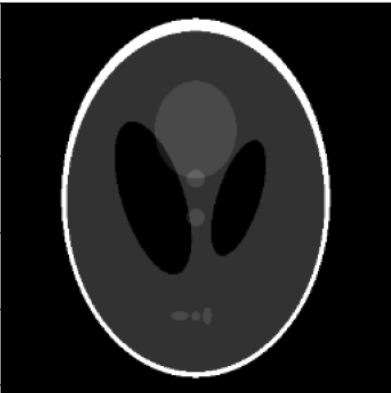}
        \footnotesize (a)
\end{minipage}
\begin{minipage}[b]{0.23\textwidth}
        \centering
        \includegraphics[width=\textwidth]{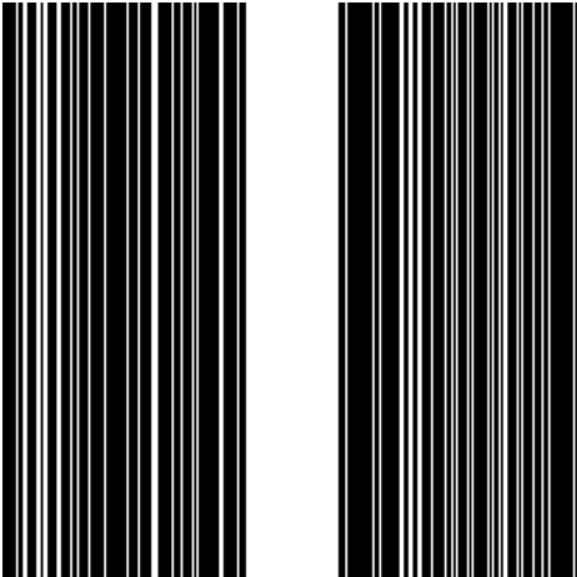}
        \footnotesize(b)
\end{minipage}
\begin{minipage}[b]{0.23\textwidth}
        \centering
        \includegraphics[width=\textwidth]{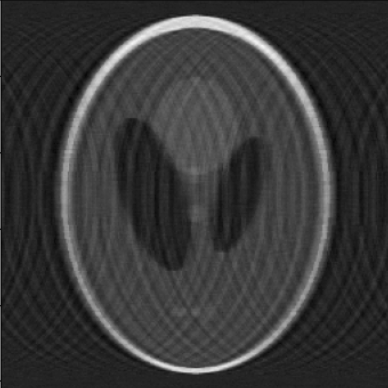}
        \footnotesize(c)
\end{minipage}   
\begin{minipage}[b]{0.23\textwidth}
        \centering
        \includegraphics[width=\textwidth]{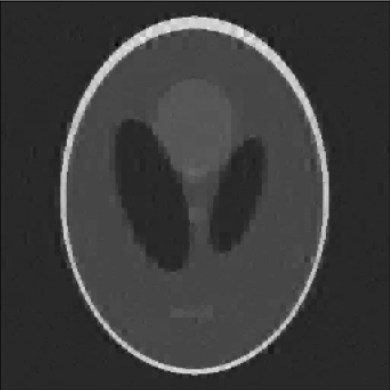}
        \footnotesize(d)
\end{minipage}   
\caption{Typical linear MRI setup: (a) Ground truth, (b) Sampling mask (white = sampled frequencies), (c) least squares reconstruction, (d) basic wavelet regularization, as in \cite{starck2015sparse}.}
\end{figure}\label{fig:linear_MRI_example}
Sampling a single image \( y_{TE} \) provides only contrast information and does not enable direct estimation of the underlying tissue parameters \( T_1 \), \( T_2 \), and \( \rho \). In contrast, quantitative MRI (qMRI) aims to recover these spatially varying parameters by acquiring Fourier measurements of multiple images sequentially over time. By sampling at multiple time points, the evolution of the magnetization trajectory \( m \) can be captured, enabling the reconstruction of \( m \) and consequently the estimation of the underlying parameters \( T_1 \), \( T_2 \), and \( \rho \).
\paragraph{Magnetic Resonance Fingerprinting (MRF)}
MRF \cite{ma2013magnetic} is a recent qMRI technique. It accelerates data acquisition by collecting a sequence of undersampled Fourier signals through rapid RF pulses using a specially designed excitation sequence. Denote the sequence of corresponding ground truth images by
\begin{equation}
y_{t_k} = \rho(\cdot)\, m_{12}(t_{k}, \cdot), \quad k = 1, \ldots, L,
\end{equation}
indexed by time points \( t_1 < \ldots < t_L \), with \( L \) typically ranging between 100 and 1000 in practice. In MRF, highly undersampled Fourier measurements in the spirit of \eqref{eq:linear_MRI_IP} are collected at each time point:
\begin{equation}
f_{t_k} = S_{t_k} \circ \mathcal{F}(y_{t_k}) + \eta_{t_k}, \label{eq:MRF_linear_IPs}
\end{equation}
where \( S_{t_k} \) denotes the sampling operator at time \( t_k \), and \( \eta_{t_k} \) models measurement noise. Then a two step procedure is applied:
\begin{enumerate}
    \item First, estimated ground truth images \( \widehat{y}_{t_k} \) are reconstructed from the undersampled data \( f_{t_k} \), typically using simple linear methods such as zero-filled inverse Fourier transform or compressed sensing techniques, such as those in \cite{starck2015sparse}.
    \item Then, in a second step, for each voxel, the temporal signal evolution \( (\widehat{y}_{t_k})_{k=1}^L \) is compared to a precomputed database of Bloch equation simulations, generated for various combinations of \( (T_1, T_2, \rho) \). The best match determines the estimated tissue parameters \( (\widehat{\rho}, \widehat{T}_1, \widehat{T}_2) \).
\end{enumerate}
In \cite{ma2013magnetic}, the authors employed an Inversion Recovery Steady-State Free Precession (IR-SSFP) sequence as an approximate model for the Bloch equation. We denote by
\begin{equation}
m_k(\mathbb{T}(x)) := m(t_k, x) \in \mathbb{R}^3,
\end{equation}
where \( m(t,x) \) denotes the solution of the continuous Bloch equations \eqref{eq:continuous_bloch} that implicitly depends on the tissue dependent relaxation times $\mathbb{T}=(T_1,T_2)(x)$ at position \( x \in \Omega \). Under the IR-SSFP sequence and suitable modeling assumptions, the magnetization dynamics (the solution of \eqref{eq:continuous_bloch}) can be approximated by a discrete-time dynamical system (see \cite{ma2013magnetic, brown2014magnetic}):
\begin{equation}
m_{k+1}(\mathbb{T}(x)) = E_k(\mathbb{T}(x)) R(\alpha_k) m_k(\mathbb{T}(x)) + b_k(\mathbb{T}(x)),  \quad m_0(x) \in \mathbb{R}^3.\label{eq:discrete_bloch_system}
\end{equation}
The matrix $R(\alpha)$ is an orthogonal rotation matrix depending on the \emph{flip angle} $\alpha_k \in (0,2\pi)$ and $E_k:\mathbb{R}^2 \to \mathbb{R}^{3 \times 3}, b_k: \mathbb{R} \to \mathbb{R}^{3}$ are given by
\begin{equation}
E_k(\mathbb{T}) = 
\begin{pmatrix}
    \exp(-\frac{TR_k}{T_2}) &0 &0 \\
    0 &\exp(-\frac{TR_k}{T_2})  &0 \\ 
    0 &0 &\exp(-\frac{TR_k}{T_1})  
\end{pmatrix},
\quad b_k(\mathbb{T}) = 
\left[ 1 - \exp\left(-\frac{TR_k}{T_1} \right)\right] 
\begin{pmatrix}
0 \\
0 \\
1
\end{pmatrix} ,\notag
\end{equation}
for $\mathbb{T} = (T_1,T_2) \in \mathbb{R}^2_{>0}$ and repetition times $\mathrm{TR}_1, \ldots, \mathrm{TR}_L > 0$, which are specified by the scanning protocol. In this way a mapping $m: \mathbb{R}^2 \to \mathbb{R}^{3 \times L}$ is defined,
\begin{equation}
    m(\mathbb{T}) :=
    \begin{bmatrix}
        m_{1}(\mathbb{T})\\
        \vdots \\
        m_{L}(\mathbb{T})
    \end{bmatrix}, \label{eq:def_m}
\end{equation}
that maps the relaxation times $\mathbb{T}=(T_1,T_2)$ to an approximation of the (time discrete) Bloch equation. We note that, with a slight abuse of notation, we refer to both the continuous solution and the time-discrete solution operator as \( m \), relying on context to distinguish between them. In most of the following sections, \( m \) refers to the time-discrete solution operator as defined in \eqref{eq:def_m}.
\begin{remark} \label{remark:extension_E_matrices}
    Note that $E_k: \mathbb{R}_{>0}^2 \to \mathbb{R}^{3 \times 3}$ and $b_k: \mathbb{R}_{>0}^2 \to \mathbb{R}^3$ are only well-defined for arguments $T_1,T_2 >0$. However the extension to $\mathbb{R}^2$ is obvious. We define
    \begin{equation}
        \Tilde{E}_k(\mathbb{T}) =
       \lim_{n \to \infty }E_k(\mathbb{T}_n) 
       \quad 
        \Tilde{b}_k(\mathbb{T}) = \lim_{n \to \infty}  b_k(\mathbb{T}_n), \notag
    \end{equation}
    where $\mathbb{T}_n \in (0,+\infty)^2$ for every $n\in \mathbb{N}$ and  $\mathbb{T}_n \to \mathrm{proj}_{[0,+\infty)^2}(\mathbb{T})$ as $n \to \infty$. 
    One can easily check that this extension is well defined and does not depend on the choice of the sequence $(\mathbb{T}_n)_n$.
    Additionally, in this way we extended $E_k$ and $b_k$ to $C^\infty$-functions that are defined everywhere on $\mathbb{R}^2$. We will from now on assume that $E_k$ and $b_k$ are defined everywhere and are smooth.
\end{remark}
\noindent
The two-step procedure of MRF has the drawback that it heavily depends on the accuracy of the precomputed Bloch simulation database. A preselected discretization of the parameter space can introduce significant errors in the final estimation and may even amplify errors originating from the initial reconstruction step used to obtain \( (\widehat{y}_{t_k})_{k=1}^L \). To overcome these limitations, we adopt an all-at-once, model-based approach, where the solution mapping of the Bloch equations is directly integrated into the forward model as proposed in \cite{dong2019quantitative}. This results in a nonlinear inverse problem, as described in the following subsection.
\noindent
\subsection{The inverse problem of qMRI and its mathematical properties} 
Throughout the following part, we use the notation  
\( u = (\rho, \mathbb{T})= (\rho, T_1, T_2)  \in L^2(\Omega, \mathbb{R}^3) \).
to summarize the parameters of interest. We now formulate the inverse problem of recovering these parameters from measurement data, closely following \cite{dong2019quantitative}. Using the discrete Bloch system \eqref{eq:discrete_bloch_system}, the full data generation process—from the true parameters \( u_{\text{true}} \in L^2(\Omega, \mathbb{R}^3) \) to the noisy measurements \( f^\delta \in L^2(\Omega, \mathbb{C}^L) \)—is modeled by the \emph{forward equation}:
\begin{equation}
    f^\delta = F(u_{\text{true}}) + \eta, \quad \|\eta\|_{L^2(\Omega, \mathbb{C}^L)} \leq \delta, \label{eq:def_datageneration_process}
\end{equation}
where \( \eta \) represents complex noise of level \( \delta \geq 0 \). The \emph{forward operator} \( F: L^2(\Omega, \mathbb{R}^3) \to L^2(\Omega, \mathbb{C}^L) \) is defined by
\begin{equation}
    F(u) := \left[ S_1 \mathcal{F}(\Pi(u)_1), \ldots, S_L \mathcal{F}(\Pi(u)_L) \right], \label{eq:definition_forward_operator}
\end{equation}
where again \( \mathcal{F} \) denotes the spatial Fourier transform and \( S_k \) are linear sampling operators selecting measured frequencies at each time step as in \autoref{fig:linear_MRI_example} and \eqref{eq:MRF_linear_IPs}. The \emph{Bloch solution operator} \( \Pi: L^2(\Omega, \mathbb{R}^3) \to L^2(\Omega, \mathbb{C}^L) \) maps the tissue parameters to the time-discrete transverse magnetization sequence:
\begin{equation}
    \Pi(u)(x) := \rho(x) 
    \begin{bmatrix}
        m_{1}(\mathbb{T}(x))_{12} \\
        \vdots \\
        m_{L}(\mathbb{T}(x))_{12}
    \end{bmatrix}, \label{eq:def_Pi}
\end{equation}
where $m(\cdot)$ denotes the mapping defined in \eqref{eq:def_m} as the solution operator of the time-discrete dynamical system in \eqref{eq:discrete_bloch_system}, and the subscript again indicates that we consider the first two components of this vector, interpreted as a complex-valued quantity in the transverse plane. Thus, the inverse problem of qMRI is to estimate \( u \approx u_{\text{true}} \) from noisy data \( f^\delta \).
\begin{remark}[Representation as a superposition operator]
The operator {\(\Pi: L^2(\Omega,\mathbb{R}^3) \to L^2(\Omega,\mathbb{C}^L)\)} defined in \eqref{eq:def_Pi} can be represented as a \emph{superposition operator}, also known as a Nemytskii operator. Specifically, we may define the function
\begin{equation}
\pi: \mathbb{R}^3 \to \mathbb{C}^L, \quad \pi(u) := \rho \cdot 
\begin{bmatrix}
m_1(\mathbb{T})_{12} \\
\vdots \\
m_L(\mathbb{T})_{12}\label{eq:def_pi}
\end{bmatrix},
\end{equation}
where now $u=(\rho, \mathbb{T}) \in \mathbb{R}^3$ and observe that
\begin{equation}
\Pi(u)(x) = \pi(u(x)) \quad \text{for a.e. } x \in \Omega. \notag
\end{equation}
\noindent In this formulation the map \(\pi(\cdot)\) acts pointwise on each voxel or pixel value {\(u(x) = (\rho(x), T_1(x), T_2(x)) \in \mathbb{R}^3\)}. The Bloch solution operator \(\Pi\) is then obtained by applying \(\pi\) to every spatial point \( u(x) \in \Omega\).
Such superposition operators are well-studied in nonlinear functional analysis; see \cite{goldberg1992nemytskij, appell1990nonlinear}.
\end{remark}
\noindent
Many properties of the function \( \pi \) extend to the associated superposition operator \( \Pi \). We therefore begin by collecting differentiability and stability properties of \( \pi \) in the following theorem. As usual, \( \mathcal{L}^k(X, Y) \) denotes the space of bounded \( k \)-linear operators between Banach spaces \( X \) and \( Y \), and \( f'(x) \in \mathcal{L}(X, Y) \), \( f''(x) \in \mathcal{L}^2(X, Y) \) denote the first and second Fréchet derivatives of a function \( f: X \to Y \) at \( x \in X \). We also use the norm \( \|f\|_{C(X,Y)} = \sup_{x \in X} \|f(x)\|_Y \) for continuous, bounded functions \( f: X \to Y \).
\begin{theorem}[Properties of the pointwise solution map]\label{theorem_properties_pi_map}
Let $\pi : \mathbb{R}^{3} \to \mathbb{C}^L$ and $m:\mathbb{R}^2 \to \mathbb{R}^{3 \times L}$ be defined as in \eqref{eq:def_pi} and \eqref{eq:def_m}. Then the following statements hold:
\begin{itemize}
\item[(i)] (Differentiability) The mappings $\pi: \mathbb{R}^3 \to \mathbb{C}^L$ and $m:\mathbb{R}^2 \to \mathbb{R}^{3\times L}$ infinitely differentiable. 
\item[(ii)](Boundedness) There is a $C = C(L,m_0)>0$ such that the following bounds hold true:
\begin{equation}
    \|m \|_{C(\mathbb{R}^2,\mathbb{R}^{3 \times L})} \leq C \quad \| m' \|_{C(\mathbb{R}^2,\mathcal{L}(\mathbb{R}^2,\mathbb{R}^{3 \times L})} \leq C \quad
    \| m'' \|_{C(\mathbb{R}^2,\mathcal{L}^2(\mathbb{R}^2,\mathbb{R}^{3 \times L})} \leq C. \notag
\end{equation}
\item[(iii)] (Lipschitz-properties) Denote \( u_i = (\rho_i, \mathbb{T}_i) \) for $i=1, 2$. If $|\rho_i| \leq b$ for $i=1,2$ and some real number $b>0$, then we find a constant $L = L(b)>0$ that depends on $b$, such that 
\begin{align*}
    \| \pi(u_1) - \pi(u_2) \|_2 
    &\leq L \| u_1 - u_2\|_2, \\
    \|  \pi'(u_1) - \pi'(u_2) \|_{\mathcal{L}(\mathbb{R}^3, \mathbb{C}^{L})} 
    &\leq L \| u_1 - u_2\|_2.
\end{align*}
\end{itemize}
\end{theorem}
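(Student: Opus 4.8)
The plan is to treat the three claims in sequence, using the recursion \eqref{eq:discrete_bloch_system} for $m$ as the organizing principle and the flatness of the exponential entries of $E_k, b_k$ as the technical engine. Throughout I abbreviate the transverse-component map by $M(\mathbb{T}) = (m_1(\mathbb{T})_{12}, \dots, m_L(\mathbb{T})_{12})^\top \in \mathbb{C}^L$, so that $\pi(\rho, \mathbb{T}) = \rho\, M(\mathbb{T})$, and I recall from \autoref{remark:extension_E_matrices} that $E_k, b_k$ are globally defined $C^\infty$ maps on $\mathbb{R}^2$.

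For (i) I would argue by induction on $k$. The base case $m_0 \equiv \mathrm{const}$ is trivially smooth, and the induction step writes $m_{k+1} = E_k R(\alpha_k) m_k + b_k$ as a finite sum of products of the $C^\infty$ maps $E_k, b_k$ with the $C^\infty$ map $m_k$ and the constant matrix $R(\alpha_k)$; since products and sums of smooth maps are smooth, each $m_k$ is $C^\infty$. Passing to $M$ applies only the bounded linear ``take-first-two-components-as-a-complex-number'' operation, and $\pi = \rho\, M(\mathbb{T})$ is then a product of smooth maps, hence $C^\infty$.

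The crux is (ii). The key observation is a uniform contraction-type bound: since each $\mathrm{TR}_k > 0$, the diagonal entries $\exp(-\mathrm{TR}_k/T_i)$ of $E_k$ lie in $[0,1]$ on the projected domain, so the spectral norm satisfies $\|E_k(\mathbb{T})\| \le 1$; together with orthogonality $\|R(\alpha_k)\| = 1$ and $\|b_k(\mathbb{T})\| \le 1$, the recursion yields $\|m_{k+1}\| \le \|m_k\| + 1$, hence $\|m_k\| \le \|m_0\| + k \le \|m_0\| + L$. For the derivative bounds I would first show that every entry of $E_k, b_k$ has all derivatives uniformly bounded on the domain: writing $g(T) = \exp(-c/T)$ with $c = \mathrm{TR}_k > 0$, one checks $g^{(n)}(T) = Q_n(1/T)\exp(-c/T)$ for a polynomial $Q_n$, and since $\exp(-c/T)$ decays faster than any polynomial in $1/T$ as $T \to 0^+$ while $g^{(n)}(T) \to 0$ as $T \to \infty$ for $n \ge 1$, each $g^{(n)}$ is bounded on $(0,\infty)$, a property preserved by the extension of \autoref{remark:extension_E_matrices}. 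With a single constant $C$ bounding $\|E_k\|, \|E_k'\|, \|E_k''\|, \|b_k'\|, \|b_k''\|$ (finitely many $k$), differentiating the recursion once and applying the product rule gives $\|m_{k+1}'\| \le \|E_k'\|\,\|m_k\| + \|E_k\|\,\|m_k'\| + \|b_k'\| \le \|m_k'\| + C'$, using the bound on $\|m_k\|$ and $\|E_k\| \le 1$; induction yields a bound linear in $k \le L$. Differentiating twice and using the Leibniz expansion $(E_k R m_k)'' = E_k'' R m_k + 2 E_k' R m_k' + E_k R m_k''$ reduces $\|m_{k+1}''\|$ to $\|m_k''\| + C''$ via the bounds already obtained, closing the induction.

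Finally (iii) follows from (ii) by exploiting the structure $\pi = \rho\, M(\mathbb{T})$, which is linear in $\rho$. By (ii) the maps $M, M', M''$ are bounded by some $C$, so $M$ is $C$-Lipschitz and $M'$ is $C$-Lipschitz by the mean value inequality. For the first estimate I split $\pi(u_1) - \pi(u_2) = \rho_1\bigl(M(\mathbb{T}_1) - M(\mathbb{T}_2)\bigr) + (\rho_1 - \rho_2) M(\mathbb{T}_2)$ and bound using $|\rho_1| \le b$ and $\|M\| \le C$. For the second I differentiate, $\pi'(u)[\delta\rho, \delta\mathbb{T}] = \delta\rho\, M(\mathbb{T}) + \rho\, M'(\mathbb{T})[\delta\mathbb{T}]$, and split $\pi'(u_1) - \pi'(u_2)$ into a term controlled by $\|M(\mathbb{T}_1) - M(\mathbb{T}_2)\|$, a term $|\rho_1|\,\|M'(\mathbb{T}_1) - M'(\mathbb{T}_2)\|$ bounded via Lipschitz continuity of $M'$, and a term $|\rho_1 - \rho_2|\,\|M'(\mathbb{T}_2)\|$; collecting the $b$-dependent constants gives the claim. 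The main obstacle is the derivative boundedness in (ii): everything else is bookkeeping on the recursion, but the uniform control of all derivatives of $\exp(-c/T)$ up to the boundary $T \to 0^+$ is exactly what turns the local estimates into the global bounds claimed.
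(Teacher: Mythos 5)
Your proposal is correct and follows essentially the same route as the paper's own proof: induction on the recursion \eqref{eq:discrete_bloch_system} to bound $m$, $m'$, $m''$ uniformly (using $\|R(\alpha_k)\|=1$ and the boundedness of the extended entries of $E_k$, $b_k$ and their derivatives), followed by the mean-value and product-rule bookkeeping to obtain the Lipschitz estimates for $\pi$ and $\pi'$. The only differences are cosmetic—you use the sharper bound $\|E_k(\mathbb{T})\|\le 1$ where the paper uses a generic constant, and you justify the uniform boundedness of all derivatives of $T \mapsto \exp(-c/T)$ near $T\to 0^+$ more explicitly than the paper does.
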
 
\begin{proof}
    As the proof uses only basic calculus we postpone it to the appendix, \autoref{Appendix_B_proofs_sec_2}.
\end{proof}
\noindent
The subsequent theorem establishes properties of the corresponding superposition operator in \eqref{eq:def_Pi} using the theory, which is developed in \cite{goldberg1992nemytskij}.
\begin{theorem}[Properties of the solution operator]\label{theorem_properties_solution_operator}
Consider the map $\pi: \mathbb{R}^3 \to \mathbb{C}^L$ for bounded $\Omega \subset \mathbb{R}^d$ with Lipschitz boundary. Then the corresponding superposition operator, or the solution operator of the time discrete Bloch equation, introduced in \eqref{eq:def_Pi}, satisfies the following properties:
\begin{itemize}
        \item[(i)] The operator $\Pi$ is a mapping from $L^p(\Omega,\mathbb{R}^3)$ to $L^q(\Omega,\mathbb{C}^L)$ for exponents $1 \leq p\leq q < \infty$. 
        \item[(ii)](Frechet-differentiability) The operator $\Pi: L^p(\Omega,\mathbb{R}^3) \to L^2(\Omega,\mathbb{C}^L)$  is Frechet differentiable for $p\geq 4$. The Frechet-derivative is given by
        \begin{equation}
            \Pi' :L^p(\Omega) \to \mathcal{L}(L^p(\Omega), L^2(\Omega)) \quad  D\Pi(u)[h](x) = \pi'(u(x))h(x) \quad \text{for $p \geq4$.} \notag
        \end{equation}
        \item[(iii)](2nd order Frechet-differentiability) The operator $\Pi$ is two times Frechet differentiable as a mapping $\Pi: L^p(\Omega,\mathbb{R}^3) \to L^2(\Omega,\mathbb{C}^L)$ for $p\geq 6$. The 2nd order Frechet-derivative is given by
        \begin{equation}
            \Pi'' :L^p(\Omega) \to \mathcal{L}(L^p(\Omega), L^2(\Omega)) \quad  \Pi''(u)[h_1,h_2](x) =  \pi''(u(x))h_1(x) h_2(x) \quad \text{for $p \geq 6$.} \notag
        \end{equation}
\end{itemize}
\end{theorem}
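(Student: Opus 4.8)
The plan is to read $\Pi$ as the superposition (Nemytskii) operator $N_\pi$ generated by the smooth, $x$-independent map $\pi$, so that all three assertions become instances of the classical mapping, first- and second-order differentiability theory for superposition operators between Lebesgue spaces developed in \cite{goldberg1992nemytskij, appell1990nonlinear}. The entire argument then reduces to checking Carathéodory regularity together with the relevant growth bounds, all of which are supplied by \autoref{theorem_properties_pi_map}. The structural fact that drives every estimate is the factorization $\pi(u) = \rho\, g(\mathbb{T})$, where $g(\mathbb{T}) = (m_1(\mathbb{T})_{12}, \dots, m_L(\mathbb{T})_{12})$ collects the transverse components of $m$. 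Because \autoref{theorem_properties_pi_map}(ii) bounds $m$, $m'$ and $m''$ uniformly on all of $\mathbb{R}^2$, the factors $g, g', g''$ are globally bounded and globally Lipschitz; hence $\pi$ is linear in $\rho$ and bounded in $\mathbb{T}$, and $\pi, \pi', \pi''$ all grow at most linearly, $|\pi(u)| \le C|\rho|$ and $|\pi'(u)|, |\pi''(u)| \le C(1+|\rho|)$, with $|\rho| \le |u|$. Note that the only unboundedness is carried by $\rho$, which enters linearly; this is what lets me avoid the merely local Lipschitz bound of \autoref{theorem_properties_pi_map}(iii).

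For (i), the Carathéodory property holds trivially since $\pi$ is continuous and independent of $x$. The mapping theorem for superposition operators guarantees bounded and continuous action $N_\pi\colon L^p \to L^q$ once a growth estimate $|\pi(s)| \le a + b|s|^{p/q}$ holds with $a \in L^q(\Omega)$, which on the finite-measure domain $\Omega$ reduces to a constant $a$. The linear bound $|\pi(u)| \le C|u|$ matches $|s|^{p/q}$ exactly when the growth exponent satisfies $p/q \ge 1$, and continuity follows from Krasnoselskii's theorem under the same bound; equivalently, $N_\pi$ is bounded $L^r \to L^r$ for every $r$ and one then uses the nestedness of the Lebesgue scale on $\Omega$.

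For (ii), the candidate derivative at $u$ is the multiplication operator $M_u\colon h \mapsto \pi'(u(\cdot))h(\cdot)$. I would first show $M_u \in \mathcal{L}(L^p, L^2)$: by Hölder, multiplication by a function in $L^{2p/(p-2)}$ sends $L^p$ into $L^2$, and the linear growth places $\pi'(u(\cdot)) \in L^p(\Omega) \hookrightarrow L^{2p/(p-2)}(\Omega)$ precisely when $2p/(p-2) \le p$, i.e. $p \ge 4$. The remainder is then estimated from the pointwise decomposition
\[
\pi(u+h) - \pi(u) - \pi'(u)h = \rho\,\bigl[g(\mathbb{T}+h_\mathbb{T}) - g(\mathbb{T}) - g'(\mathbb{T})h_\mathbb{T}\bigr] + h_\rho\,\bigl[g(\mathbb{T}+h_\mathbb{T}) - g(\mathbb{T})\bigr],
\]
using the bound $C|h_\mathbb{T}|^2$ for the first bracket (from boundedness of $g''$) and $C|h_\mathbb{T}|$ for the second (from the global Lipschitz bound for $g$); a Hölder estimate controls the $L^2$ norm of the remainder by a superlinear power of $\|h\|_{L^p}$, hence $o(\|h\|_{L^p})$. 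The upgrade from Gâteaux to Fréchet differentiability and the identification of $\Pi'(u) = M_u$ is exactly the content of the abstract $C^1$-theorem, whose hypothesis is that $N_{\pi'}$ maps $L^p$ continuously into $L^{2p/(p-2)}$, again the threshold $p \ge 4$.

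For (iii) the argument is the same with one more factor. The candidate second derivative is the bounded bilinear form $(h_1,h_2) \mapsto \pi''(u(\cdot))h_1(\cdot)h_2(\cdot)$, and boundedness into $L^2$ requires $\pi''(u(\cdot)) \in L^{2p/(p-4)}$; linear growth gives $\pi''(u(\cdot)) \in L^p \hookrightarrow L^{2p/(p-4)}$ precisely for $p \ge 6$. I expect the main obstacle throughout to be the borderline Hölder bookkeeping together with the Gâteaux-to-Fréchet upgrade: the genuine nonlinearity of $\pi$ in the $\mathbb{T}$-variable forces a loss of integrability between the source $L^p$ and the target $L^2$, and it is this gap that produces the thresholds $p \ge 4$ and $p \ge 6$ and that must be tracked with care when verifying the continuity of $N_{\pi'}$ and $N_{\pi''}$ in the required Lebesgue spaces.
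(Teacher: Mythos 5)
Your proposal follows essentially the same route as the paper: read $\Pi$ as a Nemytskii operator, invoke the mapping and differentiability theorems of \cite{goldberg1992nemytskij}, and reduce all three assertions to growth conditions for $\pi$, $\pi'$, $\pi''$, which follow from the uniform bounds on $m$, $m'$, $m''$ in \autoref{theorem_properties_pi_map}(ii). Your exponent arithmetic (multiplier in $L^{2p/(p-2)}$ with $2p/(p-2)\le p$ iff $p\ge 4$; multiplier in $L^{2p/(p-4)}$ with $2p/(p-4)\le p$ iff $p\ge 6$) is exactly the paper's verification of the conditions $p/r\ge 1$ and $p/s\ge 1$ for $r=pq/(p-q)$, $s=pq/(p-2q)$, $q=2$, and parts (i)--(iii) are correct insofar as they rest on those abstract theorems.

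One step of your argument would, however, fail at precisely the borderline exponents that the theorem asserts. You claim that the pointwise decomposition of $\pi(u+h)-\pi(u)-\pi'(u)h$, together with the bounds $C|h_\mathbb{T}|^2$ and $C|h_\mathbb{T}|$ on the two brackets and a H\"older estimate, controls the $L^2$ remainder by a \emph{superlinear} power of $\|h\|_{L^p}$. For the term $\rho\,\bigl[g(\mathbb{T}+h_\mathbb{T})-g(\mathbb{T})-g'(\mathbb{T})h_\mathbb{T}\bigr]$ this is false when $p=4$: interpolating the quadratic and Lipschitz bounds gives the bracket $\le C|h_\mathbb{T}|^{1+\theta}$ for $\theta\in[0,1]$, and H\"older with $\rho\in L^p$ requires $h_\mathbb{T}\in L^{2p(1+\theta)/(p-2)}$; on a bounded domain $L^p$ embeds into this space only if $\theta\le (p-4)/2$, so at $p=4$ you are forced to $\theta=0$, i.e.\ a merely linear bound, which does not prove Fr\'echet differentiability. (For $p>4$ your estimate is fine with $\theta=\min(1,(p-4)/2)>0$.) The repair is exactly your own fallback, which is also the paper's argument: write the bracket as $\int_0^1\bigl[g'(\mathbb{T}+t h_\mathbb{T})-g'(\mathbb{T})\bigr]h_\mathbb{T}\,\mathrm{d}t$, apply H\"older, and use the \emph{continuity} of the Nemytskii operator $N_{\pi'}:L^p\to L^{2p/(p-2)}$ (Krasnoselskii / dominated convergence, guaranteed by the growth condition) to conclude $o(\|h\|_{L^p})$ without any power rate. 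The identical caveat applies to your second-order remainder at the borderline $p=6$: there the argument must go through continuity of $N_{\pi''}:L^p\to L^{2p/(p-4)}$ rather than through a power-type H\"older bound.
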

\begin{proof}
We employ the theory on abstract superposition operators developed in \cite{goldberg1992nemytskij}. The detailed proof is found again in the appendix, \autoref{Appendix_B_proofs_sec_2}.
 \end{proof}
 
\subsection{Regularization by dictionary learning}\label{sec:regularization_by_dict}
The core idea of dictionary learning is to learn a representation system directly from data, rather than using predefined bases like wavelets or radial bases. However, learning a  basis for an entire image is computationally intensive and prone to overfitting, so a common strategy is to work with small image patches. This reduces complexity while capturing local structure.
Before incorporating dictionary learning into the nonlinear inverse problem  \eqref{eq:def_datageneration_process}, we briefly review its role in regularizing linear inverse problems. The seminal work \cite{aharon2006k} introduced a two-step approach. First, a data matrix \(X = [p_1, \ldots, p_N]\), containing clean vectorized patches \(p_i \in \mathbb{R}^K\), is factorized by solving 
\begin{equation}
    \min_{D \in \mathcal{D},\, C \in \mathcal{C}} \frac{1}{2} \|DC - X\|_F^2 + \|C\|_0, \label{eq:matrix_factorization}
\end{equation}
where \(D = [\varphi_1, \ldots, \varphi_M] \in \mathbb{R}^{K \times M}\) is the dictionary of basis elements or atoms $\varphi_j$, and \(C\) holds sparse coefficients. The dictionary is often constrained (e.g., normalized columns) to avoid scaling ambiguity. The key assumption is that each patch admits an unknown sparse representation:
\begin{equation}
    p_i = \sum_{j=1}^M \varphi_j c_{ij}, \quad \text{with } \|c_{ij}\|_0 \ll M. \label{eq:assumption_dictionary_1}
\end{equation}
In the second step, the learned dictionary $D$ is used to regularize an inverse problem: Given measurements $f^\delta = F(u_{\text{true}}) + \eta$, the goal is to recover $u_{\text{true}}$ by solving the following minimization problem, while for the time being, setting aside concerns about mathematical rigor and well-posedness:
\begin{equation}
    \min_{u \in U_{ad},\, C \in \mathcal{C}} \| F(u) - f^\delta \|_{L^2(\Omega)}^2 + \frac{\lambda}{2} \left( \| P \mathbb{D}^\mathfrak{h} u - DC \|_F^2 + \beta \|C \|_0 \right), \label{eq:dict_learning_fixed}
\end{equation}
where \(\mathbb{D}^\mathfrak{h}\) discretizes \(u\), \(P\) extracts patches and $U_{ad}$ is some predefined set, see \autoref{sec:final_formulation_IP} for details.  This approach has seen wide use in linear inverse problem cf.  \cite{arridge2019solving} for an overview. 
\subsection{Final formulation of the nonlinear dictionary learning problem}\label{sec:final_formulation_IP}
While the two stage method, where a dictionary is learned before from training data, in \eqref{eq:dict_learning_fixed} already outperforms classical regularization techniques, more recent work \cite{ravishankar2012learning, ravishankar2010mr} has shown that results can even improve when the dictionary is learned \emph{on the fly}, i.e., jointly with image reconstruction, even without the use of training data. This approach—known as \emph{online dictionary learning} or \emph{blind compressed sensing}—formulates both tasks as a single optimization problem:
\begin{equation}
    \min_{u\in U_{ad}, D \in \mathcal{D}, C \in \mathcal{C}} \| F(u) - f^\delta \|_{L^2(\Omega)}^2 + \frac{\lambda}{2} \left( \| P \mathbb{
    D}^\mathfrak{h} u - DC \|_F^2 + \beta \|C \|_0 \right).\label{eq:dict_learning_variable} 
\end{equation}
Subsequent developments \cite{ravishankar2015efficient, ravishankar2013sparsifying, ravishankar2019image} have applied this framework, particularly in the context of \emph{classical} MRI. These data-adaptive methods have proven to be both interpretable and robust, while also mitigating common artifacts such as contrast bias—often introduced by classical regularization schemes like \eqref{eq:variational_approach_intro}. To ensure the well-posedness of the joint optimization problem, a small strongly convex regularization term \(\|\nabla\, \cdot\|_{L^2(\Omega)}^2\) is typically added. The final formulation of our dictionary-regularized qMRI reconstruction task is given by:
\begin{align}
\min_{(u,D,C)\in U_{ad} \times \mathcal{D} \times \mathbb{R}^{K \times M} }
\frac{1}{2} \| F(u) - f^\delta \|^2_{L^2(\Omega,\mathbb{C}^L)} + \frac{\alpha}{2} \|\nabla u \|^2_{L^2(\Omega)} +  \frac{\lambda} {2} \left( \| P \mathbb{D}^\mathfrak{h} u - D C \|_F^2 +  \beta \|C\|_1  \right), \tag{$P_0$}
\label{eq:p0}
\end{align}
for positive regularization parameters $\lambda,\alpha,\beta>0$. Here, we also replaced the seminorm \( \|\cdot\|_0 \) with its classical convex envelope \( \|\cdot\|_1 \). Let us now specify the components of the optimization problem \eqref{eq:p0}. For simplicity, we assume the spatial domain to be \(\Omega = (0,1)^2\), and define the problem setup as follows:
\begin{enumerate}
    \item \textbf{Parameter space:} The unknown parameter \(u = (\rho, T_1, T_2)\) is modeled as a vector-valued function in the Sobolev space \(H_0^1(\Omega, \mathbb{R}^3)\).
    \item \textbf{Admissible set:} The set of feasible parameters is given by
    \begin{equation}
    U_{\mathrm{ad}} = \left\{ u \in H_0^1(\Omega, \mathbb{R}^3) \,\middle|\, a \leq u_i(x) \leq b \quad \text{for a.e. } x \in \Omega,\ i = 1,2,3 \right\},
    \end{equation}
    where \(a, b \in \mathbb{R}\) are user-defined bounds ensuring physical plausibility.
    \item \textbf{Discretization operator:} For a mesh size \(\mathfrak{h} = 1/N\), \(N \in \mathbb{N}\), the discretization space is
    \begin{equation}
    U^N := \mathrm{span} \left\{ N^{-2} \mathbf{1}_{\Omega_{i,j}} : i,j = 1,\ldots,N \right\},
    \end{equation}
    where each set \(\Omega_{i,j} \subset \Omega\) is defined as \(\Omega_{i,j} = (i-1,j-1) + [0,1/N] \times [0,1/N]\). The projection \(\mathbb{D}^{\mathfrak{h}} : L^2(\Omega) \to U^N\) acts by averaging:
    \begin{equation}
    (\mathbb{D}^{\mathfrak{h}} u)|_{\Omega_{i,j}} = \frac{1}{N^2} \int_{\Omega_{i,j}} u(x)\, \mathrm{d}x. \label{eq:def_discretization_op}
    \end{equation}
    \item \textbf{Patch extraction operator:}\label{def_patch_extract} The operator \(P\) extracts (possibly overlapping) vectorized image patches from the discretized image \(\mathbb{D}^{\mathfrak{h}} u\). It maps from the space of discretized images to a matrix space of  \(X \in \mathbb{R}^{K \times N^2}\), where each column corresponds to a patch of dimension \(K\). The exact implementation of \(P\) may vary, but it is assumed to be linear and fixed throughout the reconstruction process.

    \item \textbf{Forward operator:} The forward map \(F: H_0^1(\Omega, \mathbb{R}^3) \to L^2(\Omega, \mathbb{C}^L)\) is defined by \eqref{eq:definition_forward_operator} and models the signal generation process via the Bloch dynamics and Fourier sampling.

    \item \textbf{Dictionary and coefficient spaces:} The dictionary is constrained to the Stiefel manifold
    \begin{equation}
    \mathcal{D} = O_K := \left\{ M \in \mathbb{R}^{K \times K} \,\middle|\, M^\top M = I \right\},
    \end{equation}
    enforcing orthonormal atoms. The coefficient matrix lies in \(\mathcal{C} = \mathbb{R}^{K \times N^2}\), without additional constraints.
\end{enumerate}
\noindent
Throughout this work, we make frequent use of classical Sobolev and Lebesgue spaces. For background on these function spaces and their properties, we refer to \cite{evans2022partial}.
\begin{remark}[Choice of the dictionary space]
We choose orthogonal matrices as the dictionary ansatz for their simple and efficient updates, while still ensuring good reconstruction quality. More advanced models can be incorporated, as proposed in \cite{ravishankar2019image} for linear inverse problems.\
\end{remark}
\section{A nested Levenberg-Marquardt type optimization algorithm} \label{sec:nested_optimization}
The goal of this section is to derive an optimization algorithm to find stationary points of the objective function \eqref{eq:p0} introduced earlier. To simplify the presentation, we focus on reconstructing a single function \( u \in H_0^1(\Omega) \) instead of working in \( H_0^1(\Omega, \mathbb{R}^3) \). Without loss of generality, we restrict the analysis to the case of a single measured image, i.e., $F(u) \in L^2(\Omega)$ instead of $F(u) \in L^2(\Omega, \mathbb{C}^L)$. The convergence analysis remains unchanged. As usual, we equip \( H_0^1(\Omega) \) with the inner product 
\( \langle u, v \rangle_{H_0^{1}(\Omega)} := \langle \nabla u, \nabla v \rangle_{L^2(\Omega)} \), 
which is equivalent to the standard \( H^1(\Omega) \) inner product by the Poincar\'e inequality; see \cite{attouch2014variational}. 
We remark that the entire convergence analysis can equivalently be carried out under the \( H^1(\Omega) \) norm. In fact, in the numerical experiments presented later, we will employ a discrete version of a weighted \( H^1 \)-norm. Let us now state the problem of the section: 
For regularization parameters \( \alpha, \lambda > 0 \), we consider the following class of optimization problems:
\begin{equation}
    \min_{u \in H_0^1(\Omega),\, z \in Z} J(u,z) := \frac{1}{2} \| F(u) - f^\delta \|^2_{L^2(\Omega)} + \frac{\alpha}{2} \| \nabla u \|^2_{L^2(\Omega)}+ \mathcal{I}_{U_{ad}}(u)  + h(u,z) + R(z),
    \tag{$P_1$} \label{eq:p0_general}
\end{equation}
where \( Z \) is a finite-dimensional Hilbert space. We also introduce the notation
\begin{equation}
    f^\alpha(u) := \frac{1}{2} \| F(u) - f^\delta \|_{L^2(\Omega)}^2 + \frac{\alpha}{2} \| \nabla u \|^2_{L^2(\Omega)}
    \label{eq:def_f}
\end{equation}
and we will also write $f := f^0$ at some points for simplicity. The following assumptions will be used in the analysis of \eqref{eq:p0_general}.
\begin{assumption}[Assumptions on problem class \eqref{eq:p0_general}]\label{assumptions_general}We impose the following assumptions for the subsequent analysis:
\begin{itemize}
    \item[(B1)] We assume $\Omega \subset \mathbb{R}^d$ bounded with Lipschitz boundary. The forward operator \( F: L^p(\Omega) \to L^2(\Omega) \) is twice continuously Fr\'echet differentiable for some $p>2$, such that $H_0^1(\Omega) \hookrightarrow L^p(\Omega)$ compactly.
    \item[(B2)] The derivates and function values are assumed to be bounded on $U_{ad}$, i.e. there are constants $M_i>0, i=1,2,3$ such that
    \begin{equation}
    \|F(u)\|_{L^2(\Omega)} \leq M_1 \quad 
        \|F'(u)\|_{\mathcal{L}} \leq M_2 \quad \|F''(u)\|_{\mathcal{L}} \leq M_3 \quad \text{for all $u \in U_{ad}$} \label{eq:bounded_derivatives_ass}
    \end{equation}
    \item[(B3)] The admissible set is
    \begin{equation}
    U_{ad} = \{ u \in H_0^1(\Omega) \mid a \leq u(x) \leq b \text{ a.e. in } \Omega \},
    \end{equation}
    for fixed \( 0 < a < b \).
    
    \item[(B3)] Let \( Z \) be a finite-dimensional Hilbert space. The function \( h: L^2(\Omega) \times Z \to \mathbb{R} \) satisfies:
    \begin{itemize}
        \item[(H1)] \( h \) is proper, bounded below, and twice continuously differentiable;
        \item[(H2)] For every \( z \in Z \), the mapping \( u \mapsto h(u,z) \) is convex;
        \item[(H3)] The gradient \( u \mapsto \nabla_z h(u,z) \) is Lipschitz continuous, uniformly in \( z \), i.e.,
        \begin{equation}
        \| \nabla_z h(u_1,z) - \nabla_z h(u_2,z) \|_Z \leq C \| u_1 - u_2 \|_{H_0^1(\Omega)} \quad \text{for all $z \in Z, u_1,u_2 \in U_{ad} $}.
        \end{equation}
    \end{itemize}
    \item[(B5)] The function \( R: Z \to \overline{\mathbb{R}} \) is proper, lower semicontinuous, and coercive (i.e., \( \|z\|_Z \to \infty \) implies \( R(z) \to \infty \)), but may be nonconvex.
\end{itemize}
\end{assumption}
\noindent
\begin{remark}
    It is clear, that for $\Omega \subset \mathbb{R}^d, d \leq 2$, the dictionary learning regularized problem in \eqref{eq:p0} is a specific case of the problem class presented in  \eqref{eq:p0_general} for the choice $h(u,z) = (1/2)\| P \mathbb{D}^\mathfrak{h} u - DC\|_F^2 $, $z = (D,C) \in \mathbb{R}^{K \times K} \times \mathbb{R}^{ K \times N}$ and $R(D,C) = \mathcal{I}_{O_K}(D) + \beta \|C\|_1$. The restriction on the space dimension is needed to ensure $H_0^1(\Omega) \hookrightarrow L^p(\Omega)$ compactly, cf. also  \autoref{theorem_properties_solution_operator}.
\end{remark}
\noindent
 Let us briefly investigate existence of solutions. 
\begin{theorem}[Existence of solutions for \eqref{eq:p0_general}]\label{theorem_existence_general}Let \autoref{assumptions_general} hold true. Then Problem \eqref{eq:p0_general} has a solution $(u^*,z^*) \in U_{ad} \times Z$.
\end{theorem}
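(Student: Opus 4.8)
The plan is to apply the direct method of the calculus of variations. First I would check that the problem is proper and bounded below. Each of the five summands in $J$ is bounded below: the fidelity term, the $H_0^1$-regularization term, and the indicator $\mathcal{I}_{U_{ad}}$ are nonnegative; $h$ is bounded below by (H1); and $R$, being proper, lower semicontinuous and coercive on the finite-dimensional space $Z$, has compact sublevel sets and hence attains its minimum and is bounded below. Therefore $J^\ast := \inf J > -\infty$. Taking any feasible pair (possible since $R$ is proper and, as a prerequisite, $U_{ad}\neq\emptyset$) shows $J^\ast < +\infty$, so the problem is proper. I then fix a minimizing sequence $(u_n,z_n)_n$ with $J(u_n,z_n)\to J^\ast$ and $J(u_n,z_n)\le J^\ast+1$.

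Next I would extract a convergent subsequence. Since all remaining terms are bounded below, the bound $J(u_n,z_n)\le J^\ast+1$ forces $\tfrac{\alpha}{2}\|\nabla u_n\|_{L^2(\Omega)}^2\le C$ and $R(z_n)\le C$. Using the equivalent inner product $\langle u,v\rangle_{H_0^1(\Omega)}=\langle\nabla u,\nabla v\rangle_{L^2(\Omega)}$, the first estimate gives $\|u_n\|_{H_0^1(\Omega)}\le C'$, so by reflexivity a subsequence (not relabeled) satisfies $u_n\rightharpoonup u^\ast$ weakly in $H_0^1(\Omega)$. By the compact embedding in (B1) we then have $u_n\to u^\ast$ strongly in $L^p(\Omega)$, and since $\Omega$ is bounded with $p>2$ also in $L^2(\Omega)$. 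The coercivity of $R$ together with $R(z_n)\le C$ shows $(z_n)$ is bounded in the finite-dimensional space $Z$, so a further subsequence converges strongly, $z_n\to z^\ast$.

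Then I would pass to the limit term by term. Since $F:L^p(\Omega)\to L^2(\Omega)$ is continuous by (B1) and $u_n\to u^\ast$ in $L^p(\Omega)$, we get $F(u_n)\to F(u^\ast)$ in $L^2(\Omega)$, so the fidelity term converges. The gradient seminorm is convex and strongly continuous, hence weakly lower semicontinuous, giving $\liminf_n \tfrac{\alpha}{2}\|\nabla u_n\|^2\ge\tfrac{\alpha}{2}\|\nabla u^\ast\|^2$. The set $U_{ad}$ is convex and strongly closed in $H_0^1(\Omega)$, hence weakly closed, so $u^\ast\in U_{ad}$ and $\mathcal{I}_{U_{ad}}(u^\ast)=0$ (equivalently, $L^p$-convergence yields a.e.\ convergence along a subsequence, preserving $a\le u^\ast\le b$). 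By (H1), $h$ is continuous on $L^2(\Omega)\times Z$, and with $u_n\to u^\ast$ in $L^2(\Omega)$ and $z_n\to z^\ast$ we obtain $h(u_n,z_n)\to h(u^\ast,z^\ast)$. Finally, $R$ is lower semicontinuous, so $\liminf_n R(z_n)\ge R(z^\ast)$. Combining these via superadditivity of $\liminf$ gives $J^\ast=\liminf_n J(u_n,z_n)\ge J(u^\ast,z^\ast)\ge J^\ast$, so $(u^\ast,z^\ast)\in U_{ad}\times Z$ is a minimizer.

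The crux is the behavior of the nonlinear fidelity term: for a general nonlinear $F$, weak convergence in $H_0^1(\Omega)$ alone does not control $\|F(u_n)-f^\delta\|^2$. This is precisely where (B1) is indispensable — the compact embedding $H_0^1(\Omega)\hookrightarrow L^p(\Omega)$ upgrades weak $H_0^1$-convergence to strong $L^p$-convergence, on which $F$ and $h$ act continuously. By contrast, the nonconvexity of $R$ permitted in (B5) poses no difficulty, since $Z$ is finite-dimensional: boundedness already yields strong convergence, and only lower semicontinuity of $R$ is needed. I would also note that the convexity assumption (H2) plays no role in the existence proof and is reserved for the later algorithmic analysis; the single genuine prerequisite is the feasibility of the constraints, i.e.\ $U_{ad}\neq\emptyset$, which I take as given.
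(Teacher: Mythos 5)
Your proposal is correct and follows essentially the same route as the paper's proof: the direct method with boundedness of the infimizing sequence via the Poincar\'e inequality and the coercivity of $R$, strong $L^p(\Omega)$ convergence via the compact embedding from (B1), and closedness/lower-semicontinuity arguments for each term. Your additional explicit verifications (boundedness below of $J$, term-by-term limit passage, and the remark that feasibility $U_{ad}\neq\emptyset$ must be taken as given) merely flesh out steps the paper compresses into ``classical lower semicontinuity arguments.''
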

\begin{proof}
We use the direct method in the calculus of variations. Let $(u_n, z_n)_n$ be an infimizing sequence, which exists since $J$ is bounded below. By the definition of $J$, the sequence $(\|\nabla u_n\|_{L^2(\Omega)})_n$ is bounded, and thus, by the Poincaré inequality \cite{attouch2014variational}, so is $(\|u_n\|_{H_0^1(\Omega)})_n$. The coercivity of $R$ implies boundedness of $(\|z_n\|_Z)_n$. Therefore, a subsequence (relabelled) converges weakly in $H^1(\Omega)$ and strongly in $L^p(\Omega)$ to some $(u^*, z^*)$ in $H^1_0(\Omega) \times Z$, due to the compact embedding $H_0^1(\Omega) \hookrightarrow L^p(\Omega)$ with the $p$ from \autoref{assumptions_general}. Since $U_{ad}$ is sequentially closed in the strong $L^p$-topology and $F: L^p(\Omega) \to L^2(\Omega)$ is continuous, classical lower semicontinuity arguments yield that $(u^*, z^*)$ is a global solution of \eqref{eq:p0_general}.
\end{proof}
\subsection{The algorithm and its global convergence analysis} Let us now consider the optimization scheme for finding stationary points of \eqref{eq:p0_general}. We assume the reader is familiar with basic concepts from variational analysis and stationarity. For a comprehensive treatment, we refer to the monographs \cite{mordukhovich2006variational,rockafellar2009variational}. For our purposes, the presentation in \cite{frankel2015splitting} will be sufficient.
\subsubsection{Description of the algorithm} As outlined in the introduction, we solve \eqref{eq:p0_general} by alternating between updates for the $z$-variable (\emph{$z$-step}) and the $u$-variable (\emph{$u$-step}). Both steps are detailed below.
\paragraph{Solution for the z-step}
Given the iterate $u_k \in H_0^1(\Omega)$, the goal of the $z$-step is to solve
\begin{equation}
\min_{z \in Z} g_k(z) := h(u_k,z) + R(z) \tag{$P_z^k$} \label{eq:P_z}
\end{equation}
up to limiting stationarity, cf. \cite{frankel2015splitting}. To this end, we assume access to an abstract algorithm specified by a sequence of transition maps $\mathcal{A}_k^n: Z \to Z$, defining the next iterate given the current one:
\begin{equation}
z_k^{n+1} = \mathcal{A}_k^n(z_k^n), \quad z_k^0 := z_k \in Z \notag
\end{equation}
for all $k \in \mathbb{N}$. We assume this algorithm generates a so-called \emph{descent sequence}, defined next.
\begin{definition}[Descent sequence]\label{def:descent_sequence} A descent sequence with parameters $(\sigma_1,\sigma_2) \in \mathbb{R}^2_{>0}$ for problem \eqref{eq:P_z} is a sequence of points $(z^n)_{n \in \mathbb{N}} \subset Z$ such that the following two inequalites hold true:
\begin{itemize}
    \item[(i)] (Sufficient descent of function values) There is a $\sigma_1 >0$  such that
        \begin{equation}
            g_k(z^{n+1}) \leq g_k(z^n) - \frac{\sigma_1}{2}\| z^{n+1} - z^n\|^2_Z  \quad \text{for all $n \in \mathbb{N}$}. \label{eq:descent_z}
        \end{equation}
    \item[(ii)](Gradient inequality) There is a $\sigma_2>0$, such that 
    \begin{equation}
        \dist(0, \partial g_k(z^{n+1})) \leq \sigma_2 \| z^{n+1} - z^{n}\|_Z \quad \text{for all $n \in \mathbb{N}$}.
        \label{eq:gradient_inequality_z}
    \end{equation}
\end{itemize}
\end{definition}
\noindent
\begin{remark}
The dictionary learning  \cref{alg:algorithm_dictionary} used to solve the subproblems generates descent sequences, as shown in \autoref{lem:properties_dict_learning_alg}. This property is common among many algorithms in the literature \cite{attouch2010proximal,attouch2013convergence,bolte2014proximal,bolte2016majorization}. In contrast, accelerated methods like FISTA exhibit non-monotone convergence and do not guarantee descent \cite{beck2017first}.
\end{remark}
\begin{assumption}[On the update algorithm in the $z$-step]\label{assumption_z_step}
We assume that the algorithm, defined by the update rule $\mathcal{A}_k^n: Z \to Z$, yields a descent sequence for the objective \eqref{eq:P_z} with parameters $(\sigma_1,\sigma_2) \in \mathbb{R}^2_{>0}$ that can be chosen independently of $k \in \mathbb{N}$.   
\end{assumption}
\noindent
Let us briefly formalize the algorithm for the $z$-step.
\begin{algorithm}[H]
\caption{Computation of a near stationary point for the $z$-step at the $k$-th outer loop iteration.}
\begin{algorithmic}[1]
\State Get $(u_k, z_k) \in H_{0}^1(\Omega) \times Z$, accuracy $\eta_k > 0$.
\State  Initialize with $z_k^0 = z_k$ and compute $z_k^1 = \mathcal{A}_k^0(z_k^0)$.
\State Set $n = 1$.
\While{$\|z_{k}^{n} - z_k^{n-1} \| \geq \eta_k $} 
	\State  Compute $z_k^{n+1} = \mathcal{A}_n(z_k^n) \in Z$. 
    \State Set $n = n+1$.
\EndWhile 
\State Set $z_{k+1} := z_k^{n_k}$ where $n_k \in \mathbf{N}$ is the first iterate such that 
$\|z_{k}^{n_k} - z_{k}^{n_k-1} \| < \eta_k$.  
\State Return $z_{k+1}=z_k^{n_k}$.
\end{algorithmic}
\end{algorithm}\label{alg:algorithm_z}
\noindent
  From the properties \eqref{eq:descent_z} and \eqref{eq:gradient_inequality_z} we directly infer the following lemma, which describes the fundamental behaviour of every descent sequence.
\begin{lemma} Let $(u_k, z_k) \in H_{0}^1(\Omega) \times Z$, and an accuracy $\eta_k > 0$ be given in \cref{alg:algorithm_z}. For the generated sequence the following properties hold:
\begin{itemize}
    \item[(i)] The sequence of function values converges monotonically to its infimum, i.e.
    \begin{equation}
        g_k(z_k^n) \searrow g^*_k := \inf_{n \in \mathbb{N}} g_k(z_k^n) \geq 0 \quad \text{as $n \to \infty$. }  \notag
    \end{equation}
    \item[(ii)] The following estimate holds true for every $k,N \in \mathbf{N}$
    \begin{equation}
        \sum_{n=1}^N \|z_k^{n} - z_k^{n-1} \|^2_Z \leq \frac{ 2(g_k(z_k^0) - g_k(z_k^{N}))}{\sigma_1} \leq \frac{ 2(g_k(z_k^0) - g_k^*)}{\sigma_1}. \notag
    \end{equation}
    In particular $\|z_k^{n} - z_k^{n-1} \|_Z \to 0$ and $\dist(0, \partial g_k(z_k^{n})) \to 0$ as $n \to \infty$ . 
    \item[(iii)] The following estimate holds for the lazy slope and every $k,N \in \mathbf{N}$
    \begin{equation}
        \min_{n=1, \ldots,N} \dist(0, \partial g_k(z_{k}^{n})) \leq \sqrt{ \frac{\sigma_2^2}{2\sigma_1} \left( \frac{g_k(z_k^0)-g_k(z_k^{N})}{N}\right)}. \notag
    \end{equation}
\end{itemize}
\end{lemma}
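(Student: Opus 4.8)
The plan is to derive all three claims directly from the two defining inequalities of a descent sequence in \autoref{def:descent_sequence}, namely the sufficient-descent estimate \eqref{eq:descent_z} and the gradient inequality \eqref{eq:gradient_inequality_z}; no property of the abstract transition maps $\mathcal{A}_k^n$ beyond these is needed. For part (i), I would first observe that \eqref{eq:descent_z} forces $g_k(z_k^{n+1}) \le g_k(z_k^n)$, so $(g_k(z_k^n))_n$ is monotonically non-increasing. Since $h$ is bounded below by (H1) and $R$ is coercive, proper and lower semicontinuous by (B5), the functional $g_k = h(u_k,\cdot) + R$ is bounded below; a monotone sequence bounded below converges to its infimum, giving $g_k(z_k^n) \searrow g_k^*$. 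Nonnegativity of $g_k^*$ is immediate in the dictionary-learning instance, where every summand of $g_k$ is nonnegative.

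For part (ii), the key step is a telescoping argument. Rearranging \eqref{eq:descent_z} as $\tfrac{\sigma_1}{2}\|z_k^{n}-z_k^{n-1}\|_Z^2 \le g_k(z_k^{n-1}) - g_k(z_k^{n})$ and summing over $n=1,\dots,N$ collapses the right-hand side to $g_k(z_k^0) - g_k(z_k^N)$, which yields the first inequality after multiplying by $2/\sigma_1$. The second inequality follows from $g_k(z_k^N) \ge g_k^*$, established in part (i). Letting $N \to \infty$ shows the series $\sum_n \|z_k^n - z_k^{n-1}\|_Z^2$ is finite, so its terms vanish, i.e.\ $\|z_k^n - z_k^{n-1}\|_Z \to 0$; feeding this into the gradient inequality \eqref{eq:gradient_inequality_z} gives $\dist(0,\partial g_k(z_k^n)) \le \sigma_2\|z_k^n - z_k^{n-1}\|_Z \to 0$.

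For part (iii), I would use the elementary fact that a minimum does not exceed the average. Squaring the gradient inequality gives $\dist(0,\partial g_k(z_k^n))^2 \le \sigma_2^2 \|z_k^n - z_k^{n-1}\|_Z^2$, and summing over $n=1,\dots,N$ together with the bound from part (ii) controls $\sum_{n=1}^N \dist(0,\partial g_k(z_k^n))^2$ by a constant multiple of $g_k(z_k^0) - g_k(z_k^N)$. Dividing by $N$ bounds the average, and hence the minimum, of the squared slopes; taking the square root then delivers the stated sublinear rate, the precise numerical constant being purely a bookkeeping matter in combining $\sigma_1$ and $\sigma_2$ (one should double-check the factor appearing under the root).

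There is essentially no deep obstacle here: the lemma is a soft consequence of the descent-sequence axioms, and the only points requiring genuine care are verifying that $g_k$ is bounded below—so that $g_k^*$ is finite and the telescoping bound is meaningful—and tracking the constants $\sigma_1,\sigma_2$ accurately through the averaging step in part (iii).
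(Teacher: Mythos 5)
Your derivation is correct in substance, and it is worth noting that the paper itself does not actually prove this lemma: its ``proof'' is a one-line citation to \cite{bolte2014proximal}. What you wrote out---monotonicity plus boundedness below for (i), telescoping \eqref{eq:descent_z} for (ii), and the minimum-does-not-exceed-the-average argument for (iii)---is precisely the standard argument behind that citation, so you have effectively supplied the self-contained proof the paper outsources. Two small points deserve to be made explicit. First, boundedness below of $g_k=h(u_k,\cdot)+R$ requires a short argument for the $R$-part: a proper, lower semicontinuous, coercive function on the finite-dimensional space $Z$ is bounded below (a sequence with $R(z_n)\to-\infty$ would be bounded by coercivity, and a subsequential limit would contradict lower semicontinuity and properness). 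Second, you are right that the asserted sign $g_k^*\geq 0$ does not follow from \autoref{assumptions_general} in general and only holds in instantiations such as the dictionary-learning one where both summands are nonnegative; this is an imprecision of the paper's statement, not of your proof.

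Your instinct to ``double-check the factor appearing under the root'' in (iii) is well placed: your bookkeeping gives the correct constant, and the constant printed in the lemma appears to be wrong. Combining \eqref{eq:gradient_inequality_z} with \eqref{eq:descent_z} gives
\begin{equation}
\dist(0,\partial g_k(z_k^{n}))^2 \leq \sigma_2^2\|z_k^{n}-z_k^{n-1}\|_Z^2 \leq \frac{2\sigma_2^2}{\sigma_1}\bigl(g_k(z_k^{n-1})-g_k(z_k^{n})\bigr), \notag
\end{equation}
and summing over $n=1,\dots,N$ and bounding the minimum by the average yields
\begin{equation}
\min_{n=1,\dots,N}\dist(0,\partial g_k(z_k^{n})) \leq \sqrt{\frac{2\sigma_2^2}{\sigma_1}\left(\frac{g_k(z_k^0)-g_k(z_k^{N})}{N}\right)}, \notag
\end{equation}
i.e.\ the constant is $2\sigma_2^2/\sigma_1$, not $\sigma_2^2/(2\sigma_1)$. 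The smaller constant claimed in the lemma cannot be derived from the descent-sequence axioms alone: take $Z=\mathbb{R}$, $g_k(z)=\tfrac{\sigma_1}{2}z^2$, $\sigma_2=\sigma_1$, $z_k^0=t>0$, $z_k^1=t/2$, and $z_k^n=0$ for $n\geq 2$; both \eqref{eq:descent_z} and \eqref{eq:gradient_inequality_z} hold, yet for $N=1$ the left-hand side equals $\sigma_1 t/2$ while the lemma's right-hand side equals $\sqrt{3}\,\sigma_1 t/4<\sigma_1 t/2$. So the $O(N^{-1/2})$ rate is correct but the numerical factor in the statement is off by a factor of $2$ outside the root; since the only downstream uses (e.g.\ \autoref{thm_global_convergence}(ii) and \autoref{remark_complexity_z_algorithm}) absorb this into a generic constant, nothing else in the paper is affected.
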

\begin{proof}
A proof can for instance be found in \cite{bolte2014proximal}.
\end{proof}
\noindent
\begin{remark}[Complexity of \cref{alg:algorithm_z}]\label{remark_complexity_z_algorithm}
Based on the above inequalities, we may also derive that
\begin{equation}
    \min_{n=1,\ldots,N}  \|z_k^{n} - z_k^{n-1} \|^2_Z \leq  \frac{ 2(g(z_k^0)-g(z_k^N))}{ \sigma_1 N}. \notag
\end{equation}
Consequently, if a tolerance $\eta_k>0$ is given, then this bound implies that 
$\|z_k^{n} - z_k^{n-1} \|_Z \leq \eta_k$ after at most $N(\eta_k)$-many steps, where
\begin{equation}
N(\eta_k) = \frac{2(g(z_k^0)-g(z_k^{n_k}))}{\sigma_1 \eta_k^2}. \notag
\end{equation} 
For example, for $\eta_k = k^{-\gamma}$ with some given $\gamma>0$ the stopping index $n_k$ satisfies 
\begin{equation}
n_k \leq \frac{2(g_k(z_k)-g_k(z_{k+1}))}{\sigma_1} k^{2\gamma} \leq \frac{2(J(u_k, z_k)-J(u_{k+1},z_{k+1}))}{\sigma_1} k^{2\gamma}.
\end{equation}
\end{remark}
\paragraph{Solution for the $u$-step.}
We update the physical parameter of interest, $u \in H_0^1(\Omega)$, by performing exactly one Levenberg-Marquardt step, i.e., for a given iterate \( (u_k, z_{k+1}) \in H_0^1(\Omega) \times Z \), we consider the  optimization problem:
\begin{equation}
    u_{k+1} =  \argmin_{u \in U_{ad}} g_{\lambda_k}(u,u_k) + \frac{\alpha}{2} \| \nabla u \|_{L^2(\Omega)}^2  +  h(u,z). \label{eq:P_u}
    \tag{$P_u$}
\end{equation}
Here, $g_{\lambda_k}(u,v)$ is the following \emph{model function} that approximates $f$ locally around $v$:
\begin{align}
    g_{\lambda_k}(u,v) 
    &:= 
    g(u,v) + \frac{\lambda_k}{2} \|u - v \|^2_{H_0^1(\Omega
    )} \notag \\
    &:= \frac{1}{2} \|F'(u_k)[u - u_k] + F(u_k) - f^\delta  \|^2_{L^2(\Omega)} 
    + \frac{\lambda_k}{2} \|u - v \|^2_{H_0^1(\Omega
    )}. \label{eq:def_model_function_u}
\end{align}  
For later use, we further define the \emph{approximation error} $$e_{\lambda_k}(u,v) := g_{\lambda_k}(u,v) - f(u).$$ Let us first ensure that the problem \eqref{eq:P_u} has a solution.
\begin{theorem}[Existence of solutions for the subproblems] Given $z_{k+1} \in Z$,  $u_k \in H_0^1(\Omega)$ and $\lambda_k>0$, the problem \eqref{eq:P_u} has a unique solution.
\end{theorem}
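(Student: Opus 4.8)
The plan is to establish existence and uniqueness by showing that the objective in \eqref{eq:P_u} is proper, coercive, and strictly convex over the weakly closed admissible set $U_{ad}$, and then invoke the direct method together with strict convexity. Writing the objective as
\begin{equation}
\Phi(u) := g_{\lambda_k}(u,u_k) + \frac{\alpha}{2}\|\nabla u\|_{L^2(\Omega)}^2 + h(u,z_{k+1}), \notag
\end{equation}
I first observe that $\Phi$ is finite on $U_{ad}$ (using $\|F(u_k)\|_{L^2}\leq M_1$, $\|F'(u_k)\|_{\mathcal{L}}\leq M_2$ from (B2), and (H1) for finiteness of $h$), so the problem is proper. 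For coercivity, the decisive term is $\tfrac{\alpha}{2}\|\nabla u\|_{L^2}^2 = \tfrac{\alpha}{2}\|u\|_{H_0^1}^2$, which dominates as $\|u\|_{H_0^1}\to\infty$; the Levenberg–Marquardt term $\tfrac{\lambda_k}{2}\|u-u_k\|_{H_0^1}^2$ only reinforces this, the linearized fidelity term $g(u,u_k)$ is nonnegative, and $h(\cdot,z_{k+1})$ is bounded below by (H1). Hence $\Phi(u)\to+\infty$ as $\|u\|_{H_0^1}\to\infty$, and any minimizing sequence is bounded in $H_0^1(\Omega)$.

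Next I would extract a weakly convergent subsequence $u_n\rightharpoonup u^*$ in $H_0^1(\Omega)$; by the compact embedding in (B1), $u_n\to u^*$ strongly in $L^p(\Omega)$, so $u^*\in U_{ad}$ since $U_{ad}$ is closed under the $L^p$-limit (the pointwise box constraints $a\leq u\leq b$ pass to the a.e. limit). Lower semicontinuity of $\Phi$ then follows term by term: the two squared-norm terms in $H_0^1$ are convex and continuous, hence weakly lower semicontinuous; the linearized fidelity $g(u,u_k)$ is convex and continuous in $u$ (it is a squared norm of an affine map of $u$ composed with the bounded operator $F'(u_k)$), hence weakly lower semicontinuous; and $h(\cdot,z_{k+1})$ is continuous on $L^2(\Omega)$ by (H1), so $h(u_n,z_{k+1})\to h(u^*,z_{k+1})$ along the strongly $L^p$-convergent subsequence. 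Combining these yields $\Phi(u^*)\leq\liminf_n\Phi(u_n)$, so $u^*$ is a minimizer.

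For uniqueness I would argue strict convexity of $\Phi$. The term $\tfrac{\alpha}{2}\|\nabla u\|_{L^2}^2$ is strictly convex in $u$ with respect to the $H_0^1$-inner product (it equals $\tfrac{\alpha}{2}\|u\|_{H_0^1}^2$), while $g_{\lambda_k}(\cdot,u_k)$ is convex and $h(\cdot,z_{k+1})$ is convex by (H2); the sum of a strictly convex function and convex functions is strictly convex, and a strictly convex function admits at most one minimizer over the convex set $U_{ad}$. The main obstacle I anticipate is the weak lower semicontinuity of the fidelity-type terms: one must be careful that $g(u,u_k)$ involves $F'(u_k)$ evaluated at the \emph{fixed} previous iterate $u_k$ (not at the varying $u$), so it is genuinely a convex quadratic in $u$ and no nonlinearity of $F$ enters the subproblem—this is precisely what makes the Levenberg–Marquardt model function tractable. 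Once this is noted, the remaining steps are routine applications of the direct method.
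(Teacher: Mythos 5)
Your proof is correct and follows essentially the same route as the paper: existence via the direct method exactly as in the proof of \autoref{theorem_existence_general}, and uniqueness from convexity of the subproblem. The only cosmetic difference is that the paper attributes uniqueness to the $\lambda_k$-strong convexity of the proximal term $\frac{\lambda_k}{2}\|u-u_k\|^2_{H_0^1(\Omega)}$, whereas you invoke strict convexity of $\frac{\alpha}{2}\|\nabla u\|^2_{L^2(\Omega)}$; both terms are strongly convex in the $H_0^1$-norm, so either argument suffices.
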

\begin{proof}
The proof follows similar arguments as the proof of \autoref{theorem_existence_general}. The uniqueness is a consequence of the $\lambda_k$-strong convexity of $g_{\lambda_k}(\cdot,u_k)$ for every $k \in \mathbb{N}$.
\end{proof}
\noindent
Next we establish some classical estimates for the model function defined in \eqref{eq:def_model_function_u}.
\begin{theorem}[Fundamental inequalities for the model function]\label{fundamental_inequality}
Let  \autoref{assumptions_general} hold true. Then there are $L_1, L_2>0$ such that the model function $g(\cdot,\cdot)$ as defined in \eqref{eq:def_model_function_u} satisfies
\begin{align}
    \| \nabla f(u) - \nabla f(v) \|_{H^{-1}(\Omega)} &\leq L_{1} \| u - v \|_{H_0^1(\Omega)}, \label{eq:fund_inequality_grad_ineq}\\
    |g(u,v) - f(u)| &\leq \frac{L_{2}}{2} \|u - v \|_{H_0^1(\Omega)}^2, \label{eq:fund_inequality_approx_inequa}
 \end{align}
 for every pair $u,v \in U_{ad}$. 
\end{theorem}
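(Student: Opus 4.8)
The plan is to establish the two estimates separately, both resting on three ingredients supplied by \autoref{assumptions_general}: the uniform bounds (B2) on $F$, $F'$, $F''$ over $U_{ad}$; the convexity of $U_{ad}$, so that the segment $\{v+t(u-v):t\in[0,1]\}$ between any two admissible points remains admissible and the bounds of (B2) are valid along it; and the continuous embedding $H_0^1(\Omega)\hookrightarrow L^p(\Omega)$ from (B1), with embedding constant $C_e$. Since $f(u)=\tfrac12\|F(u)-f^\delta\|_{L^2}^2$, a chain-rule computation identifies the gradient, viewed as an element of $H^{-1}(\Omega)$, through the pairing $\langle \nabla f(u),h\rangle = \langle F(u)-f^\delta, F'(u)[h]\rangle_{L^2}$ for $h\in H_0^1(\Omega)$.

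For \eqref{eq:fund_inequality_grad_ineq} I would form $\langle \nabla f(u)-\nabla f(v),h\rangle$ and split it by adding and subtracting $\langle F(u)-f^\delta, F'(v)[h]\rangle$, obtaining $\langle F(u)-f^\delta,(F'(u)-F'(v))[h]\rangle + \langle F(u)-F(v),F'(v)[h]\rangle$. The first term is controlled by $\|F(u)-f^\delta\|_{L^2}\leq M_1+\|f^\delta\|_{L^2}$ together with the Lipschitz bound $\|F'(u)-F'(v)\|_{\mathcal L}\leq M_3\|u-v\|_{L^p}$, which follows from the fundamental theorem of calculus and the $M_3$-bound on $F''$; the second uses $\|F(u)-F(v)\|_{L^2}\leq M_2\|u-v\|_{L^p}$ and $\|F'(v)[h]\|_{L^2}\leq M_2\|h\|_{L^p}$. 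Passing from $L^p$ to $H_0^1$ norms via $C_e$ and taking the supremum over $\|h\|_{H_0^1}\leq 1$ yields the claim with an explicit $L_1$ depending on $C_e,M_1,M_2,M_3$ and $\|f^\delta\|_{L^2}$.

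For \eqref{eq:fund_inequality_approx_inequa} I would Taylor-expand $F$ around $v$ with integral remainder, $F(u)=F(v)+F'(v)[u-v]+r(u,v)$, and bound $\|r(u,v)\|_{L^2}\leq \tfrac{M_3}{2}\|u-v\|_{L^p}^2$ using the $M_3$-bound on $F''$ along the admissible segment. Setting $a:=F'(v)[u-v]+F(v)-f^\delta$ and $b:=F(u)-f^\delta$, one has $g(u,v)-f(u)=\tfrac12(\|a\|^2-\|b\|^2)=\tfrac12\langle a-b,a+b\rangle$ with $a-b=-r(u,v)$, so $|g(u,v)-f(u)|\leq \tfrac12\|r(u,v)\|_{L^2}\,\|a+b\|_{L^2}$. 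The remainder bound together with the embedding produces the desired factor $\|u-v\|_{H_0^1}^2$, leaving only the task of bounding $\|a+b\|_{L^2}$ by a constant.

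This last step is the point requiring care, and I expect it to be the main obstacle: $U_{ad}$ is bounded in $L^\infty$ (hence $L^p$) but \emph{not} in $H_0^1$, so I must prevent the ``sum'' factor $\|a+b\|$ from carrying an extra power of $\|u-v\|_{H_0^1}$, which would break the pure quadratic estimate. The resolution is that $\|u-v\|_{L^p}$ is itself uniformly bounded over $U_{ad}$ by the constraints $a\leq u,v\leq b$, so $\|F'(v)[u-v]\|_{L^2}\leq M_2\|u-v\|_{L^p}$ is bounded by a constant; combined with $\|F(u)-f^\delta\|,\|F(v)-f^\delta\|\leq M_1+\|f^\delta\|_{L^2}$ this bounds $\|a+b\|_{L^2}$ uniformly, and the estimate follows with $L_2$ absorbing these constants. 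Throughout, convexity of $U_{ad}$ is precisely what legitimizes invoking (B2) along the integration segment in both the mean-value and Taylor arguments.
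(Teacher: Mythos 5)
Your proposal is correct and follows essentially the same route as the paper's proof: the same add-and-subtract decomposition of $\langle \nabla f(u)-\nabla f(v),h\rangle$ combined with the (B2) bounds, mean-value/Taylor arguments along the segment in $U_{ad}$, and the embedding $H_0^1(\Omega)\hookrightarrow L^p(\Omega)$ for the gradient estimate; and the same Taylor-remainder argument for the approximation estimate, where the potentially problematic higher-order factor is neutralized exactly as in the paper by the uniform $L^\infty$ (hence $L^p$) boundedness of $U_{ad}$. Your explicit remark that convexity of $U_{ad}$ legitimizes invoking (B2) along the integration segment, and your clean factorization $\tfrac12(\|a\|^2-\|b\|^2)=\tfrac12\langle a-b,a+b\rangle$, are minor presentational improvements over the paper's version of the same argument.
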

\begin{proof}
    We prove \eqref{eq:fund_inequality_grad_ineq} first. For this purpose let $u,v \in U_{ad}$. Chainrule yields 
    \begin{equation}
    \nabla f(u) = F'(u)^*(F(u) - f^\delta) \in H^{-1}(\Omega). \notag
    \end{equation}
    By definition, we obtain for $h \in H_0^1(\Omega)$ and $u,v \in U_{ad}$
    \begin{align}
        | \langle \nabla f(u) - \nabla f(v), h \rangle_{H^{-1}(\Omega)}| 
        &=
        |\langle F(u) - f^\delta, F'(u)[h] \rangle_{L^2(\Omega)} - \langle F(v) - f^\delta, F'(v)[h] \rangle_{L^2(\Omega)} | \notag\\
        &\leq (\| F(u)\|_{L^2(\Omega)} + \|f^\delta \|_{L^2(\Omega)})\| (F'(u) - F'(v))[h]\|_{L^2(\Omega)}  \notag\\
        &\quad + \| F(u) - F(v) \|_{L^2(\Omega)} \| F'(u)[h] \|_{L^2(\Omega)} . \label{eq:fund_ineq_1}
    \end{align}
    By \eqref{eq:bounded_derivatives_ass} we see that $\| F(u)\|_{L^2(\Omega)} + \|f^\delta \|_{L^2(\Omega)} \leq C_1$ for some $C_1>0$ independent of $u$. Moreover, again by \eqref{eq:bounded_derivatives_ass} we obtain Lipschitz continuity of $F:L^p(\Omega) \to L^2(\Omega)$ and $F': L^p(\Omega) \to L^2(\Omega)$ using classical mean-value type theorems. Hence we deduce 
    \begin{equation}
        \| F(u) - F(v) \|_{L^2(\Omega)} \leq  C_2 \|u - v \|_{H^1_0(\Omega)} \quad \text{and} \quad 
        \| F'(u) - F'(v)[h] \|_{L^2(\Omega)} 
        \leq C_2
          \| u - v \|_{H_0^1(\Omega)} \|h \|_{H_0^1(\Omega)},
        \notag
    \end{equation}
    for some generic $C_2>0$, which also involves  embedding constants.  Again by \eqref{eq:bounded_derivatives_ass}, we bound $\| F'(u)[h] \|_{L^2(\Omega)} 
    \leq M_1 \|h\|_{H_0^1(\Omega)}$.  Plugging these inequalities into \eqref{eq:fund_ineq_1}, we directly infer \eqref{eq:fund_inequality_grad_ineq}. \\[0.2cm]
    Let us now prove \eqref{eq:fund_inequality_approx_inequa}. For this purpose, we let $w_1,w_2 \in L^2(\Omega)$ be arbitrary and observe that 
    \begin{align*}
        \left| \frac{1}{2} \|w_1 -f^\delta \|^2_{L^2(\Omega)}- \frac{1}{2} \|w_2 - f^\delta \|^2_{L^2(\Omega)} \right| \leq 
        \left| \langle w_2- f^\delta , w_1 - w_2\rangle_{L^2(\Omega)} \right| + \frac{1}{2} \|w_1 - w_2 \|^2_{L^2(\Omega)} .
    \end{align*}
    Taking $w_2 = F(v) \in L^2(\Omega)$ and $w_1 = F'(u)[v-u] + F(u) \in L^2(\Omega)$ we deduce 
    \begin{align*}
        \left| \| F(u) +  F'(u)[v-u] - f^\delta \|_{L^2(\Omega)}^2 - \|F(v)-f^\delta\|_{L^2(\Omega)}^2 \right|
        &\leq 
        2\|F(v)-f^\delta\|_{L^2(\Omega)} B + B^2 ,
    \end{align*}
    with $B := \|F(v) - F(u) -  F'(u)[v-u] \|_{L^2(\Omega)}$. As in the first part, we use \eqref{eq:bounded_derivatives_ass} and mean value theorem to obtain
    $B \leq C_3 \| u - v\|^2_{L^p(\Omega)}$ for some $C_3>0$ and the $p>2$ from \autoref{assumptions_general}. Consequently we infer  
\begin{align}
  \left| \| F(u) +  F'(u)[v-u] -f^\delta \|_{L^2(\Omega)}^2 - \|F(v)-f^\delta\|_{L^2(\Omega)} \right| \notag
  &\leq
   2C_3 \|F(v)-f^\delta\|_{L^2(\Omega)}  \|v - u\|_{L^p(\Omega)}^2 \\
  & \quad + C_3^2 \| v - u\|_{L^p(\Omega)}^4 .\label{eq:theorem_fundamental_inequality_ineq_proof}
\end{align}
Note that again $\|u-v\|_{L^p(\Omega)} \leq C_5 (\|v\|_{L^\infty(\Omega)}+\|u\|_{L^\infty(\Omega)}) \leq 2C_5b $ for some constant $C_5>0$ and that $\| F(v) - f^\delta\|_{L^2(\Omega)} \leq C_6$ for $v \in U_{ad}$ by invoking \eqref{eq:bounded_derivatives_ass}. Combining these observations with the continuous embedding $H_0^1(\Omega) \hookrightarrow L^p(\Omega)$ in \eqref{eq:theorem_fundamental_inequality_ineq_proof}, we obtain a constant $C>0$ with
\begin{align*}
  \left| \| F(u) + F'(u)[v-u] -f^\delta \|_{L^2(\Omega)}^2 - \|F(v)-f^\delta\|_{L^2(\Omega)} \right|
  &\leq
  C \|v - u\|_{H_0^1(\Omega)}^2.
\end{align*}
Thus the proof is finished.
\end{proof}
\noindent
From \autoref{fundamental_inequality} we directly obtain the following majorization property:
\begin{equation}
    f(u) \leq g(u,u_k) +\frac{L_{2 }}{2} \|u - u_k \|^2_{H_0^1(\Omega)} \quad \text{for all $u \in U_{ad}$}. \notag
\end{equation}
Given a positive descent constant $C_{\mathrm{desc}}>0$, we may chose step-size $\lambda_k \geq L_2 + C_{\mathrm{desc}}$ and deduce
\begin{align}
    J(u_{k+1},z_{k+1}) 
    &\leq 
    J(u_k,z_{k+1}) + \frac{L_2- \lambda_k}{2} \| u_k - u_{k+1} \|_{H_0^1(\Omega)}^2 \notag 
    \leq 
    J(u_k,z_{k+1}) - \frac{C_\mathrm{desc}}{2} \| u_k - u_{k+1} \|_{H_0^1(\Omega)}^2,
    \label{eq:descent_u_remark}
\end{align}
where we also used the minimization property of $u_{k+1} \in U_{ad}$.
\paragraph{The overall algorithm.} Let us now describe  the overall algorithm to approach stationary points of Problem \eqref{eq:p0_general}.
\begin{algorithm}[H]
\caption{Computation of a stationary point of \eqref{eq:p0_general}.}
\begin{algorithmic}[1]
\State Choose initial values $(u_0, z_0) \in H_{0}^1(\Omega) \times Z$, stopping tolerance $\varepsilon_1, \varepsilon_2 > 0$, accuracies of the nested subroutine $(\eta_k)_{k \in \mathbb{N}}$ and descent parameter $C_{\mathrm{decs}}>0$. 
\State Set k = 0.
\While{$\| u_{k} - u_{k-1} \|_{H_0^1(\Omega)} \geq  \varepsilon_1 $ or $\| z_{k} - z_{k-1} \|_{Z} \geq \varepsilon_2$}
	\State \textbf{z-step}: Given $(u_k,z_k) \in H_0^1(\Omega) \times Z$ and accuracy $\eta_k>0$, compute $z_{k+1}\in Z$ 
    \State using the abstract descent \cref{alg:algorithm_z}. 
    \State \textbf{u-step}: Compute a global solution $\widehat{u}(\lambda_k) \in U_{ad}$ of 
    \begin{equation}
        \min_{u \in U_{ad}} g_{\lambda_k}(u,u_k) + \frac{\alpha}{2} \| \nabla u \|_{L^2(\Omega)}^2 +  h(u,z), \label{eq:gen_algorithm_p_u}
    \end{equation}
    \State that satisfies the sufficient descent condition
    \begin{equation}
        J(\widehat{u}(\lambda_k),z_{k+1}) \leq J(u_k,z_{k+1}) - \frac{C_{\mathrm{desc}}}{2}\|\widehat{u}(\lambda_k) - u_k\|_{H_0^1(\Omega)}^2. \label{eq:alg_u_descent}
    \end{equation}
    \State Set $u_{k+1} = \widehat{u}(\lambda_k)$. 
    \State Set $k \leftarrow k+1$.
\EndWhile 
\State Return $(u_{k}, z_{k})$.
\end{algorithmic}
\end{algorithm}\label{alg:algorithm_general}
\noindent
The algorithm accepts the next iterate $\hat{u}(\lambda_k) \in U_{ad}$ in the $u$-step \eqref{eq:gen_algorithm_p_u} only if the descent condition \eqref{eq:alg_u_descent} is satisfied. If $L_2 > 0$ from \autoref{fundamental_inequality} is known beforehand, one can choose $\lambda_k \geq L_2 + C_{\mathrm{desc}}$ to guarantee this condition. However, since this constant is rarely known in practice, we rely on an adaptive backtracking strategy to determine a suitable $\lambda_k$, as described in the following algorithm.
\begin{algorithm}[H]
\caption{Backtracking search for \cref{alg:algorithm_general}}
\label{alg:backtracking1}
\begin{algorithmic}[1]
\State Get $\sigma_{BT} \in (0,1), \tau > 1$ and  $\lambda_0>0$ such that $C_{\mathrm{desc}} = \sigma_{BT}\lambda_0$.
\State Set $\lambda=\lambda_0$ and solve \eqref{eq:gen_algorithm_p_u} with parameter $\lambda>0$ to obtain $u(\lambda)$. 
\While{ $J(u(\lambda),z_{k+1}) > J(u_k,z_{k+1})
 - 	\frac{\lambda \sigma_{BT} }{2} \| u(\lambda) - u_k \|_{H_0^1(\Omega)}^2$} 
	\State  Set $\lambda \gets \tau \lambda $.
	\State Recompute $u = u(\lambda)$ by solving \eqref{eq:gen_algorithm_p_u} with the new $\lambda > 0$.
\EndWhile 
\State Return $\widehat{u} = u(\lambda)$ and step size $\lambda_k := \lambda$.
\end{algorithmic}
\end{algorithm}
\noindent
We argue that the line search strategy terminates after finitely many iterations.
\begin{lemma}Consider the \cref{alg:algorithm_general} above together with the line search strategy \cref{alg:backtracking1} at the $k$-th iterate. Given $u_k$ and the line search parameters $(\sigma_{BT},\lambda_0)$, the iterates of the backtracking algorithm converge after at most $j_* = \log_\tau(L_2/(1-\sigma_{BT})\lambda_0)$ iterations. In particular the sequence of step sizes satisfies $C_{\mathrm{desc}} := \sigma_{BT} \lambda_0 \leq \lambda_k \leq \lambda_0 \tau^{j_*}$.
\end{lemma}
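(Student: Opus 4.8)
The plan is to show that the inner loop of \cref{alg:backtracking1} can only keep rejecting trial step sizes as long as $\lambda$ stays below an explicit threshold governed by $L_2$ and $\sigma_{BT}$; since $\lambda$ grows by the fixed factor $\tau>1$ at each rejection, this threshold is crossed after finitely many steps, yielding the stated bound $j_*$ and the two-sided estimate on $\lambda_k$.

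First I would establish, for an \emph{arbitrary} trial value $\lambda$ encountered in the loop, a sharp one-step descent estimate. Let $u(\lambda)\in U_{ad}$ be the global minimizer of \eqref{eq:gen_algorithm_p_u} for that $\lambda$. Testing optimality of $u(\lambda)$ against the feasible competitor $u_k\in U_{ad}$ and using $g_{\lambda}(u_k,u_k)=g(u_k,u_k)=f(u_k)$, I obtain
\[
g(u(\lambda),u_k) + \frac{\lambda}{2}\|u(\lambda)-u_k\|^2_{H_0^1(\Omega)} + \frac{\alpha}{2}\|\nabla u(\lambda)\|^2_{L^2(\Omega)} + h(u(\lambda),z_{k+1}) \le f^\alpha(u_k)+h(u_k,z_{k+1}).
\]
Invoking the majorization property $f(u(\lambda)) \le g(u(\lambda),u_k)+\tfrac{L_2}{2}\|u(\lambda)-u_k\|^2_{H_0^1(\Omega)}$ that follows from \autoref{fundamental_inequality} (in particular \eqref{eq:fund_inequality_approx_inequa}), and adding $R(z_{k+1})$ to both sides, this rearranges to the sufficient-decrease inequality already recorded in \eqref{eq:descent_u_remark}, namely
\[
J(u(\lambda),z_{k+1}) \le J(u_k,z_{k+1}) - \frac{\lambda - L_2}{2}\,\|u(\lambda)-u_k\|^2_{H_0^1(\Omega)}.
\]

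Second, I would compare the achieved decrease coefficient $\tfrac{\lambda-L_2}{2}$ with the one demanded by the backtracking test, $\tfrac{\lambda\sigma_{BT}}{2}$. The achieved coefficient dominates the required one exactly when $\lambda-L_2\ge\lambda\sigma_{BT}$, i.e.\ $\lambda\ge \frac{L_2}{1-\sigma_{BT}}$ (using $\sigma_{BT}\in(0,1)$). Hence every trial step size at least $\frac{L_2}{1-\sigma_{BT}}$ passes the test, so the loop can reject only values strictly below this threshold. Since the search starts at $\lambda_0$ and multiplies by $\tau$, after $j$ rejections the candidate is $\lambda_0\tau^{j}$, and acceptance is guaranteed as soon as $\lambda_0\tau^{j}\ge\frac{L_2}{1-\sigma_{BT}}$, i.e.\ $j\ge \log_\tau\!\left(\frac{L_2}{(1-\sigma_{BT})\lambda_0}\right)=j_*$. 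Thus the loop terminates after at most $j_*$ iterations, the returned step obeys $\lambda_0\le\lambda_k\le\lambda_0\tau^{j_*}$, and since $\sigma_{BT}<1$ the lower bound gives $C_{\mathrm{desc}}=\sigma_{BT}\lambda_0\le\lambda_0\le\lambda_k$.

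The only genuinely substantive step is the first one: deriving the descent estimate with the sharp coefficient $(\lambda-L_2)/2$, which packages together the global minimality of $u(\lambda)$ and the quadratic majorization from \autoref{fundamental_inequality}; everything afterward is a geometric counting argument. The one point worth flagging is that the minimizer $u(\lambda)$ is recomputed whenever $\lambda$ is updated, so the descent estimate must hold \emph{uniformly} in $\lambda$. This causes no difficulty, however, since the estimate uses only the minimality of $u(\lambda)$ with respect to its own $\lambda$ together with the $\lambda$-independent constant $L_2$, so the threshold $\frac{L_2}{1-\sigma_{BT}}$ is the same for every trial and the counting argument goes through verbatim.
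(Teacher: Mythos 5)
Your proposal is correct and follows essentially the same route as the paper: both derive the sufficient-decrease inequality $J(u(\lambda),z_{k+1}) \le J(u_k,z_{k+1}) - \tfrac{\lambda - L_2}{2}\|u(\lambda)-u_k\|^2_{H_0^1(\Omega)}$ from the global minimality of $u(\lambda)$ combined with the quadratic majorization of \autoref{fundamental_inequality}, then observe that the backtracking test is passed whenever $\sigma_{BT}\lambda \le \lambda - L_2$, and finish with the geometric counting $\lambda_0\tau^j \ge L_2/(1-\sigma_{BT})$. The only difference is presentational: you spell out the optimality-plus-majorization derivation (which the paper delegates to the remark surrounding \eqref{eq:descent_u_remark}) and explicitly flag the uniformity of the threshold in $\lambda$, which is a fine point the paper leaves implicit.
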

\begin{proof} The proof is standard but we provide it here for the convenience of the reader. Given a current step size parameter $\lambda>0$ consider the unique solution $u(\lambda) \in H_0^1(\Omega)$ of \eqref{eq:gen_algorithm_p_u}. Using \autoref{fundamental_inequality}, we directly obtain
\begin{align*}
    J(u(\lambda),z_{k+1}) 
    \leq 
    J(u_k,z_{k+1}) - \left(\frac{\lambda-L_2}{2} \right)\| u(\lambda) - u_{k} \|_{H_0^1(\Omega)}^2 
\end{align*}
with the constant $L_2>0$ from \autoref{fundamental_inequality}. Hence, whenever $\sigma_{BT} \lambda \leq (\lambda - L_2)$ we also have
\begin{equation}
    J(u(\lambda),z_{k+1}) 
    \leq 
    J(u_k,z_{k+1}) - \left(\frac{\sigma_{BT} \lambda}{2} \right)\| u(\lambda) - u_{k} \|_{H_0^1(\Omega)}^2, \notag
\end{equation}
which is the condition for acceptance in the backtracking algorithm above. Thus when initialized with $\lambda_0$ then the line-search stops if $\sigma_{BT}
\tau^j \lambda_0 \leq (\tau ^j\lambda_0 - L_2)$ which happens after at most $j^* := \log_\tau(L_2/(1-\sigma_{BT})\lambda_0)$ steps.
\end{proof}
\noindent
Next we state a basic global convergence result for \cref{alg:algorithm_general} guaranteeing global sublinear rate.
\noindent
\begin{theorem}[Global convergence to stationarity]\label{thm_global_convergence}
Let \autoref{assumptions_general} and  \autoref{assumption_z_step} hold true and let $(u_k,z_k)_{k \in \mathbb{N}} \subset H_{0}^1(\Omega) \times Z$ be a sequence that is generated by \cref{alg:algorithm_general} with step size parameters with $\lambda_k - L_2 \geq C_{\mathrm{desc}} >0$ for $k \in \mathbb{N}$. In addition assume that the sequence of accuracy parameters are square summable, i.e. $\sum_k \eta_k^2 < + \infty $. Then the following statements hold:
\begin{itemize}
    \item[(i)] The sequence $(u_k,z_k)_{k\in \mathbb{N}}$ is bounded in $H_0^1(\Omega) \times Z$.
    \item[(ii)] The function values converge monotonically to its infimum and the slope converges at a globally sub-linear rate, i.e., there is a constant $C>0$ such that
    \begin{equation}
    \min_{k=1,\ldots,N}  
    \dist(0,\partial J(u_{k},z_{k}))
    \leq
     C \sqrt{ \frac{\left( J(u_0,z_0) - J(u_{N},z_{N}) \right) + \sum_{k=0}^\infty \eta_k^2 }{N}}, \notag
    \end{equation}
    \item[(iii)]If the sequence \((u_k)_{k \in \mathbb{N}}\) is bounded in \(H^2(\Omega)\), then any limit point \((u^*, z^*)\) at which the functional \(J\) is continuous is a stationary point.
\end{itemize}
\end{theorem}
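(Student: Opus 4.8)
The plan is to exploit the monotone decrease of $J$ along the outer iterations, which holds despite the absence of a sufficient-decrease estimate in the $z$-endpoint. First I would verify this decrease: the $z$-step produces a descent sequence, so by \eqref{eq:descent_z} telescoped over the inner loop one has $g_k(z_{k+1}) \le g_k(z_k)$, whence $J(u_k,z_{k+1}) \le J(u_k,z_k)$; the $u$-step then yields $J(u_{k+1},z_{k+1}) \le J(u_k,z_{k+1}) - \tfrac{C_{\mathrm{desc}}}{2}\|u_{k+1}-u_k\|_{H_0^1(\Omega)}^2$ by \eqref{eq:alg_u_descent}. Chaining gives $J(u_{k+1},z_{k+1}) \le J(u_k,z_k)$ and, by summation, both $\sum_k \|u_{k+1}-u_k\|_{H_0^1(\Omega)}^2 < \infty$ and $J(u_k,z_k)\le J(u_0,z_0)$. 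For (i), I would bound each term from below: $f^\alpha \ge \tfrac{\alpha}{2}\|\nabla u\|_{L^2(\Omega)}^2 \ge 0$, $\mathcal{I}_{U_{ad}}(u_k)=0$, $h$ is bounded below by (H1), and $R$ is bounded below since it is proper, l.s.c.\ and coercive on the finite-dimensional space $Z$. Rearranging $J(u_k,z_k)\le J(u_0,z_0)$ then bounds $\|\nabla u_k\|_{L^2(\Omega)}$ (hence $\|u_k\|_{H_0^1(\Omega)}$ via Poincaré) and $R(z_k)$ from above, and coercivity of $R$ turns the latter into a bound on $\|z_k\|_Z$. The monotone convergence of function values in (ii) is then immediate.

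For the slope estimate in (ii), the key is to construct an explicit element of $\partial J(u_{k+1},z_{k+1})$ of controlled size. Since $\tfrac{\alpha}{2}\|\nabla\cdot\|_{L^2(\Omega)}^2$, $h(\cdot,z_{k+1})$ and the indicator $\mathcal{I}_{U_{ad}}$ enter the $u$-subproblem \eqref{eq:gen_algorithm_p_u} unchanged, the optimality condition for $u_{k+1}$ leaves, after cancellation against the matching terms of $\partial_u J$, the single residual $\nabla f(u_{k+1}) - \nabla_u g(u_{k+1},u_k) - \lambda_k(u_{k+1}-u_k) \in \partial_u J(u_{k+1},z_{k+1})$. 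Writing $\nabla_u g(u_{k+1},u_k) = \nabla f(u_k) + F'(u_k)^*F'(u_k)[u_{k+1}-u_k]$ and invoking \eqref{eq:fund_inequality_grad_ineq} with the derivative bounds \eqref{eq:bounded_derivatives_ass} and the uniform boundedness of $\lambda_k$ (from the backtracking \autoref{alg:backtracking1}) bounds this residual by $C\|u_{k+1}-u_k\|_{H_0^1(\Omega)}$. For the $z$-component, the near-stationarity \eqref{eq:gradient_inequality_z} at the stopping index gives $\dist(0,\partial g_k(z_{k+1})) < \sigma_2\eta_k$, and since $\partial g_k$ and $\partial_z J$ differ only by $\nabla_z h(u_k,z_{k+1}) - \nabla_z h(u_{k+1},z_{k+1})$, assumption (H3) contributes a further $C\|u_{k+1}-u_k\|_{H_0^1(\Omega)}$. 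Because $\partial J$ splits as a product subdifferential, these combine to $\dist(0,\partial J(u_{k+1},z_{k+1})) \le C\|u_{k+1}-u_k\|_{H_0^1(\Omega)} + \sigma_2\eta_k$. Squaring, replacing $\|u_{k+1}-u_k\|^2$ by $\tfrac{2}{C_{\mathrm{desc}}}(J(u_k,z_k)-J(u_{k+1},z_{k+1}))$, telescoping over $k$ and bounding the minimum by the average over $N$ steps yields the stated sublinear rate.

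For (iii), I would first observe that the bound just derived shows the slope tends to zero along the \emph{whole} sequence: $\|u_{k+1}-u_k\|_{H_0^1(\Omega)}\to 0$ by summability and $\eta_k \to 0$ by square-summability, so $\dist(0,\partial J(u_k,z_k))\to 0$. Given a limit point $(u^*,z^*)$, I would use the $H^2(\Omega)$-boundedness to extract, along the realizing subsequence, strong convergence $u_{k_j}\to u^*$ in $H_0^1(\Omega)$ via the compact embedding $H^2(\Omega)\hookrightarrow\hookrightarrow H_0^1(\Omega)$, together with $z_{k_j}\to z^*$ in the finite-dimensional $Z$. Choosing $\xi_j\in\partial J(u_{k_j},z_{k_j})$ with $\|\xi_j\|=\dist(0,\partial J(u_{k_j},z_{k_j}))\to 0$, I would pass to the limit using the outer semicontinuity (graph closedness) of the limiting subdifferential, licensed by the assumed continuity of $J$ at $(u^*,z^*)$ which ensures $J(u_{k_j},z_{k_j})\to J(u^*,z^*)$; this gives $0\in\partial J(u^*,z^*)$.

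The main obstacle is precisely the feature highlighted in the introduction: the nested $z$-loop does not provide sufficient decrease in the endpoint displacement, so $\|z_{k+1}-z_k\|_Z$ need not tend to zero and cannot be used to drive the limit passage. The argument must instead route everything through the slope-to-zero property of the \emph{outer} sequence, which is why the careful subgradient construction in (ii) is doing the real work. The second delicate point is the infinite-dimensional limit passage in (iii): weak $H_0^1(\Omega)$-convergence alone would not suffice to pass to the limit in the regularization gradient $-\alpha\Delta u_{k_j}\in H^{-1}(\Omega)$ nor in the normal cone of $U_{ad}$, which is exactly why the $H^2$-boundedness hypothesis — yielding strong $H_0^1(\Omega)$-convergence by compactness — is indispensable, and why the continuity of $J$ at the limit must be assumed in order to control the nonsmooth term $R$ under the limiting subdifferential.
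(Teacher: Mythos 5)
Your proposal is correct and follows essentially the same route as the paper's proof: monotone decrease from the two descent properties yields boundedness via coercivity and Poincaré; the slope bound is obtained by exhibiting exactly the same subgradient element (your residual $\nabla f(u_{k+1}) - \nabla_u g(u_{k+1},u_k) - \lambda_k(u_{k+1}-u_k)$ is, under the Riesz identification, the paper's $A_k$) combined with (H3) and the inner-loop stopping criterion, then telescoping; and part (iii) uses the compact embedding and closedness of the limiting subdifferential under $J$-attentive convergence. The only (immaterial) variations are that you bound $\|u_{k+1}-u_k\|^2$ directly via the accepted descent condition rather than through the approximation error $e_{\lambda_k}$, and you make explicit the upper bound on $\lambda_k$ from backtracking, which the paper leaves implicit.
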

\begin{proof} 
For the proof we denote $R_1(u) = \mathcal{I}_{U_{ad}}(u)$ for convenience and start with (i). As in the remark after \autoref{fundamental_inequality} we obtain
    \begin{equation}
    J(u_{k+1},z_{k+1}) \leq J(u_k,z_{k+1}) - \frac{C_{\mathrm{desc}}}{2}\|u_{k+1} - u_k\|_{H_0^1(\Omega)}^2 \notag
    \end{equation}
    for every $k \in \mathbb{N}$. Taking into account the descent property of \cref{alg:algorithm_z}, i.e. \autoref{assumption_z_step}, we infer 
    $J(u_k,z_{k+1}) \leq J(u_k,z_{k})$ and consequently
     \begin{equation}
    J(u_{k+1},z_{k+1}) \leq J(u_k,z_{k}) - \frac{C_{\mathrm{desc}}}{2}\|u_{k+1} - u_k\|_{H_0^1(\Omega)}^2. \notag
    \end{equation}
    As $J$ is bounded from below, the sequence $(J(u_k,z_k))_{k\in\mathbb{N}}$ converges monotonically to its infimum. By the same arguments as in the proof of \autoref{theorem_existence_general}, we infer boundedness of $(u_k,z_k)_{k\in\mathbb{N}}$ in $H_0^1(\Omega) \times Z$. \\
    To show (ii) we deduce from  \autoref{fundamental_inequality} that the approximation error $e_{\lambda_k}$ satisfies
    \begin{align*}
    e_{\lambda_k}(u,u_k) 
    &=
    g_{\lambda_k}(u,u_k) - f(u) \notag\\
    &=
    \frac{1}{2} \| F'(u_k)[u - u_k] + F(u_k) - f^\delta  \|^2_{L^2(\Omega)}  + \frac{\lambda_k}{2} \| u - u_k\|^2_{H_0^1(\Omega)} - \frac{1}{2}\|F(u) - f^\delta\|^2_{L^2(\Omega)} \notag\\
    &\geq 
    \frac{\lambda_k - L_2}{2} \| u - u_k\|^2_{H_0^1(\Omega)} 
    \geq \frac{C_{\mathrm{desc}}}{2} \| u - u_k\|^2_{H_0^1(\Omega)},
\end{align*}
where $L_2>0$ is the constant in \autoref{fundamental_inequality}. The optimality system for $u_{k+1}$ given $z_{k+1}$ gives:
\begin{equation}
    0 \in \nabla_u g(u_{k+1},u_k)  - \alpha \Delta u_{k+1} + \nabla_u h(u_{k+1},z_{k+1}) - \lambda_k \Delta 
(u_{k+1}-u_k) +   \partial R_1(u_{k+1}), \label{eq:global_conv_eq_1} 
\end{equation}
which follows from standard (convex) subdifferential calculus in the space $H^{-1}(\Omega)$.  We set 
\begin{equation}
  A_k:= \nabla  f(u_{k+1}) - \nabla f(u_k) -   F'(u_k)^*(F'(u_k)[u_{k+1}-u_k]  + \lambda_k \Delta (u_{k+1}-u_k) \in H^{-1}(\Omega) \notag 
\end{equation}
and rewrite \eqref{eq:global_conv_eq_1} as
\begin{equation}
    A_k \in \nabla f(u_{k+1}) - \alpha \Delta u_{k+1} + \nabla_u h(u_{k+1},z_{k+1}) +  \partial R_1(u_{k+1}) = \partial_u J(u_{k+1},z_{k+1}). \notag
\end{equation}
Using \autoref{fundamental_inequality} and the chain rule it is not difficult to see that $\| A_k \|_{H^{-1}(\Omega)} \leq C_1\|u_{k+1} - u_k \|_{H_0^1(\Omega)}$ for some $C_1>0$. For the $z$-step we obtain for arbitrary $w \in \partial R (z_{k+1})$
\begin{align*}
    \| \nabla_z   h(u_{k+1},z_{k+1}) + w \|_Z
     &= \| \nabla_z  h(u_{k+1},z_{k+1}) - \nabla_z  h(u_{k},z_{k+1}) \|_Z +
    \| \nabla_z  h(u_{k},z_{k+1}) + w \|_Z. \\
     &\leq C_2 \| u_{k+1} - u_{k}\|_{H_0^1(\Omega)} +
    \| \nabla_z  h(u_{k},z_{k+1}) + w \|_Z,
\end{align*}
where we used \autoref{assumptions_general} (H3) in the last inequality to find the constant $C_2>0$. We further observe by the minimization property, that 
\begin{align*}
e_{\lambda_k}(u_{k+1},u_k) 
&= g_{\lambda_k}(u_{k+1},u_k) - f(u_{k+1}) \\
& \leq 
f(u_{k}) + \frac{\alpha}{2} \|\nabla u_k \|^2_{L^2(\Omega)}  + h(u_{k},z_{k+1}) + R(z_{k+1}) + R_1(u_{k})  \\ 
& \quad - \frac{\alpha}{2} \|\nabla u_{k+1} \|^2_{L^2(\Omega)} - h(u_{k+1},z_{k+1}) - R_1(u_{k+1}) - R(z_{k+1}) - f(u_{k+1}).
\end{align*}
Using the fact that $h(u_{k},z_{k+1}) +  R(z_{k+1}) \leq h(u_{k},z_k) + R(z_{k})$ by the property \eqref{eq:descent_z},  we infer 
\begin{equation}
    e_{\lambda_k}(u_{k+1},u_k)
    \leq
     J(u_{k}, z_{k}) - J(u_{k+1},z_{k+1}). \notag
\end{equation}
Consequently, from previous inequalities, we deduce for every $k \in \mathbb{N}$
\begin{align}
    \|A_k\|^2_{H^{-1}(\Omega)} + \| \nabla_z   h(u_{k+1},z_{k+1}) + w \|^2_Z 
&\leq 
 (C_1^2+2C_2^2)\|u_{k+1} - u_k \|_{H_0^1}^2 + 2\| \nabla_z   h(u_{k},z_{k+1}) + w \|_Z^2  \notag \\
&\leq \frac{2(C_1^2+2C_2^2)}{C_{\mathrm{desc}}}  e_{\lambda_k}(u_{k+1},u_k) +  2\| \nabla_z   h(u_{k},z_{k+1}) + w \|_Z^2  \notag \\
&\leq  \frac{2(C_1^2+2C_2^2)}{C_{\mathrm{desc}}}
   \left( J(u_{k}, z_{k}) - J(u_{k+1},z_{k+1}) \right) \notag\\
&\quad 
+  2\| \nabla_z   h(u_{k},z_{k+1}) + w \|_Z^2. \label{eq:global_conv_eq_2} 
\end{align}
By the sum-rule for the convex subdifferential (cf. \cite{bauschke2017correction})  including the remark below, we deduce
\begin{equation}
    (A_k ,  \nabla_z   h(u_{k+1},z_{k+1}) + w ) \in \partial J(u_{k+1}, z_{k+1}) \notag
\end{equation}
and therefore, by definition of the slope, cf. \cite{frankel2015splitting} also 
\begin{equation}
    \dist(0,\partial J(u_{k+1},z_{k+1}))^2 \leq  
\ \frac{2(C_1^2+2C_2^2)}{C_{\mathrm{desc}}}
   \left( J(u_{k}, z_{k}) - J(u_{k+1},z_{k+1}) \right)
+  2\| \nabla_z   h(u_{k},z_{k+1}) + w \|_Z^2. \notag
\end{equation}
Taking the infimum over all $w \in \partial R(z_{k+1})$ we deduce  
\begin{equation}
   \dist(0,\partial J(u_{k+1},z_{k+1}))^2 \leq  
 \frac{2(C_1^2+2C_2^2)}{C_{\mathrm{desc}}}
   \left( J(u_{k}, z_{k}) - J(u_{k+1},z_{k+1}) \right)
+  2 \eta_k. \notag
\end{equation}
Summing from $0$ to $N-1$ leads to 
\begin{equation}
   \sum_{k=0}^{N-1} \dist(0,\partial J(u_{k+1},z_{k+1}))^2
    \leq \frac{2(C_1^2+2C_2^2)}{C_{\mathrm{desc}}}\left( J(u_0,z_0) - J(u_{N},z_{N})\right) + 2\sum_{k=0}^{N-1} \eta_k^2. \notag
\end{equation}
Consequently, with $C := \sqrt{\max(2, 2(C_1^2 + C_2^2)/C_{\mathrm{desc}})}>0$, we obtain 
\begin{equation}
    \min_{k=1,\ldots,N}  
    \dist(0,\partial J(u_{k},z_{k}))
    \leq
     C \sqrt{ \frac{\left( J(u_0,z_0) - J(u_{N},z_{N}) \right) + \sum_{k=0}^{N-1} \eta_k^2 }{N}}, \label{eq:thm_global_final_eq}
\end{equation}
which proves the assertion in (ii). \\
To show (iii) assume an accumulation point ${x^*:=(u^*,z^*) \in U_{ad} \times Z}$. We recall that $H^2(\Omega) \hookrightarrow H^1(\Omega)$ compactly, cf. \cite{attouch2014variational}. Thus we may extract a $H^1$-strongly convergent subsequence such that $u_{n_k} \to u^*$ and $z_{n_k} \to z^*$ after selection of a suitable subsequence in $H^1(\Omega) \times Z$. By \eqref{eq:thm_global_final_eq} we know that for any $v_{k} \in \partial J(u_{n_k},z_{n_k})$ ,we have $v_k \to 0$ as $k \to \infty$.
Moreover by exploiting continuity of $J$ we obtain
\begin{equation}
    J(x_{n_k}) \to J(x^*) \quad v_{k} \to 0 \quad x_{n_k} \to x^*.
\end{equation}
That is the definition of the so called $J$-attentive convergence. By the closedness of the limiting subdifferential under J-attentive convergence, cf. \cite{frankel2015splitting}, we infer that $0 \in \partial J(x^*)$. 
\end{proof}

\noindent 
\subsection{Local convergence analysis under the KL-inequality}
In this section we analyze local strong convergence of \cref{alg:algorithm_general} near a global minimizer $x^* = (u^*, z^*) \in U_{ad} \times Z$, focusing on conditions that ensure fast local convergence of the Levenberg–Marquardt method. During this section we will often use the notation $x=(u,z)$ and the product norm defined by $\|x\|_X^2 := \|u\|_{H_0^1(\Omega)}^2 + \|z\|_Z^2$.
\begin{assumption}[On a global minimum]\label{assumption_local}
 We impose the following conditions:
\begin{itemize}
\item[(C1)] $x^*= (u^*,z^*) \in U_{ad} \times Z$ is a global minimizer of \eqref{eq:p0_general} at which \(J\) is continuous in \(\|\cdot\|_X\).
\item[(C2)] There exists $\kappa > 0$ such that
\begin{equation}
    \langle \nabla^2 f^\alpha(u^*)[h], h \rangle_{H^{-1}(\Omega)} \geq \frac{\kappa}{2} \|h\|^2_{H^1_0(\Omega)} \quad \forall h \in H^1_0(\Omega). \label{eq:local_assumption_hessian}
\end{equation}
\item[(C3)] The function $g(z) := h(u^*,z) + R(z)$ satisfies a Kurdyka–Łojasiewicz (KL) inequality at $z^*$ with exponent $\beta = 1 - 1/q$, $q > 2$, i.e., there exist constants $\eta, C > 0$ and $\varepsilon > 0$ such that
\begin{equation}
    g(z) - g(z^*) \leq C \dist(0, \partial g(z))^q \quad \text{for $z \in B_{\varepsilon}(z^*)$ with $g(z^*) < g(z) < g(z^*) + \eta$}.\label{eq:KL_inequality_exponent}
\end{equation}
  The constants $q$, $\varepsilon_{}$, $C_{}$, and $\eta$ may depend on $u^*$.
\end{itemize}
\end{assumption}
\noindent
\begin{remark}[General KL-inequality] The KL-inequaltity with a given exponent is only a special case of the general KL inequality which is investigated in \cite{attouch2010proximal,attouch2013convergence} and has the form
\begin{equation}
    \varphi' (g(z) - g(z^*)) \dist(0,\partial g(z)) \geq 0. \label{eq:KL_inequality_general}
\end{equation}
 For a certain concave $C^1$-function $\varphi:(0,\eta) \to \mathbb{R}_+$, also called \emph{desingularization function}. We directly see that \eqref{eq:KL_inequality_exponent} is obtained from \eqref{eq:KL_inequality_general} by using $\varphi(t) = C_{}^{1-\beta} t ^\beta$.  We will not go further into details here, but rather refer to \cite{attouch2013convergence} for details.   
\end{remark}
\begin{remark}[Discussion of \autoref{assumption_local}.]
   Condition (C2) is needed to apply Robinson’s generalized implicit function theorem \cite{dontchev2009implicit,ito2008lagrange}, which yields a locally Lipschitz continuous solution mapping $z \mapsto S(z)$ around $z \in Z$. Condition (C3), based on the KL-inequality, has become standard in finite-dimensional optimization \cite{attouch2010proximal,attouch2013convergence,bolte2014proximal}. The KL property holds for many functions, especially those definable in o-minimal structures \cite{attouch2013convergence,bolte2010characterizations,ochs2015long}. A large KL-exponent $\beta \in (0,1]$ (i.e., $q \to \infty$) typically indicates faster local convergence due to greater sharpness near stationary points. While the existence of some $\beta$ is often easy to establish via real algebraic geometry, computing an explicit value is typically difficult; see \cite{li2018calculus,yu2022kurdyka} for some results. Extensions to infinite dimensions exist \cite{chill2003lojasiewicz,bolte2010characterizations,hauer2019kurdyka}, though KL-exponents are harder to compute due to the lack of corresponding  tools from real algebraic geometry.
\end{remark}
\noindent
For the subsequent part, we need uniform growth properties, as proven in  the following lemma .
\begin{lemma}[Uniform Taylor bound]\label{lemma_quadratic_growth} Let $u^* \in H_0^1(\Omega)$ such that
\begin{equation}
    \langle \nabla f^\alpha (u^*) [h], h \rangle_{H^{-1}(\Omega)} \geq \frac{\kappa}{2} \| h \|^2_{H_0^1(\Omega)} \quad \text{for all $h \in H_0^1(\Omega)$}. \notag
\end{equation}
Then there is an $\varepsilon>0$ such that 
\begin{equation}
f^\alpha(u) \geq f^\alpha(u_0) + \langle \nabla f^\alpha(u_0) , u - u_0 \rangle_{H^{-1}(\Omega)} + \frac{\kappa}{8} \|u - u_0\|_{H_0^1(\Omega)}^2, \notag
\end{equation}
whenever $u,u_0 \in B_\varepsilon(u^*)$.
\end{lemma}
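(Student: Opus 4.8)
The plan is to leverage the coercivity of the Hessian at the single point $u^*$ together with continuity of $\nabla^2 f^\alpha$ to obtain a uniform lower bound on the Hessian quadratic form over an entire ball, and then to integrate this bound through a second-order Taylor expansion with integral remainder. (I read the hypothesis as a statement about the Hessian $\nabla^2 f^\alpha(u^*)$, consistent with (C2).)

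First I would record that $f^\alpha$ is twice continuously Fréchet differentiable as a functional on $H_0^1(\Omega)$. Indeed, by (B1) the map $F\colon L^p(\Omega)\to L^2(\Omega)$ is $C^2$ and $H_0^1(\Omega)\hookrightarrow L^p(\Omega)$ continuously, so $u\mapsto \tfrac12\|F(u)-f^\delta\|_{L^2(\Omega)}^2$ is $C^2$ on $H_0^1(\Omega)$; the quadratic term $\tfrac{\alpha}{2}\|\nabla u\|_{L^2(\Omega)}^2$ is smooth with constant (symmetric) Hessian. Consequently $u\mapsto \nabla^2 f^\alpha(u)\in \mathcal{L}(H_0^1(\Omega),H^{-1}(\Omega))$ is continuous at $u^*$. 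Using this, I would choose $\varepsilon>0$ so small that $\|\nabla^2 f^\alpha(v)-\nabla^2 f^\alpha(u^*)\|_{\mathcal{L}(H_0^1,H^{-1})}\le \kappa/4$ for all $v\in B_\varepsilon(u^*)$. Combined with the assumed coercivity of $\nabla^2 f^\alpha(u^*)$ this yields, for every $h\in H_0^1(\Omega)$ and every $v\in B_\varepsilon(u^*)$,
\[
\langle \nabla^2 f^\alpha(v)[h],h\rangle_{H^{-1}(\Omega)} \ge \frac{\kappa}{2}\|h\|_{H_0^1(\Omega)}^2 - \frac{\kappa}{4}\|h\|_{H_0^1(\Omega)}^2 = \frac{\kappa}{4}\|h\|_{H_0^1(\Omega)}^2,
\]
i.e. the Hessian quadratic form is \emph{uniformly} coercive on the ball, with constant $\kappa/4$.

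Second, for $u,u_0\in B_\varepsilon(u^*)$ I would apply the second-order Taylor formula with integral remainder,
\[
f^\alpha(u) = f^\alpha(u_0) + \langle \nabla f^\alpha(u_0), u-u_0\rangle_{H^{-1}(\Omega)} + \int_0^1 (1-t)\,\langle \nabla^2 f^\alpha(u_0+t(u-u_0))[u-u_0],\, u-u_0\rangle_{H^{-1}(\Omega)}\,\diff t.
\]
Since $B_\varepsilon(u^*)$ is convex, the entire segment $u_0+t(u-u_0)$, $t\in[0,1]$, remains in the ball, so the integrand is bounded below by $(1-t)\tfrac{\kappa}{4}\|u-u_0\|_{H_0^1(\Omega)}^2$. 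Integrating $\int_0^1(1-t)\,\diff t=\tfrac12$ produces exactly the claimed constant $\kappa/8$, which finishes the argument.

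The main obstacle is the transition from a pointwise spectral statement at $u^*$ to a uniform estimate on a neighborhood: one must make the operator-norm continuity of $\nabla^2 f^\alpha$ rigorous in the infinite-dimensional setting and likewise justify the integral-remainder Taylor expansion in $H_0^1(\Omega)$, both of which rest on the $C^2$-regularity from (B1) transported through the continuous embedding $H_0^1(\Omega)\hookrightarrow L^p(\Omega)$, and on the fact that the coercivity constant $\kappa$ survives a sufficiently small perturbation of the base point. The remaining computations are routine.
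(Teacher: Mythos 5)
Your proof is correct and follows essentially the same route as the paper: both transfer the coercivity of $\nabla^2 f^\alpha$ at $u^*$ to a neighborhood via operator-norm continuity of the Hessian (resting on the $C^2$-regularity from (B1)) and then integrate through the second-order Taylor formula with integral remainder. The only cosmetic difference is that you first establish uniform $\kappa/4$-coercivity on the whole ball and then bound the remainder, whereas the paper adds and subtracts $\nabla^2 f^\alpha(u^*)$ inside the expansion and bounds the two resulting error terms by $\kappa/16$ each; both yield the constant $\kappa/8$.
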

\begin{proof}
The result follows from Taylor's formula with integral remainder:
\begin{align*}
    f^\alpha(u) &= f^\alpha(u_0) + \langle \nabla f^\alpha(u_0), u - u_0 \rangle_{H^{-1}(\Omega)} 
    + \frac{1}{2} \langle \nabla^2 f^\alpha(u_0)[u - u_0], u - u_0 \rangle_{H^{-1}(\Omega)} \\
    &\quad + \int_0^1 (1-\tau) \langle [\nabla^2 f^\alpha(u_0 + \tau(u - u_0)) - \nabla^2 f^\alpha(u_0)][u - u_0], u - u_0 \rangle_{H^{-1}(\Omega)} \, d\tau.
\end{align*}
Adding and subtracting \(\frac{1}{2} \langle \nabla^2 f^\alpha(u^*)[u - u_0], u - u_0 \rangle_{H^{-1}(\Omega)}\), and applying the assumptions, we obtain:
\begin{align}
    f^\alpha(u) &\geq f^\alpha(u_0) + \langle \nabla f^\alpha(u_0), u - u_0 \rangle_{H^{-1}(\Omega)} 
    + \frac{\kappa}{4} \|u - u_0\|_{H_0^1(\Omega)}^2 \notag \\
    &\quad - \frac{1}{2} \| \nabla^2 f^\alpha(u_0) - \nabla^2 f^\alpha(u^*) \| \cdot \|u - u_0\|_{H_0^1(\Omega)}^2 \notag \\
    &\quad - \int_0^1 (1 - \tau) \| \nabla^2 f^\alpha(u_0 + \tau(u - u_0)) - \nabla^2 f^\alpha(u_0) \| \cdot \|u - u_0\|_{H_0^1(\Omega)}^2 \, d\tau. \label{eq:eq_uniform_taylor_eq1_short}
\end{align}
By continuity of \(\nabla^2 f^\alpha\), choosing \(\varepsilon_1 > 0\) such that for all \(v_1, v_2\) with \(\|v_i - u^*\|_{H_0^1(\Omega)} < \varepsilon_1\) we have
\begin{equation}
\| \nabla^2 f^\alpha(v_1) - \nabla^2 f^\alpha(v_2) \| \leq \frac{\kappa}{8}.
\end{equation}
 Setting \(\varepsilon := \varepsilon_1/2\) we deduce from  \eqref{eq:eq_uniform_taylor_eq1_short} that for all \(u, u_0\) with \(\|u - u^*\| < \varepsilon\) and \(\|u_0 - u^*\| < \varepsilon\), 
\begin{equation}
f^\alpha(u) \geq f^\alpha(u_0) + \langle \nabla f^\alpha(u_0), u - u_0 \rangle_{H^{-1}(\Omega)} + \left( \frac{\kappa}{4} - \frac{\kappa}{16} - \frac{\kappa}{16} \right) \|u - u_0\|_{H_0^1(\Omega)}^2,
\end{equation}
which gives the claim.
\end{proof}
\noindent
We will now show that the set-valued map, which sends $z$ to the set of stationary points of $J(\cdot,z)$ is in fact locally  single-valued and Lipschitz continuous.
\begin{lemma}[Local parameterization of the solution map]\label{lemma_ift}
Let \autoref{assumption_local}, (C2) hold at a stationary point $ x^* \in H_0^1(\Omega) \times Z$ with $\kappa>0$.  Denote the partial solution map by 
\begin{equation}
\omega: Z \rightrightarrows H_0^1(\Omega), \quad \omega(z) = \partial_u J(\cdot, z)^{-1}(0).
\end{equation}
Then the following two assertions are true:
\begin{itemize}
    \item[(i)]There is an $\varepsilon_1>0$ such that $\omega: Z \rightrightarrows H_0^1(\Omega)$ admits a single-valued, \(L_S = L_S({z^*})\)-Lipschitz continuous localization \(S: B_\varepsilon(z^*) \to H_0^1(\Omega)\) with $L \leq \mathcal{O}(1/\kappa)$ and {\(S(z^*) = u^*\)}. \\
    \item[(ii)] There are $0<\varepsilon_2 \leq \varepsilon_1$ with $\varepsilon_1$ from (i) and \(C_1 > 0\) independent of \(k\) such that, 
    \begin{align}
    \|S(z_{k+1}) - u_k\|_{H_0^1(\Omega)} &\leq C_1 \|u_{k+1} - u_k\|_{H_0^1(\Omega)}, \label{eq:ift_1} \\
    J(S(z_{k+1}), z_{k+1}) + \frac{\kappa}{8} \|S(z_{k+1}) - S(z^*)\|_{H_0^1(\Omega)}^2 &\leq J(S(z^*), z_{k+1}), \label{eq:ift_2} 
    \end{align}
    whenever $\|x_{k+1} - x^*\|_X \leq \varepsilon_2$. 
\end{itemize}
\end{lemma}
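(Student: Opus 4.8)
The plan is to derive part (i) from Robinson's generalized implicit function theorem applied to the stationarity inclusion $0 \in \partial_u J(u,z)$, and then to build part (ii) on top of the Lipschitz localization $S$ together with the uniform Taylor bound of the preceding lemma. For part (i), I would first write the $u$-stationarity condition as a generalized equation $0 \in \nabla f^\alpha(u) + \nabla_u h(u,z) + \partial R_1(u)$, where $R_1 = \mathcal{I}_{U_{ad}}$. The map $u \mapsto \nabla f^\alpha(u) + \nabla_u h(u,z)$ is $C^1$ near $u^*$ (by \autoref{assumptions_general} and the smoothness of $F$ and $h$), and by hypothesis (C2) its derivative at $u^*$, namely $\nabla^2 f^\alpha(u^*) + \nabla_u^2 h(u^*,z^*)$, is strongly monotone with constant $\kappa/2$ on $H_0^1(\Omega)$ — note that $\nabla_u^2 h \succeq 0$ by the partial convexity assumption (H2), so it only helps. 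Strong regularity of the inclusion at $(u^*,z^*)$ then follows because the linearized inclusion $w \in A h + \partial R_1(u^* + h)$ is uniquely solvable and Lipschitz in $w$ with constant $\mathcal{O}(1/\kappa)$, using strong monotonicity of $A$ and maximal monotonicity of $\partial R_1$. Robinson's theorem yields the single-valued, $L_S$-Lipschitz localization $S$ on a ball $B_{\varepsilon_1}(z^*)$ with $S(z^*)=u^*$ and $L_S \leq \mathcal{O}(1/\kappa)$.

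For part (ii), inequality \eqref{eq:ift_2} is the more routine of the two: $S(z_{k+1})$ is by construction the unique $u$-minimizer of the strongly convex map $u \mapsto f^\alpha(u) + h(u,z_{k+1}) + \mathcal{I}_{U_{ad}}(u)$ near $u^*$, so I would apply the uniform Taylor bound from \autoref{lemma_quadratic_growth} (valid once $S(z_{k+1})$ and $S(z^*)=u^*$ both lie in $B_\varepsilon(u^*)$, which is arranged by shrinking to $\varepsilon_2$) to get the $\frac{\kappa}{8}$-quadratic growth of $J(\cdot,z_{k+1})$ around its minimizer $S(z_{k+1})$, then evaluate at $S(z^*)$. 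The convexity of $u\mapsto h(u,z_{k+1})$ ensures the quadratic lower bound for $f^\alpha$ transfers to the full objective.

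The main obstacle is inequality \eqref{eq:ift_1}, which compares the \emph{exact} solution $S(z_{k+1})$ of the $u$-subproblem against the \emph{single Levenberg--Marquardt step} $u_{k+1}$ taken by the algorithm from $(u_k,z_{k+1})$. The plan is to insert $u_{k+1}$ and use the triangle inequality $\|S(z_{k+1})-u_k\| \leq \|S(z_{k+1})-u_{k+1}\| + \|u_{k+1}-u_k\|$, so it suffices to bound $\|S(z_{k+1})-u_{k+1}\|$ by a multiple of $\|u_{k+1}-u_k\|$. Here I would exploit that both points solve closely related strongly convex problems: $u_{k+1}$ minimizes the model $g_{\lambda_k}(\cdot,u_k)+\frac{\alpha}{2}\|\nabla\cdot\|^2+h(\cdot,z_{k+1})+\mathcal{I}_{U_{ad}}$ while $S(z_{k+1})$ minimizes the exact objective $f^\alpha+h(\cdot,z_{k+1})+\mathcal{I}_{U_{ad}}$. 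Subtracting the two optimality systems and testing the difference against $S(z_{k+1})-u_{k+1}$, the strong monotonicity (constant $\gtrsim \kappa$, or $\lambda_k$ from the proximal term) controls the left side, while the right side is the gap between the linearized residual $F'(u_k)[\cdot-u_k]+F(u_k)-f^\delta$ and the true residual $F(\cdot)-f^\delta$, which by the approximation inequality \eqref{eq:fund_inequality_approx_inequa} of \autoref{fundamental_inequality} is $\mathcal{O}(\|\cdot-u_k\|^2)$ — hence controllable by $\|u_{k+1}-u_k\|$ once we are in the small ball $B_{\varepsilon_2}$. Making the constant $C_1$ uniform in $k$ requires that $\lambda_k$ stays bounded (guaranteed by the backtracking lemma) and that $\varepsilon_2$ be chosen small enough, independently of $k$, so that the quadratic error term is dominated by the linear one; this careful, $k$-uniform choice of the neighborhood is where the technical care concentrates.
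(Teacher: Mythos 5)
Your proposal is correct and follows essentially the same route as the paper's proof: Robinson's generalized implicit function theorem with strong regularity supplied by $\kappa$-strong monotonicity for part (i), the uniform Taylor bound plus convexity of $h(\cdot,z_{k+1})+\mathcal{I}_{U_{ad}}$ evaluated at $S(z^*)=u^*$ for \eqref{eq:ift_2}, and subtraction of the two optimality systems tested against $S(z_{k+1})-u_{k+1}$ followed by the triangle inequality for \eqref{eq:ift_1}. The only slip is that the discrepancy term arising in that last step (the paper's $A_k$) is a gradient-level residual, bounded linearly by $\|u_{k+1}-u_k\|_{H_0^1(\Omega)}$ via the Lipschitz-gradient estimate \eqref{eq:fund_inequality_grad_ineq} and the boundedness of the derivatives in \autoref{assumptions_general}, rather than via the function-value inequality \eqref{eq:fund_inequality_approx_inequa} you invoke --- a cosmetic rather than structural difference.
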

\begin{proof} 
For the proof, we again use the notation $R_1(u) =  \mathcal{I}_{U_{ad}}(u)$ for simplicity. By assumption, $u^* \in \omega(z^*)$ or equivalently, $u^*$ solves the generalized equation
       $0 
         \in G(u,z^*) + \partial R_1(u) := \nabla f(u) - \alpha \Delta u  + \nabla_u h(u,z^*) + \partial R_1(u)$, 
    in $H^{-1}(\Omega)$. We aim to apply the generalized implicit function theorem by Robinson in the version \cite{dontchev2021lectures}, Theorem 8.5, see also \cite{dontchev2009implicit}. For this purpose consider an approximated generalized equation around $z^* \in Z$, namely $0 \in G_{z^*}(u)   + \partial R_1(u)$, with $G_{z^*}: H_0^1(\Omega) \to H^{-1}(\Omega)$ defined as
    \begin{equation}
        G_{z^*}(u) :=  \nabla f(u^*) +  \nabla^2f(u^*[u-u^*]  - \alpha \Delta u + \nabla_u h(u,z^*). \notag
    \end{equation}
     To apply the generalized implicit function theorem, we need to show strong regularity of the linearized equation. For this, it suffices to verify that the solution map of the linearized equation,
    \begin{equation}
    S_{z^*}:H^{-1}(\Omega) \to H_0^1(\Omega) \quad  S_{z^*}(p) =  (G_{z^*} + \partial R_1)^{-1}(p), \notag
    \end{equation}
    has a Lipschitz-continuous single valued localization around $p = 0$. This is a consequence of the fact that $p \in G_{z^*}(u) + \partial R_1(u)$ if and only if
    \begin{equation}
        S_{z^*}(p) \in \argmin_{u \in H_0^1(\Omega)} \; \langle \nabla^2 f(u^*)[u - u^*] , u-u^* \rangle_{H^{-1}(\Omega)} + \frac{\alpha}{2}\| \nabla u \|^2_{L^2(\Omega)}  + h(u,z^*)  - \langle p, u \rangle_{H^{-1}(\Omega)} + R_1(u).  \notag
    \end{equation}
    As this problem is $\kappa$-strongly convex, with $\kappa>0$ from \autoref{assumption_local}, it follows from standard arguments that the associated solution map $S_{z^*}$ is single-valued and $1/\kappa$ -Lipschitz continuous. Introducing the approximation error 
    \begin{equation}
        e(u,z) := G_{z^*}(u) - G(u,z) = \nabla f(u) - \nabla f(u^* - \nabla^2f(u^*)[u-u^*], \notag  
    \end{equation}
    the partial uniform Lipschitz-modulus $\widehat{\mathrm{Lip}}_{u^*}(G_{z^*} - G;(u^*,z^*) ) $ 
    as introduced in \cite[p.8]{dontchev2021lectures}, satisfies
    \begin{align*}
    \widehat{\mathrm{Lip}}_{u^*}(G_{z^*} - G;(u^*,z^*) ) 
    &= 
    \mathop{\limsup_{u_1, u_2 \to u^*}}_{z \to z^*} \frac{\| e(u_1,z) - e(u_2,z) \|_{H^{-1}(\Omega)}} { \| u_1 - u_2\|_{H_0^1(\Omega)}}  \\
     &= 
    \limsup_{u_1, u_2 \to u^*} \frac{\| \nabla f(u_1) - \nabla f(u_2) - \nabla^2 f(u^*[u_1-u_2] \|_{H^{-1}(\Omega)}} { \| u_1 - u_2\|_{H_0^1(\Omega)}} \\
    &\leq \limsup_{u_1, u_2 \to u^*}
         \left(\int_0^1 \| \nabla^2 f(u_2 + \tau(u_1-u_2)) - \nabla^2f(u^*)\|_{H^{-1}(\Omega)} \diff \tau \right) = 0,
    \end{align*}
 where we used the mean value theorem in the last inequality. By \cite[Theorem 8.5]{dontchev2021lectures}, the mapping $\omega:Z \rightrightarrows H_0^1(\Omega)$ has a single-valued Lipschitz continuous localization $S:Z \to H_0^1(\Omega)$ in a ball $B_\varepsilon(z^*)$ with a Lipschitz-constant $0 < L_S \lesssim 1/\kappa$, showing the first part of \autoref{lemma_ift}. Now  consider the first-order optimality condition for deriving $u _{k+1}$ given $z = z_{k+1}$, as in \eqref{eq:P_u}:
    \begin{equation}
        R_1(u) \geq  - \langle \nabla f(u_{k+1}) - \alpha \Delta u_{k+1}  + \nabla_u h(u_{k+1},z_{k+1}) - A_k,u-u_{k+1} \rangle_{H^{-1}(\Omega)} + R_1(u_{k+1}), \label{eq:ift_eq1}
    \end{equation}
    for every $u \in H_{0}^1(\Omega)$, where we defined $A_k \in H^{-1}(\Omega)$ as
    \begin{equation}
           A_k:= \nabla f(u_{k+1}) - \nabla f(u_k)  -  F'(u_k)^* F'(u_k)[u_{k+1}-u_k]) + \lambda_k\Delta (u_{k+1}-u_k). \notag
    \end{equation}
      Similarly the first order conditions for the stationary point $S(z_{k+1}) \in H_0^1(\Omega)$ are given by:
     \begin{equation}
        R_1(u) \geq  - \langle \nabla f(S(z_{k+1})) - \alpha \Delta S(z_{k+1})  + \nabla_u h(S(z_{k+1}),z_{k+1}),u-S(z_{k+1})\rangle + R_1(S(z_{k+1})), \label{eq:ift_eq2}
    \end{equation}
    for all $u \in H_0^1(\Omega)$. Adding \eqref{eq:ift_eq1} with $u = S(z_{k+1})$ and \eqref{eq:ift_eq2} with $u = u_{k+1}$ yields
    \begin{align*}
        0 
        &\geq  \langle \nabla f^\alpha(S(z_{k+1})) - \nabla f^\alpha(u_{k+1}),S(z_{k+1}) - u_{k+1}\rangle_{H^{-1}(\Omega)}\\
        & \quad + \langle \nabla_u h(S(z_{k+1}),z_{k+1}) - \nabla_u h(u_{k+1},z_{k+1}) , S(z_{k+1}) -u_{k+1} \rangle_{H^{-1}(\Omega)} - \langle q_k, S(z_{k+1}) - u_{k+1} \rangle_{H^{-1}(\Omega)}.
    \end{align*}
    Note that by \autoref{lemma_quadratic_growth}, we infer
    \begin{equation}
        \langle \nabla f^\alpha(S(z_{k+1})) - \nabla f^\alpha(u_{k+1}),S(z_{k+1}) - u_{k+1}\rangle_{H^{-1}(\Omega)} \geq \frac{\kappa}{8}\|S(z_{k+1}) - u_{k+1}\|^2_{H_0^1(\Omega)}
    \end{equation}
    whenever $\|x_{k+1} - x^*\|_X \leq \varepsilon$ with a possibly smaller $\varepsilon$.
    By shifting the term with $A_k$ on the left hand side and use \autoref{lemma_quadratic_growth}, we obtain the bound:
    \begin{align*}
        \frac{\kappa}{8}\| S(z_{k+1}) - u_{k+1} \|^2_{H_0^1(\Omega)}  &\leq \| q_k\|_{H^{-1}(\Omega)} \| S(z_{k+1}) - u_{k+1}\|_{H^1_0(\Omega)} \\ 
        &\leq 
        \tilde{C} \| u_{k+1} - u_k \|_{H_0^1(\Omega)} \| S(z_{k+1}) - u_{k+1}\|_{H_0^1(\Omega)} 
    \end{align*}
    for a constant $\tilde{C}>0$. The estimate $\| A_k\|_{H^{-1}(\Omega)} \leq \tilde{C}\| u_{k+1} - u_k \|_{H_0^1(\Omega)} $ follows as in the proof of \autoref{thm_global_convergence}.
    If $\|S(z_{k+1}) - u_{k+1}\|_{H_0^1(\Omega)} = 0$, \eqref{eq:ift_1} is obvious. For the other case we obtain from the triangle inequality
    \begin{equation}
        \| S(z_{k+1}) - u_{k} \|_{H_0^1(\Omega)} \leq (\tilde{C} + 1) \| u_{k+1} - u_k \|_{H_0^1(\Omega)} ,
    \end{equation}
    thus \eqref{eq:ift_1} follows with $C_1 = \tilde{C} + 1$. To show \eqref{eq:ift_2}, consider again  \autoref{lemma_quadratic_growth}, which guarantees that     \begin{align}
       f^\alpha(u^*) \geq f^\alpha(S(z_{k+1})) + \langle \nabla f^\alpha(S(z_{k+1})) , u^* - S(z_{k+1}) \rangle_{H^{-1}(\Omega)} + \frac{\kappa}{8} \|u^* - S(z_{k+1})\|_{H_0^1(\Omega)}^2, \notag
    \end{align}
    whenever $ \|x^* - x_{k+1} \|_{X} \leq \varepsilon$. Moreover by convexity of $h(\cdot,z_{k+1}) + R_1$
    \begin{equation}
        h(u^*,z_{k+1}) + R_1(u^*) \geq \langle\nabla_u h(S(z_{k+1}),z_{k+1}) + v,u^* - S(z_{k+1}) \rangle + h(S(z_{k+1}), z_{k+1}) + R_1(S(z_{k+1})) \notag
    \end{equation}
    for all  $v\in \partial R_1(S(z_{k+1}))$. Adding both previous inequalites and using first order optimality of $S(z_{k+1})$, we infer \eqref{eq:ift_2} with $\varepsilon_1 = \varepsilon$.
\end{proof}
\noindent 
\begin{remark}
    In the subsequent part of the article, we will always use the notation
        $S: B_\varepsilon(z^*)\cap Z \to H_0^1(\Omega)$ 
    for the single-valued localization of $\omega$ from \autoref{lemma_ift}.
\end{remark}
\noindent
Before presenting the main theorem of the section let us recall the following descent lemma:
\begin{lemma}[General descent inequality]\label{lemma_general_descent}
For arbitrary $u \in U_{ad}$ and step sizes $\lambda_k - L_2 \geq C_{desc}$ the following inequalities hold true for any $k \in \mathbb{N}$ and $u \in H_0^1(\Omega)$:
\begin{align}
    J(u,z_{k+1})
    &\geq 
    J(x_{k+1}) + \left( \frac{\lambda_k - L_2}{2} \right) \| u_{k+1} - u_{k} \|^2_{H_0^1(\Omega)} - \left( \frac{\lambda_k + L_2}{2}\right) \|u - u_k\|^2_{H_0^1(\Omega)},  \label{eq:gen_descent_local_1}\\
    J(x_k) 
    &\geq J(x_{k+1}) 
    + \left( \frac{\lambda_k - L_2}{2} \right) \| u_{k+1} - u_{k} \|^2_{H_0^1(\Omega)} + \sigma_1 \sum_{i=0}^{n_k - 1} \|z_k^{i+1} -z_k^{i}\|^2_Z. \label{eq:gen_descent_local_2}
\end{align}    
Here, $L_2 >0$ denotes again the constant in \autoref{fundamental_inequality}.
\end{lemma}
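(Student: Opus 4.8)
The plan is to obtain both estimates from the two-sided model-function bound $|g(u,v)-f(u)|\le \tfrac{L_2}{2}\|u-v\|_{H_0^1(\Omega)}^2$ of \autoref{fundamental_inequality}, see \eqref{eq:fund_inequality_approx_inequa}, together with the global minimality of $u_{k+1}$ in the $u$-step and the descent property of the inner $z$-loop. Throughout I abbreviate the $u$-subproblem objective from \eqref{eq:P_u} as $\Phi_k(u):=g_{\lambda_k}(u,u_k)+\tfrac{\alpha}{2}\|\nabla u\|_{L^2(\Omega)}^2+h(u,z_{k+1})+\mathcal{I}_{U_{ad}}(u)$, so that $u_{k+1}=\argmin_u\Phi_k(u)$ and hence $\Phi_k(u)\ge\Phi_k(u_{k+1})$ for every $u\in H_0^1(\Omega)$, and I recall $g(u,u_k)=g_{\lambda_k}(u,u_k)-\tfrac{\lambda_k}{2}\|u-u_k\|_{H_0^1(\Omega)}^2$ from \eqref{eq:def_model_function_u}.

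For the first inequality \eqref{eq:gen_descent_local_1} I may assume $u\in U_{ad}$, since otherwise $J(u,z_{k+1})=+\infty$ and there is nothing to prove. Starting from $J(u,z_{k+1})=f(u)+\tfrac{\alpha}{2}\|\nabla u\|^2+\mathcal{I}_{U_{ad}}(u)+h(u,z_{k+1})+R(z_{k+1})$, I would estimate $f(u)$ from below by $g(u,u_k)-\tfrac{L_2}{2}\|u-u_k\|^2$ and then substitute $g(u,u_k)=g_{\lambda_k}(u,u_k)-\tfrac{\lambda_k}{2}\|u-u_k\|^2$; this produces $J(u,z_{k+1})\ge\Phi_k(u)+R(z_{k+1})-\tfrac{\lambda_k+L_2}{2}\|u-u_k\|^2$. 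Using minimality $\Phi_k(u)\ge\Phi_k(u_{k+1})$ and then bounding $\Phi_k(u_{k+1})$ from below by the opposite inequality $g(u_{k+1},u_k)\ge f(u_{k+1})-\tfrac{L_2}{2}\|u_{k+1}-u_k\|^2$, the proximal term $\tfrac{\lambda_k}{2}\|u_{k+1}-u_k\|^2$ absorbs this error and leaves $\Phi_k(u_{k+1})+R(z_{k+1})\ge J(x_{k+1})+\tfrac{\lambda_k-L_2}{2}\|u_{k+1}-u_k\|^2$. Combining the two chains yields \eqref{eq:gen_descent_local_1}.

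For the second inequality \eqref{eq:gen_descent_local_2} I would chain a descent over the inner $z$-loop with \eqref{eq:gen_descent_local_1} evaluated at $u=u_k$. Since the $z$-update generates a descent sequence in the sense of \autoref{def:descent_sequence}, summing \eqref{eq:descent_z} over $i=0,\dots,n_k-1$ and using $z_k^0=z_k$, $z_{k+1}=z_k^{n_k}$ telescopes to $g_k(z_{k+1})\le g_k(z_k)-\tfrac{\sigma_1}{2}\sum_{i=0}^{n_k-1}\|z_k^{i+1}-z_k^i\|_Z^2$ for $g_k(z)=h(u_k,z)+R(z)$; adding the common terms $f^\alpha(u_k)+\mathcal{I}_{U_{ad}}(u_k)$ to both sides converts this into $J(x_k)\ge J(u_k,z_{k+1})+\tfrac{\sigma_1}{2}\sum_i\|z_k^{i+1}-z_k^i\|_Z^2$. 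Finally, applying \eqref{eq:gen_descent_local_1} with $u=u_k$, where the term $\tfrac{\lambda_k+L_2}{2}\|u-u_k\|^2$ vanishes, gives $J(u_k,z_{k+1})\ge J(x_{k+1})+\tfrac{\lambda_k-L_2}{2}\|u_{k+1}-u_k\|^2$, and substitution delivers the $z$-contribution recorded in \eqref{eq:gen_descent_local_2}.

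The only genuinely delicate point is the bookkeeping around the model function: one invokes the lower bound $f\ge g-\tfrac{L_2}{2}\|\cdot\|^2$ in two places, at $u$ and at $u_{k+1}$, playing opposite roles, and must carefully track how the proximal term $\tfrac{\lambda_k}{2}\|\cdot\|^2$ splits into the gain $\tfrac{\lambda_k-L_2}{2}\|u_{k+1}-u_k\|^2$ and the loss $\tfrac{\lambda_k+L_2}{2}\|u-u_k\|^2$. Everything else — the cancellation of the $R(z_{k+1})$ and indicator terms, the telescoping of the inner descent, and the fact that convexity of $h(\cdot,z_{k+1})$ keeps $\Phi_k$ well behaved — is routine under \autoref{assumptions_general} and \autoref{assumption_z_step}.
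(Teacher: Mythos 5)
Your proposal is correct and follows essentially the same route as the paper's own proof: the two-sided model bound \eqref{eq:fund_inequality_approx_inequa} applied once at $u$ and once at $u_{k+1}$, the global minimality of $u_{k+1}$ for the $u$-subproblem to swap $\Phi_k(u)$ for $\Phi_k(u_{k+1})$, and then, for \eqref{eq:gen_descent_local_2}, telescoping the inner descent property \eqref{eq:descent_z} and plugging in $u=u_k$ into \eqref{eq:gen_descent_local_1}. The only (harmless) discrepancy is the constant in front of the $z$-sum: your telescoping yields $\tfrac{\sigma_1}{2}\sum_{i}\|z_k^{i+1}-z_k^{i}\|_Z^2$, which is what \autoref{def:descent_sequence} actually gives, whereas the lemma statement and the paper's proof write $\sigma_1$ without the factor $\tfrac12$ — a bookkeeping slip in the paper, not in your argument.
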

\begin{proof}
    As the proof is essentially the same as the proof of the first part of \autoref{thm_global_convergence}, we postpone it to the appendix.
\end{proof}
\noindent
We now show the main theorem of this section, which states that if \cref{alg:algorithm_general} is initialized near a global minimizer $x^*=(u^*,z^*) \in U_{ad} \times Z$ that satisfies \autoref{assumption_local}, then the iterates converge to $x^*$ with linear speed. The proof is inspired by the analysis in \cite{drusvyatskiy2018error}.
\begin{theorem}[Local convergence of \cref{alg:algorithm_general} to global minima] \label{theorem_local_conv}
    Let the sequence $(x_k)_{k\in\mathbb{N}}$ in $H^1_0(\Omega) \times Z$ be generated by \cref{alg:algorithm_general} and let \autoref{assumption_local} holds true at a global minimum point $x^* \in U_{ad} \times Z$ of $J$. Moreover, assume that the accuracies of the inner loop are given by $\eta_k \lesssim k^{-\gamma}$, $\gamma > 1/2 $. Then, there is a radius $r>0$ such that if $x_0 \in B_r(x^*)$, the following statements hold true:
    \begin{itemize}
    \item[(i)] The function values converge linearly, i.e., there is $Q_1 \in (0,1)$ such that 
    \begin{equation}
        J(x_{k+1}) - J(x^*) \leq Q_1 \left( J(x_{k}) - J(x^*)  \right) \quad \text{for all $k \in \mathbb{N}$}.
    \end{equation}
    \item[(ii)] The iterates converge at a linear rate, i.e. there exists a $C>0$ and a $Q_2\in (0,1)$ such that
    \begin{equation}
        \|x_{k} - \tilde{x}\|_{X} \leq C  Q_2^k \left( J(x_0) - J(x^*) \right), \quad \text{for all $k \in \mathbb{N}$},\label{eq:global_theorem_statement_2}
    \end{equation}
    where $\tilde{x} \in B_r(x_0)$ such that $J(\tilde{x}) = J(x^*)$.
    \end{itemize}
\end{theorem}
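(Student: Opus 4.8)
The plan is to run an error-bound-plus-sufficient-decrease argument in the spirit of \cite{drusvyatskiy2018error}: once the iterates are confined to a small basin around $x^*$, I will show that the gap $a_k := J(x_k) - J(x^*)$ obeys a one-step inequality of the form $a_{k+1} \le C\,(a_k - a_{k+1}) + o(a_k)$, which rearranges to the linear contraction of part (i); part (ii) then follows from summability of the increments. The two workhorses are the descent estimates of \autoref{lemma_general_descent} and the local solution map $S$ of \autoref{lemma_ift}. Writing $\phi(z) := J(S(z),z)$ and noting $\phi(z^*) = J(x^*)$, I first combine \eqref{eq:gen_descent_local_1} at the choice $u = S(z_{k+1})$ with \eqref{eq:ift_1} to get the decomposition
\[
a_{k+1} \le \big(\phi(z_{k+1}) - \phi(z^*)\big) + c_1 \|u_{k+1} - u_k\|_{H_0^1(\Omega)}^2 ,
\]
so the $u$-contribution is already controlled by the squared $u$-step, which \eqref{eq:gen_descent_local_2} bounds by $a_k - a_{k+1}$.

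The heart of the argument is the reduced gap $\phi(z_{k+1}) - \phi(z^*)$. Abbreviating $g(z) = h(u^*,z) + R(z)$ and $G_k := g(z_k) - g(z^*)$, the minimizing property $\phi(z)\le J(u^*,z) = f^\alpha(u^*) + g(z)$ gives $\phi(z_{k+1}) - \phi(z^*) \le G_{k+1}$, and \eqref{eq:ift_2} sharpens this to $\phi(z_{k+1}) - \phi(z^*) + \tfrac{\kappa}{8}\|S(z_{k+1}) - u^*\|_{H_0^1(\Omega)}^2 \le G_{k+1}$, which also bounds $\|S(z_{k+1}) - u^*\|$ by $\sqrt{G_{k+1}}$. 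I then apply the KL inequality (C3) of \autoref{assumption_local}, $G_{k+1} \le C\,\dist(0,\partial g(z_{k+1}))^q$. Since $z_{k+1}$ is only near-stationary for $g_k = h(u_k,\cdot)+R$, the gradient inequality \eqref{eq:gradient_inequality_z} bounds its slope for $g_k$ by $\sigma_2$ times the \emph{last} inner increment $\|z_{k+1}-z_k^{n_k-1}\|_Z$, and the Lipschitz property (H3) upgrades this to $\dist(0,\partial g(z_{k+1})) \lesssim \|z_{k+1}-z_k^{n_k-1}\|_Z + \|u_k - u^*\|_{H_0^1(\Omega)}$. Crucially, this last increment is one of the terms in the decrease sum of \eqref{eq:gen_descent_local_2}, so it too is paid for by $a_k - a_{k+1}$; this is why the tolerance $\eta_k$ enters only through finite termination of the inner loop (cf. \autoref{remark_complexity_z_algorithm}) rather than as a standalone additive error.

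Because the KL exponent corresponds to $q > 2$, raising these first-order quantities to the power $q$ produces terms of order strictly higher than $a_k - a_{k+1}$ and than $a_k$; using the quadratic growth of \autoref{lemma_quadratic_growth} to bound $\|u_k - S(z_k)\|_{H_0^1(\Omega)}^2 \lesssim a_k$ and \eqref{eq:ift_2} to bound $\|S(z_k) - u^*\|_{H_0^1(\Omega)}^2 \lesssim G_k$, everything then collapses into $a_{k+1} \le C(a_k - a_{k+1}) + o(a_k)$ for $a_k$ small. The main obstacle is the coupling between $a_k$ and $G_k$, which a priori live on different scales: one must show they agree up to higher order. I expect to expand $a_k - G_k = [f^\alpha(u_k) - f^\alpha(u^*)] + [h(u_k,z_k) - h(u^*,z_k)]$ and exploit the first-order optimality of $(u^*,z^*)$ — the linear term pairs $\nabla f^\alpha(u^*) + \nabla_u h(u^*,z^*)$ against $u_k - u^*$ and is nonpositive by the normal-cone condition at $u^*\in U_{ad}$ — to reduce $|a_k - G_k|$ to second order in $\|u_k-u^*\|$ and $\|z_k-z^*\|$. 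Establishing this, together with the inductive \emph{trapping} argument that keeps $(u_k,z_k)$ inside the common validity ball of \autoref{lemma_quadratic_growth}, \autoref{lemma_ift} and (C3) via the finite-length estimate coming from the desingularizing function, is the most delicate part.

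Finally, for (ii): once (i) yields $a_k \le Q_1^k a_0$, the decrease \eqref{eq:gen_descent_local_2} gives $\|u_{k+1}-u_k\|_{H_0^1(\Omega)} \lesssim \sqrt{a_k - a_{k+1}} \lesssim Q_1^{k/2}$, while summing the inner increments by Cauchy–Schwarz together with the complexity bound $n_k \lesssim (a_k - a_{k+1})\,\eta_k^{-2}$ of \autoref{remark_complexity_z_algorithm} yields a geometric-times-polynomial, hence summable, bound for $\|z_{k+1}-z_k\|_Z$ (here the schedule $\eta_k \lesssim k^{-\gamma}$ with $\gamma>1/2$ keeps $n_k$ polynomially controlled and ties back to the square-summability used in the global analysis). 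Consequently $\sum_k \|x_{k+1}-x_k\|_X < \infty$, the sequence is Cauchy and converges to some $\tilde{x}$ at the stated linear rate, and continuity of $J$ with $a_k \to 0$ forces $J(\tilde{x}) = J(x^*)$.
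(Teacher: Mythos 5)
Your proposal follows the same skeleton as the paper's proof: the decomposition $a_{k+1} \le \bigl(\phi(z_{k+1})-\phi(z^*)\bigr) + C\,\|u_{k+1}-u_k\|^2_{H_0^1(\Omega)}$ (the paper's \eqref{eq:thm_local_eq_0}), the KL inequality (C3) applied to $g = h(u^*,\cdot)+R$ at $z_{k+1}$, the slope transfer via \eqref{eq:gradient_inequality_z} and (H3), payment of all increments by $a_k-a_{k+1}$ through \eqref{eq:gen_descent_local_2}, and your part (ii) argument (Cauchy property from $n_k \lesssim k^{2\gamma}$ and geometric-times-polynomial summation) is exactly the paper's Step III. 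The divergence — and the genuine gap — lies in how the distance-to-$u^*$ term created by the KL step is eliminated.

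The paper keeps that term at the \emph{current} iteration, $\|S(z_{k+1})-u^*\|^q$, and removes it by an absorption trick: by \eqref{eq:ift_2} and Lipschitz continuity of $S$ one has $\|S(z_{k+1})-u^*\|^q \le L_S^{q-2}\|z_{k+1}-z^*\|^{q-2}\cdot\frac{8}{\kappa}\bigl(J(u^*,z_{k+1})-J(S(z_{k+1}),z_{k+1})\bigr)$, and after shrinking the neighborhood so that the prefactor is $\le 1$, the quantity $J(u^*,z_{k+1})$ cancels from both sides of \eqref{eq:thm_local_eq_1}, leaving $\phi(z_{k+1})-\phi(z^*) \lesssim (a_k-a_{k+1})^{q/2}$ with no cross-iteration quantity at all. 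You instead split through $S(z_k)$ and inherit $G_k^{q/2}$ from the \emph{previous} iteration, proposing to close the loop by proving $|a_k-G_k|$ is of second order via Taylor expansion and the normal cone at $u^*$. This step fails. First, the normal-cone condition gives $\langle \nabla f^\alpha(u^*)+\nabla_u h(u^*,z^*),\,u_k-u^*\rangle \ge 0$ (nonnegative, not nonpositive as you state), hence only the one-sided bound $G_k \le a_k + O\bigl(\|z_k-z^*\|_Z\,\|u_k-u^*\|_{H_0^1(\Omega)} + \|u_k-u^*\|^2_{H_0^1(\Omega)}\bigr)$; the difference $|a_k-G_k|$ is genuinely first order whenever constraints are active. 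Second, and fatally, that remainder is not $o(a_k)$: $\|u_k-u^*\|^2$ can itself only be estimated by $a_k+G_k$ (via \autoref{lemma_quadratic_growth} and \eqref{eq:ift_2}), which is circular, and $\|z_k-z^*\|_Z$ is controlled by no function-value gap whatsoever — (C3) with $q>2$ is a flatness condition and supplies no growth or error bound in $z$, so inside the trapping ball one only knows $\|z_k-z^*\|_Z\le r$, and Young's inequality then leaves a non-decaying additive constant. Consequently $G_k^{q/2}=o(a_k)$ is unsubstantiated, and the one-step inequality $a_{k+1}\le C(a_k-a_{k+1})+o(a_k)$ — the core of part (i) — is not established. (Your route could in principle be repaired by carrying $G_k$ as a second state variable and closing a coupled two-dimensional recursion from $G_{k+1}\lesssim (a_k-a_{k+1})^{q/2}+a_k^{q/2}+G_k^{q/2}$ together with $G_k^{q/2}\le\delta^{q/2-1}G_k$ for small $\delta$, but the clean fix is the paper's absorption. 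Note also that the inductive trapping argument — the paper's Step II, which fixes the radius $r$ and licenses applying the contraction at every $k$ — is only announced in your proposal, not carried out.)
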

\begin{proof} We split the proof into three parts. \\[0.2cm]
Step I: There is an $\varepsilon>0$ and $Q_1 \in (0,1)$ independent of $k \in \mathbb{N}$ such that $\|x_{k+1} - x^*\|_X < \varepsilon$ implies 
\begin{equation}
        J(x_{k+1}) - J(x^*) \leq Q_1 \left( J(x_{k}) - J(x^*)  \right).\label{eq:thm_local_step_1}
\end{equation}
To prove \eqref{eq:thm_local_step_1} let us fix $\varepsilon_1 > 0$ according to \autoref{lemma_ift} such that $z \mapsto S(z) \in H_0^1(\Omega)$ is well defined and single valued for all $z \in B_{\varepsilon_1}(z^*)$, such that the inequalities \eqref{eq:ift_1}, \eqref{eq:ift_2} hold true for $\| x_{k+1} - x^*\| \leq \varepsilon_1$. An application of the general descent \autoref{lemma_general_descent} with $u = S(z_{k+1})$ yields
\begin{equation}
    J(S(z_{k+1}),z_{k+1}) \geq J(u_{k+1},z_{k+1})
    + \left( \frac{\lambda_k - L_2}{2} \right) \| u_{k+1} - u_{k} \|^2_{H_0^1(\Omega)} 
     - \left( \frac{\lambda_k + L_2}{2}\right) \|S(z_{k+1}) - u_k\|^2_{H_0^1(\Omega)}. \notag
\end{equation}
Rearranging and using \eqref{eq:ift_1}, we obtain from the latter inequality 
\begin{align}
    J(u_{k+1},z_{k+1}) - J(S(z_{k+1}),z_{k+1}) 
    &\leq \| u_{k+1} - u_k\|^2_{H_0^1(\Omega)} \left( \left( \frac{\lambda_k+L_2}{2} \right) C_1^2  - \left( \frac{\lambda_k - L_2}{2}\right)\right)  \notag\\
    &=:C_{L,\lambda_k} \| u_{k+1} - u_k\|^2_{H_0^1(\Omega)}, \label{eq:thm_local_eq_0}
\end{align}
where $C_1>0$ is the constant in \eqref{eq:ift_1}. We have $J(u^*,z_{k+1}) \geq  J(x^*)$ by the minimization property. Hence, by KL-inequality with exponent $q>2$ and continuity of $J(\cdot)$ at $x^*$, we find $ 0 < \varepsilon_2 $ and $C_2>0$ with $J(x^*) \leq  J(u^*,z_{k+1})  < J(x^*) + \eta$ and
\begin{align*}
    J(u^*,z_{k+1})  - J(x^*) 
    &\leq C_2
      \dist(0, \partial_z J(u^*,z_{k+1}))^q,
\end{align*}
whenever $\| x_{k+1} - x^* \|_Z \leq \varepsilon_2$.
An application of the triangle inequality yields 
\begin{align*}
    J(u^*,z_{k+1})  - J(x^*)
     &\leq C_{3} 
     (
     \dist(0, \partial_z J(u_{k},z_{k+1})
     +  \|u^* - S(z_{k+1})\|_{H_0^1(\Omega)}  \\
     & \quad + \| S(z_{k+1})  - u_{k}\|_{H_0^1(\Omega)} 
     )^q ,
 \end{align*}
 for a constant $C_3>0$ that involves the global Lipschitz constant of $\nabla_z h(\cdot,z_{k+1})$. Recall from \eqref{eq:gradient_inequality_z}, that  $\dist(0, \partial_z J(u_{k},z_{k+1})) \leq \sigma_2 \| z_{k}^{n_k} - z_{k}^{n_k-1} \|_Z$. Moreover, by \eqref{eq:ift_1} and also by  using $a_1+a_2+a_3  \leq \sqrt{3} (a_1+a_2+a_3)^{1/2}$ for $a_i\geq 0$ we estimate 
 \begin{align*}
     J(u^*,z_{k+1})  - J(x^*)
     &\leq
     C_{3}\sqrt{3}
     \Big( \sigma_2^2  \sum_{i=0}^{n_k-1}\| z_{k}^{i+1} - z_{k}^{i} \|_Z^2
     +  C_1^2 \|u_{k+1} - u_{k}\|^2_{H_0^1(\Omega)}  \\
     &\quad   +  \|S(z_{k+1}) - u^*\|^2_{H_0^1(\Omega)}
     \Big)^\frac{q}{2}.
\end{align*}
Invoking the convexity of $x \mapsto x^{q/2}$ as $q>2$ and summarizing all constants, we infer the existence of a constant $C_4>0$ such that
\begin{align*}
    J(u^*,z_{k+1})  - J(x^*)
     &\leq 
     C_4
     \left(
     \sum_{i=0}^{n_k-1} \|z_{k}^{i+1} -  z_{k}^{i} \|^2_Z + \|u_{k+1} - u_{k}\|_{H_0^1(\Omega)}^2 \right)^{\frac{q}{2}} + 
      C_4   \|S(z_{k+1}) - u^*\|_{H_0^1(\Omega)}^{q}.    \notag 
\end{align*}
By the descent inequality \eqref{eq:gen_descent_local_2} and using the Lipschitz constant $L_S>0$ of $S$ from \autoref{lemma_ift},  we deduce with a possibly enlarged $C_4>0$ that
\begin{align}    
J(u^*,z_{k+1})  - J(x^*)
     &\leq 
     C_{4}
     \left(
     J(x_{k}) - J(x_{k+1})
     \right)^{\frac{q}{2}} +  C_4  \|S(z_{k+1}) - u^*\|_{H_0^1(\Omega)}^q \notag\\
     &\leq C_{4}
     \left(
     J(x_{k}) - J(x_{k+1})
     \right)^{\frac{q}{2}} \notag \\
     &\quad + C_4 L_S^{q-2} \| z_{k+1} - z^*\|^{q-2} (J(u^*, z_{k+1}) - J(S(z_{k+1}), z_{k+1}) ),
      \label{eq:thm_local_eq_1}
\end{align}
where we also used \eqref{eq:ift_2}. Shrinking $\varepsilon_2$ such that also 
 $C_4 L_S^{q-2} \| z_{k+1} - z^*\|^{q-2} \leq 1$ for all $\|z_{k+1} - z^*\|_Z <\varepsilon_2$, we obtain 
from \eqref{eq:thm_local_eq_1}:
\begin{equation}    
J(S(z_{k+1}),z_{k+1}) - J(x^*) 
\leq  C_4 (
     J(x_{k}) - J(x_{k+1}))^{\frac{q}{2}} \leq C_5 (J(x_{k}) - J(x_{k+1})),\label{eq:thm_local_eq_4} 
\end{equation}
for some constant $C_5>0$ using also $q>2$. Now adding \eqref{eq:thm_local_eq_0} and \eqref{eq:thm_local_eq_4}, we obtain 
\begin{equation}
    J(x_{k+1}) - J(x^*) \leq
    C_{L,\lambda_k} \| u_{k+1} - u_k\|^2_{H_0^1(\Omega)} + 
    C_5
    \left(
     J(x_{k}) - J(x_{k+1})
     \right). \notag
\end{equation}
 Again invoking  the descent inequality \eqref{eq:gen_descent_local_2} for the first summand, we find a generic constant $C>0$, such that 
\begin{equation}
    J(x_{k+1}) - J(x^*) \leq
    C
    \left(
     J(x_{k}) - J(x_{k+1})
     \right). \label{eq:thm_local_eq_5}
\end{equation}
 Inequality \eqref{eq:thm_local_eq_5} can be equivalently rephrased as \eqref{eq:thm_local_step_1} with $Q_1 = C/(1+C)$. \\[0.2cm] 
Step II: Show, that there is an $0 < r \leq \varepsilon$, where $\varepsilon>0$ from Step I, such that $x_0 \in B_r(x^*)$ implies $x_k \in B_r(x^*)$ for all $k \in \mathbb{N}$. \\[0.2cm] 
We start by choosing $r := \varepsilon>0$, with $\varepsilon>0$ from Step I.  We assume that $z_j \in B_r(z^*)$ for $j = 0,\ldots,k$ for some $k \in \mathbb{N}$. We then set $\varphi(t) := t^{1/2}$ and deduce by concavity:
\begin{equation}
    \varphi( J(x_j) - J(x^*) ) - \varphi( J(x_{j+1}) - J(x^*) ) \geq \varphi'( J(x_j) - J(x^*) )(J(x_j) - J(x_{j+1})). \label{eq:thm_local_eq_8}
\end{equation}
Note that this is defined, as $x^*$ is a global minimum. As  $x_j \in B_r(x^*)$ for $j=1,\ldots,k$, we obtain with help of step I,  \eqref{eq:thm_local_step_1}:
\begin{align}
    \varphi'( J(x_j) - J(x^*) ) 
    &= \frac{1}{2(J(x_j) - J(x^*))^{\frac{1}{2}}} 
    \geq  
    \frac{1}{2Q_1^\frac{j}{2}(J(x_{0}) - J(x^*))^{\frac{1}{2}}},\label{eq:thm_local_eq_9}
\end{align} 
From the descent inequality \eqref{eq:gen_descent_local_2} we infer that there exists a constant $C_6>0$ with
\begin{align}
    J(x_j) - J(x_{j+1}) 
    &\geq C_6
    \|u_{j+1} - u_j \|^2_{H_0^1(\Omega)} + C_6 \sum_{i=0}^{n_j-1}\|z_j^{i+1} - z_j^{i} \|_Z^2 \notag\\
    &\geq C_6
    \|u_{j+1} - u_j \|^2_{H_0^1(\Omega)} + \frac{C_6}{n_j} \|z_{j+1} - z_j \|_Z^2 \notag\\
    &\geq
    \frac{C_6}{n_j}\left( \|u_{j+1} - u_j \|^2_{H_0^1(\Omega)} + \|z_{j+1} - z_{j}\|^2_Z \right) = \frac{C_6}{n_j} \|x_{j+1} - x_j \|^2_X. \label{eq:thm_local_eq_10}
\end{align}
Then we conclude from \eqref{eq:thm_local_eq_8}, \eqref{eq:thm_local_eq_9} and \eqref{eq:thm_local_eq_10} that 
\begin{align}
    \frac{C_6}{n_j} \|x_{j+1} - x_j \|^2_X 
    \leq 2Q_1^{\frac{j}{2}}(J(x_{0}) - J(x^*))^{\frac{1}{2}} \left(
    \varphi( J(x_j) - J(x^*) ) - \varphi( J(x_{j+1}) - J(x^*) ) \right),
\end{align}
and henceforth
\begin{align}
     \|x_{j+1} - x_j \|_X
    &\lesssim
    \sqrt{n_j} Q_1^{\frac{j}{4}}(J(x_{0}) - J(x^*))^{\frac{1}{4}} \left(
    \varphi( J(x_j) - J(x^*) ) - \varphi( J(x_{j+1}) - J(x^*) ) \right)^{\frac{1}{2}} \notag \\
    &\leq 
    2 n_j Q_1^{\frac{j}{2}}(J(x_{0}) - J(x^*))^{\frac{1}{2}}  + 2 
    \left(
    \varphi( J(x_j) - J(x^*) ) - \varphi( J(x_{j+1}) - J(x^*) ) \right) \notag \\
    &\lesssim 
     2 j^{2\gamma} Q_1^{\frac{j}{2}}(J(x_{0}) - J(x^*))^{\frac{1}{2}}  + 2\left(
    \varphi( J(x_j) - J(x^*)) - \varphi( J(x_{j+1}) - J(x^*) ) \right), \label{eq:thm_local_eq_11}
\end{align}
where we used Young's inequality in the second step and \autoref{remark_complexity_z_algorithm} for the estimate of $n_j \lesssim j^{2\gamma}$ in the last.  We deduce by summing from $j =0,\ldots,k$ and using \eqref{eq:thm_local_eq_11} that
\begin{align*}
    \| x_{k+1} - x^* \|_Z 
    &\leq
    \| x_{k+1} - x_{0} \|_X + \| z_{0} - z^* \|_Z   
    \leq
    \sum_{j=0}^{k}
    \| x_{j+1} - x_{j} \|_X +\| z_{0} - z^* \|_Z\\
    &\lesssim (J(x_{0}) - J(x^*))^{\frac{1}{2}} 
    \sum_{j=0}^{k}
    j^{2\gamma} Q_1^{\frac{j}{2}}   
    + 2 \sum_{j=0}^{k} \left( \varphi( J(x_{j}) - J(x^*) ) - \varphi( J(x_{j+1}) - J(x^*) ) \right)  + \| z_{0} - z^* \|_Z \\
    &\lesssim (J(x_{0}) - J(x^*))^{\frac{1}{2}} 
    \sum_{j=0}^{ \infty}
    j^{2\gamma} Q_1^{\frac{j}{2}}  
    + \varphi( J(x_{0}) - J(x^*))   + \| z_{0} - z^* \|_Z,
\end{align*}
where  $ \sum_{j=0}^{\infty}
    j^{2\alpha} Q_1^{j} < + \infty$. Let $C_7>0$ denote the constant hidden in  $"\lesssim"$, and if 
\begin{equation}
   C_7  (J(x_{0}) - J(x^*))^{\frac{1}{2}}  
    \sum_{j=0}^{+ \infty}
    j^{2\gamma} Q_1^{\frac{j}{2}} + C_7 \varphi( J(x_{0}) - J(x^*) )  + C_7 \| z_{0} - z^* \|_Z < r, \label{eq:thm_local_eq_11_new}
\end{equation}
then also $z_{k+1} \in B_r(z^*)$,
which is possible by choosing a smaller $r>0$ due to continuity of $J$.
\\[0.3cm]
Step III: 
Using the results from steps I and II, the linear convergence of function values in (i) follows immediately. Now, let us prove the strong convergence of the sequence $(x_k)_{k\in\mathbb{N}}$. By Step I and Step II, we have that $z_k \in B_r(z^*)$  for all $k \geq 0$ and therefore for arbitrary $l \in \mathbb{N}$ 
\begin{align} 
    \|x_{k+l} - x_{k} \|_X^2   
    &\leq 
    \sum_{j=k}^{k+l} n_j \left( \sum_{i=0}^{n_{j}-1}  \|z_{j}^{i+1} - z_{j}^{i} \|_Z^2 + \|u_{j+1} - u_{j}\|_{H_0^1(\Omega)}^2 \right) \notag\\
    &\leq
    \sum_{j=k}^{k+l}n_j \left(J(x_j) - J(x^*)\right) \notag\\
    &\lesssim \left(J(x_{0}) - J(x^*)\right)
    \sum_{j=k}^{k+l}j^{2\gamma} Q_1^{j}  \notag \\
    &= \left(J(x_{0}) - J(x^*)\right) Q_1^{k}
    \sum_{j=0}^{l}(j + k)^{2\gamma} Q_1^{j} \notag \\
    & \lesssim \left(J(x_{0}) - J(x^*)\right) Q_1^{\frac{k}{2}} Q_1^{\frac{k}{2}} k^{2\gamma}
    \sum_{j=0}^{+\infty}(j + 1)^{2\gamma} Q_1^{j} \to 0 \quad \text{as $k \to \infty$}, \label{eq:linear_conv_local}
\end{align}
where we used again the definition of the accuracy $\eta_k \lesssim k^{-\gamma} $. Hence $(x_k)_{k \in \mathbb{N}}$ is a Cauchy sequence in $H_0^1(\Omega) \times Z$ and converges to some element $\tilde{x} \in U_{ad} \times Z$ strongly. By continuity of $J$ we deduce $J(\tilde{x}) = J(x^*)$. The linear convergence in (iii) follows from \eqref{eq:linear_conv_local}, by sending $l \to \infty$ and using $C = \sup_k Q_1^{\frac{k}{2}} k^{2\gamma}
    \sum_{j=0}^{+\infty}(j + 1)^{2\gamma} Q_1^{j}$ and $Q_2 = Q_1^{1/2}$ in \eqref{eq:global_theorem_statement_2}.
\end{proof}
\begin{remark}
Let us briefly comment on the implications of \autoref{theorem_local_conv}, which suggest avenues for future research and potential algorithmic improvements. The assumption of strong monotonicity of the Hessian of $f^\alpha$, as stated in \autoref{assumption_local}, is classical yet restrictive, though it is a common requirement in second-order and Gauss–Newton methods to guarantee fast local convergence \cite{bertsekasnonlinear,nocedal1999numerical}. 
In the absence of the learning term (i.e., when $h(u, z) + R_2(z) = 0$), local linear convergence is ensured by results such as those in \cite{drusvyatskiy2018error}. However, when the learning term is included, maintaining fast convergence necessitates a larger KL exponent $q > 2$. Practically, the use of alternative discrepancy terms $h(\cdot, \cdot)$ that exhibit sharper local curvature could enhance convergence, although exploring this lies beyond the scope of the present work.
\end{remark}

\section{Regularization and basic stability properties} \label{sec:regularization}
Since data is never perfect, a key question is: in what sense do errors in the input data propagate to the output? This is commonly referred to as data stability. Another important question is whether the expected solution can be recovered under vanishing noise level, $\delta \to 0_+$. Without aiming to provide a complete answer to these questions, we argue that the proposed method yields a stable regularization scheme under rather strong assumptions. A more detailed analysis is deferred to future work. For a comprehensive account on these topics in the setting of imaging we refer to \cite{scherzer2009variational}.
We also note that even the class of bilinear inverse problems—such as blind deconvolution \cite{burger2001regularization} and classical linear dictionary learning \cite{ravishankar2010mr,ravishankar2013sparsifying,ravishankar2015efficient}— as well as the general topic of regularization via unsupervised learning \cite{tachella2022unsupervised,tachella2023sensing}, form an active and intriguing area of research and are not fully understood, even when dealing with linear inverse problems. Thus, we do not attempt a detailed discussion of these topics here.
The following two theorems represent a first step toward showing that the methodology presented here yields a convergent and stable regularization scheme. For these results recall the functional from \eqref{eq:p0}:
\begin{align}
J(x;f,\alpha,\lambda,\beta) :=
\frac{1}{2} \| F(u) - f^\delta \|^2_{L^2(\Omega,\mathbb{C}^L)} + \frac{\alpha}{2} \|\nabla u \|^2_{L^2(\Omega)} +  \frac{\lambda} {2} \left( \| P \mathbb{D}^\mathfrak{h} u - D C \|_F^2 +  \beta \|C\|_1  \right)
\label{eq:def_functional_reg}
\end{align}
for $x = (u,D,C)$. Also recall the definition of the Moreau--Yosida approximation of a convex function $g:H \to \mathbb{R}$ defined on a Hilbert space $H$, cf. \cite{attouch2014variational}:
\begin{equation}
g_\gamma(u) := \min_{v \in X} \frac{1}{2\gamma}\| u - v \|^2_2 + g(v). \label{eq:def:moreau_yosida}\end{equation}
It is a smooth approximation of $g$,  which is also referred to as Huber regularization (see \cite{huber1992robust}) in the context of $g = \|\cdot \|_1$. In the latter case we will write $\|\cdot\|_{1,\gamma}$ for the Moreau Yosida regularized $\ell_1$ norm. Then the following theorem holds true:
\begin{theorem}[Regularization]\label{thm:regularization_thm}
Consider Problem \eqref{eq:p0} in space dimension $d=2$ and a vanishing noise level, i.e., $\delta_n \to 0_+$. Let ${x_n=(u_n, D_n, C_n)_n \subset U_{ad} \times \mathcal{D} \times \mathcal{C}}$ be global solutions of the optimization problem $\eqref{eq:p0}$:
\begin{equation}
    (u_n,D_n,C_n) \in \argmin_{x=(u, D, C) \in U_{ad} \times \mathcal{D} \times \mathcal{C}} J(x;f^{\delta_n},\alpha_n,\lambda_n,\beta) \quad \forall \, n \in \mathbb{N}
\end{equation}
for data $f^{\delta_n}$ with  regularization parameters $\alpha_n, \lambda_n > 0$ and constant $\beta>0$. Suppose the sequence of regularization parameters satisfies
\begin{equation} 
\lambda_n = \kappa\alpha_n, \quad \alpha_n \to 0, \quad \frac{\delta_n^2}{\alpha_n} \to 0 \quad \text{as } n \to \infty, \end{equation} 
for some $\kappa > 0$. Then, after possibly passing to a subsequence, we have $u_n \rightharpoonup u^* \in H^1(\Omega)$ and $D_n \to D^*$, where $(u^*, D^*)$ is a solution to the problem \begin{equation}
\min_{D \in \mathcal{D}, u \in U_{ad}} \frac{1}{2}\|\nabla u \|^2_{L^2(\Omega)} + \kappa \beta \| D^T \mathbb{D}^\mathfrak{h} u \|_{1,\beta} \quad \text{s.t. } \, F(u) = F(u_{true}). \label{eq:minimum_norm_problem} 
\end{equation}
\end{theorem}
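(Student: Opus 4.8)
The plan is to run the classical direct-method argument for variational regularization (in the spirit of \cite{scherzer2009variational}), after first eliminating the coefficient variable $C$. \textbf{Reduction step.} Every global minimizer $(u_n,D_n,C_n)$ has $C_n$ optimal for the pair $(u_n,D_n)$, so $(u_n,D_n)$ minimizes the reduced functional obtained by solving the inner $C$-problem in closed form. Since $D_n\in O_K$ is orthogonal, $\|P\mathbb{D}^\mathfrak{h}u-DC\|_F^2=\|D^\top P\mathbb{D}^\mathfrak{h}u-C\|_F^2$, and minimizing $\tfrac12\|D^\top P\mathbb{D}^\mathfrak{h}u-C\|_F^2+\tfrac{\beta}{2}\|C\|_1$ over $C$ is, by the definition of the Moreau--Yosida envelope \eqref{eq:def:moreau_yosida}, exactly a Huber term in $D^\top P\mathbb{D}^\mathfrak{h}u$. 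Inserting $\lambda_n=\kappa\alpha_n$, the minimization of $J$ over $(u,D,C)$ collapses to minimizing over $(u,D)\in U_{ad}\times O_K$ the functional
\[ \tilde J_n(u,D)=\tfrac12\|F(u)-f^{\delta_n}\|_{L^2}^2+\tfrac{\alpha_n}{2}\|\nabla u\|_{L^2}^2+\kappa\alpha_n\beta\,\|D^\top P\mathbb{D}^\mathfrak{h}u\|_{1,\beta}, \]
where the precise Huber parameter and prefactor follow from \eqref{eq:def:moreau_yosida}. This is exactly the structure of the regularizer in \eqref{eq:minimum_norm_problem} (with the patch operator $P$ kept explicit).

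\textbf{A priori bounds.} I would test minimality of $(u_n,D_n)$ for $\tilde J_n$ against an arbitrary feasible competitor $(\tilde u,\tilde D)$ with $F(\tilde u)=F(u_{true})$ (such pairs exist, e.g.\ $\tilde u=u_{true}$, $\tilde D\in O_K$ arbitrary). Because $\|F(\tilde u)-f^{\delta_n}\|_{L^2}^2=\|\eta_n\|^2\le\delta_n^2$, dividing $\tilde J_n(u_n,D_n)\le\tilde J_n(\tilde u,\tilde D)$ by $\alpha_n$ and dropping the nonnegative data term on the left gives
\[ \tfrac12\|\nabla u_n\|_{L^2}^2+\kappa\beta\,\|D_n^\top P\mathbb{D}^\mathfrak{h}u_n\|_{1,\beta}\le\tfrac{\delta_n^2}{2\alpha_n}+\tfrac12\|\nabla\tilde u\|_{L^2}^2+\kappa\beta\,\|\tilde D^\top P\mathbb{D}^\mathfrak{h}\tilde u\|_{1,\beta}. \]
Since $\delta_n^2/\alpha_n\to0$ the right-hand side is bounded, so $\|\nabla u_n\|_{L^2}$ is bounded and, with the box constraints of $U_{ad}$ and the Poincaré inequality, $(u_n)$ is bounded in $H_0^1(\Omega)$. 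Retaining the data term instead yields $\|F(u_n)-f^{\delta_n}\|_{L^2}^2=\mathcal{O}(\alpha_n)\to0$, and $D_n$ lives in the compact set $O_K$.

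\textbf{Limits and feasibility.} Passing to a subsequence, $u_n\rightharpoonup u^*$ in $H_0^1(\Omega)$, and by the compact embedding $H_0^1(\Omega)\hookrightarrow L^p(\Omega)$ (valid for every finite $p$ when $d=2$, cf.\ \autoref{theorem_properties_solution_operator}) also $u_n\to u^*$ strongly in $L^p(\Omega)$; as $U_{ad}$ is closed under this convergence, $u^*\in U_{ad}$. Compactness of $O_K$ gives $D_n\to D^*\in O_K$. Continuity of $F:L^p(\Omega)\to L^2(\Omega)$ then yields $F(u_n)\to F(u^*)$, while $\|F(u_n)-f^{\delta_n}\|\to0$ and $\|f^{\delta_n}-F(u_{true})\|\le\delta_n\to0$ force $F(u^*)=F(u_{true})$, so $(u^*,D^*)$ is feasible for \eqref{eq:minimum_norm_problem}.

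\textbf{Optimality and the main obstacle.} I would take $\liminf_n$ in the scaled inequality above for a fixed but arbitrary feasible $(\tilde u,\tilde D)$. Weak lower semicontinuity of $u\mapsto\|\nabla u\|_{L^2}^2$ controls the gradient term, and since $\delta_n^2/\alpha_n\to0$ the extra right-hand term vanishes. For the learned regularizer, strong $L^p$-convergence gives $\mathbb{D}^\mathfrak{h}u_n\to\mathbb{D}^\mathfrak{h}u^*$ in the finite-dimensional space $U^N$, and combined with $D_n\to D^*$ the argument $D_n^\top P\mathbb{D}^\mathfrak{h}u_n$ converges strongly; as the Moreau--Yosida envelope is globally Lipschitz, the Huber term converges. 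Hence the limit reads
\[ \tfrac12\|\nabla u^*\|_{L^2}^2+\kappa\beta\,\|D^{*\top}P\mathbb{D}^\mathfrak{h}u^*\|_{1,\beta}\le\tfrac12\|\nabla\tilde u\|_{L^2}^2+\kappa\beta\,\|\tilde D^\top P\mathbb{D}^\mathfrak{h}\tilde u\|_{1,\beta}, \]
which, since $(\tilde u,\tilde D)$ was an arbitrary feasible pair, identifies $(u^*,D^*)$ as a global minimizer of \eqref{eq:minimum_norm_problem}. The crux of the argument is the reduction step together with the continuity of the learned regularizer: one must verify that eliminating $C$ genuinely produces the Huber functional of \eqref{eq:minimum_norm_problem} with the correct dependence of the Huber parameter and prefactor on $\beta$ and $\kappa$, and that this term is \emph{continuous} — not merely weakly lower semicontinuous — under the mixed weak-$H^1$/strong-$O_K$ convergence (this is what closes the $\liminf$ argument, and in fact would even allow upgrading $u_n\rightharpoonup u^*$ to norm convergence). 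The remaining ingredients—Poincaré bound, compact embedding, closedness of $U_{ad}$, and continuity of $F$—are routine in view of \autoref{theorem_existence_general} and \autoref{theorem_properties_solution_operator}.
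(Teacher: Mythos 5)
Your proposal is correct and follows essentially the same route as the paper's proof: eliminate $C$ via orthogonality of $D$ and the Moreau--Yosida envelope, derive a priori bounds by testing global minimality against a feasible competitor and dividing by $\alpha_n$, pass to limits using the compact embedding, compactness of $O_K$ and continuity of $F:L^p(\Omega)\to L^2(\Omega)$, and conclude via weak lower semicontinuity of the gradient term plus continuity of the Huberized dictionary term. The only notable difference is organizational: the paper first establishes existence of a minimum-norm solution $v^\dagger$ of \eqref{eq:minimum_norm_problem} by the direct method and compares the iterates against $v^\dagger$, whereas you compare against arbitrary feasible pairs, which renders that separate existence step unnecessary since existence of a minimizer follows as a byproduct of your limit argument.
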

\begin{proof}
During the proof we denote $v = (u,D)$. The proof uses  techniques as  presented in \cite{scherzer2009variational}.
We first argue that \eqref{eq:minimum_norm_problem} has a solution. Denote $\mathcal{R}(v) := \frac{1}{2}\|\nabla u \|^2_{L^2} + \kappa \beta \|D^T  \mathbb{D}^\mathfrak{h} u \|_{1,\beta}$. Since $\mathcal{R}\geq 0 $, we may pick an infimizing sequence $v_n \in U_{ad} \times \mathcal{D}$. As $\mathcal{D}$ is compact and clearly $\|\nabla u_n \|_{L^2(\Omega)}$ is bounded, we find by standard compact embedding theorems that $u_n \rightharpoonup u^*$ in $H^1(\Omega)$, $u_n \to u^*$ in $L^p(\Omega)$ for some $p >2$ and $D_n \to D^*$ in $\mathcal{D}$ after passing to a suitable subsequence and relabeling. Moreover by the continuity of $F:L^p(\Omega) \to L^2(\Omega)$ we infer $F(u^*)=F(u_{true})$ as all $v_n$ satisfy the constraints. In addition, as $U_{ad}$ is clearly closed in $L^p(\Omega)$, it follows that $(u^*,D^*)$ is a global solution of \eqref{eq:minimum_norm_problem} by classical lower semi-continuity arguments. \\
Let us now prove the convergence result. First note that as $(u_n,D_n,C_n)$ is a global solution for every $n$, we may eliminate $C_n$ from the objective and obtain 
\begin{align*}
    (u_n,D_n) \in \argmin_{(u,D) \in U_{ad} \times \mathcal{D}} J_n(u,D) := \frac{1}{2} \| F(u) - f^{\delta_n} \|_{L^2(\Omega,\mathbb{C}^L)}^2 + \frac{\alpha_n}{2} \| \nabla u \|_{L^2(\Omega)}^2  +  \lambda_n \beta \|D^T \mathbb{D}^\mathfrak{h} u\|_{1,\beta}. 
\end{align*}
for every $n\in \mathbf{N}$. Here, we used the orthogonality of $D_n$ and the definition of the Moreau envelope \eqref{eq:def:moreau_yosida}. We then derive the bounds
\begin{align}
\begin{aligned}
    J_n(v_n) 
    &=
    \frac{1}{2} \| F(u_n) - f^{\delta_n} \|_{L^2(\Omega,\mathbb{C}^L)}^2 + \frac{\alpha_n}{2} \| \nabla u_n \|_{L^2(\Omega)}^2  +  \lambda_n \beta \| D_n^\top P \mathbb{D}^\mathfrak{h}u_n\|_{1,\beta} \\
     &\leq \frac{1}{2} \| F(u^\dagger) - f^{\delta_n} \|_{L^2(\Omega,\mathbb{C}^L)}^2 + \frac{\alpha_n}{2} \| \nabla u^\dagger \|_{L^2(\Omega)}^2  +  \lambda_n \beta \| D^{\dagger,\top} P \mathbb{D}^\mathfrak{h}u^\dagger\|_{1,\beta} \\
     &\leq \frac{1}{2}\delta_n^2 + \alpha_n\left( \frac{1}{2} \| \nabla u^\dagger \|_{L^2(\Omega)}^2  +  \kappa \beta \| D^{\dagger,\top} P \mathbb{D}^\mathfrak{h}u^\dagger\|_{1,\beta}\right) \to 0 \quad \text{as $n \to \infty$}, \label{eq:reg_thm_1}
    \end{aligned}
\end{align}
for every, so called \emph{minimum norm solution}  $v^\dagger$ of \eqref{eq:minimum_norm_problem},  where we used the assumption of the theorem. Henceforth, we obtain by \eqref{eq:reg_thm_1} and triangle inequality 
\begin{align}
    \| F(u_n) - F(u_{true})\| &\leq \|F(u_n) - f^{\delta_n}\| + \delta_n \to 0   \quad  (n \to \infty) \label{eq:reg_thm_2_1}  \\
    \left( \frac{1}{2} \| \nabla u_n \|_{L^2(\Omega)}^2  +  \kappa \beta \| D^\top P \mathbb{D}^\mathfrak{h}u_n\|_{1,\beta}\right) 
    &\leq 
    \frac{\delta_n^2}{2\alpha_n} + 
    \left( \frac{1}{2} \| \nabla u^\dagger \|_{L^2(\Omega)}^2  +  \kappa \beta \| D^{\dagger,\top} P \mathbb{D}^\mathfrak{h}u^\dagger\|_{1,\beta}\right). \label{eq:reg_thm_2_2}
\end{align}
As before, we conclude from \eqref{eq:reg_thm_1} that $\|u_n\|_{H_0^1(\Omega)}$ is bounded. By compactness of $\mathcal{D}$, after passing to a subsequence, using suitable embedding theorems for dimension $d=2$ we may assume that $u_n \to \tilde{u}$ in $L^p(\Omega)$ for some $p>2$, $u_n \rightharpoonup \tilde{u}$ in $H^1(\Omega)$ and $D_n \to \tilde{D}$ for some $\tilde{v}:=(\tilde{u},\tilde{D}) \in U_{ad} \times \mathcal{D}$, where we also used the $L^p(\Omega)$ closedness of $U_{ad}$. Using the the continuity of $F:L^p(\Omega) \to L^2(\Omega)$ we infer from \eqref{eq:reg_thm_2_1} that $F(\tilde{u}) = F(u_{true})$ . Moreover by sequential lower semi continuity of $v \mapsto \mathcal{R}(v)$ we deduce
\begin{align*}
    \mathcal{R}(\tilde{v}) \leq \liminf_{n \to \infty} \mathcal{R}(v_n) 
    &\leq \limsup_{n \to \infty } \mathcal{R}(v_n) \\
    &\leq\limsup_{n \to \infty}   \left(\frac{\delta_n^2}{2\alpha_n} + 
    \frac{1}{2} \| \nabla u^\dagger \|_{L^2(\Omega)}^2  +  \kappa \beta \| D^{\dagger,\top} P \mathbb{D}^\mathfrak{h}u^\dagger\|_{1,\beta}\right) \\ 
    &= \mathcal{R}(v^\dagger)
\end{align*}
for any solution $v^\dagger$ of  \eqref{eq:minimum_norm_problem}. Here we used \eqref{eq:reg_thm_2_2} in the second step. In conclusion, $\tilde{x}$ must be a solution of \eqref{eq:minimum_norm_problem} as well.
\end{proof}
\noindent
The following theorem provides a stability result with respect to the measured noise data $f^\delta$.

\begin{theorem}\label{thm:stability_thm}
    As before, fix space dimension $d = 2$, and consider Problem~\eqref{eq:p0} with a sequence of noisy data  $(f_n)_n \subset L^2(\Omega, \mathbb{C}^L)$ such that $f_n \to f$ in $L^2(\Omega, \mathbb{C}^L)$ as $n \to \infty$. Consider a sequence of global minimizers of~\eqref{eq:p0}, i.e.
    \begin{equation}
    (u_n,D_n,C_n) \in \argmin_{x=(u, D, C) \in U_{ad} \times \mathcal{D} \times \mathcal{C}} J(x;f_n,\alpha,\lambda,\beta) \quad \forall \, n \in \mathbb{N},
\end{equation}
where the regularization parameters $(\lambda, \beta, \alpha) \in \mathbb{R}_+^3$ are now fixed. Then, up to a subsequence,
\begin{equation}
u_n \rightharpoonup u^* \text{ in } H^1(\Omega), \quad D_n \to D^* \in \mathcal{D}, \quad C_n \to C^* \in \mathcal{C}, \notag
\end{equation}
where $(u^*, D^*, C^*) \in U_{ad} \times \mathcal{D} \times \mathcal{C}$ is a global minimizer for data $f$:
\begin{equation}
    (u^*,D^*,C^*) \in \argmin_{x=(u, D, C) \in U_{ad} \times \mathcal{D} \times \mathcal{C}} J(x;f,\alpha,\lambda,\beta),
\label{eq:stability_thm_1}
\end{equation}
\end{theorem}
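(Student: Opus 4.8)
The plan is to apply the \emph{direct method} of the calculus of variations, closely paralleling the proof of \autoref{thm:regularization_thm}; the essential difference is that here the regularization parameters are fixed while the data vary. First I would fix an arbitrary feasible reference point $\bar{x} = (\bar u, \bar D, \bar C) \in U_{ad} \times \mathcal{D} \times \mathcal{C}$ (for instance $\bar D = I$, $\bar C = 0$) and exploit global optimality of $x_n = (u_n, D_n, C_n)$ to write $J(x_n; f_n,\alpha,\lambda,\beta) \leq J(\bar x; f_n,\alpha,\lambda,\beta)$. Since $f_n \to f$ in $L^2(\Omega,\mathbb{C}^L)$, the right-hand side is bounded uniformly in $n$, and as every summand of $J$ is nonnegative, this yields uniform bounds $\|\nabla u_n\|_{L^2(\Omega)} \leq C$ and $\|C_n\|_1 \leq C$. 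By the Poincaré inequality the first bound gives boundedness of $(u_n)_n$ in $H_0^1(\Omega)$, and the second gives boundedness of $(C_n)_n$ in the finite-dimensional space $\mathcal{C}$ (where $\|\cdot\|_F \leq \|\cdot\|_1$ and all norms are equivalent).

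Next I would extract convergent subsequences. Reflexivity of $H_0^1(\Omega)$ yields $u_n \rightharpoonup u^*$ weakly, and since $d = 2$ the embedding $H_0^1(\Omega) \hookrightarrow L^p(\Omega)$ is compact for the relevant $p > 2$, so $u_n \to u^*$ strongly in $L^p(\Omega)$. The set $\mathcal{D} = O_K$ is compact, giving $D_n \to D^*$, and boundedness in the finite-dimensional space $\mathcal{C}$ gives $C_n \to C^*$, both strongly. Feasibility of the limit is preserved: $U_{ad}$ is convex and closed, hence weakly closed in $H_0^1(\Omega)$, so $u^* \in U_{ad}$, while $D^* \in \mathcal{D}$ and $C^* \in \mathcal{C}$ follow from closedness.

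It then remains to pass to the limit in the functional, treating the terms separately. By continuity of $F: L^p(\Omega) \to L^2(\Omega)$ (cf.\ \autoref{theorem_properties_solution_operator}) together with $u_n \to u^*$ in $L^p(\Omega)$ and $f_n \to f$ in $L^2$, the fidelity term converges, $\|F(u_n) - f_n\|_{L^2}^2 \to \|F(u^*) - f\|_{L^2}^2$. The operator $\mathbb{D}^\mathfrak{h}$ is a bounded linear map into the finite-dimensional space $U^N$, so $u_n \to u^*$ in $L^p(\Omega) \hookrightarrow L^2(\Omega)$ forces $P\mathbb{D}^\mathfrak{h} u_n \to P\mathbb{D}^\mathfrak{h} u^*$ strongly; combined with $D_n C_n \to D^* C^*$ this makes the coupling term $\|P\mathbb{D}^\mathfrak{h} u_n - D_n C_n\|_F^2$ converge, and $\|C_n\|_1 \to \|C^*\|_1$. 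Finally $\|\nabla \cdot\|_{L^2}^2$ is weakly lower semicontinuous. Altogether $J(x^*; f,\alpha,\lambda,\beta) \leq \liminf_n J(x_n; f_n,\alpha,\lambda,\beta)$.

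To conclude minimality, for any competitor $x = (u, D, C) \in U_{ad} \times \mathcal{D} \times \mathcal{C}$ optimality of $x_n$ gives $J(x_n; f_n) \leq J(x; f_n)$, and since only the fidelity term depends on the data and $f_n \to f$, the right-hand side converges to $J(x; f)$. Chaining this with the lower-semicontinuity estimate yields $J(x^*; f) \leq \liminf_n J(x_n; f_n) \leq \lim_n J(x; f_n) = J(x; f)$, so $x^*$ solves \eqref{eq:stability_thm_1}. The one genuinely delicate point is the nonconvex bilinear coupling $DC$: a product of merely weakly convergent factors need not converge to the product of the limits. Here the difficulty dissolves because both $D_n$ and $C_n$ live in finite-dimensional spaces and converge \emph{strongly} (via compactness of $O_K$ and the $\ell^1$-bound on $C_n$), so no weak-times-weak product arises; this is precisely where the finite dimensionality of the dictionary and coefficient variables is essential.
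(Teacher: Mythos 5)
Your proposal is correct and takes essentially the same approach as the paper: the paper's proof is only a brief sketch (coercivity for boundedness, compactness of $\mathcal{D}$ and of the embedding $H_0^1(\Omega) \hookrightarrow L^p(\Omega)$, continuity of $F:L^p(\Omega)\to L^2(\Omega)$, and ``classical lower semi-continuity arguments''), and your argument is precisely this direct-method scheme carried out in full detail. In particular, your competitor comparison $J(x^*;f) \le \liminf_n J(x_n;f_n) \le \lim_n J(x;f_n) = J(x;f)$ and your remark that the bilinear term $D_nC_n$ causes no trouble because both factors converge strongly in finite dimensions are exactly the steps the paper leaves implicit.
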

\begin{proof}
    The proof follows the lines of the corresponding section in  \cite{scherzer2009variational} and uses very similar steps  as the proof of the previous \autoref{thm:regularization_thm}. We therefore only  sketch the argument: We use coercivity to know that $(u_n)_n$ is uniformly bounded in $H_0^1(\Omega)$. $(D_n)_n$ is bounded because it is contained in the compact set $\mathcal{D}$ and $(C_n)_n$ is bounded by the coercivity of $\|\cdot\|_1$. Hence we may assume after passing to a subsequence that ${(u_n, D_n, C_n)_n \subset U_{ad} \times \mathcal{D} \times \mathcal{C}}$ has an accumulation point $({u^*, D^*, C^*) \in U_{ad} \times \mathcal{D} \times \mathcal{C}})$, where the convergence in $u$ is with respect to the strong topology of $L^p(\Omega)$ and the weak topology in $H^1(\Omega)$, due to compact embedding. By  continuity of $F:L^p(\Omega) \to L^2(\Omega)$ and classical lower semi-continuity arguments we eventually conclude \eqref{eq:stability_thm_1}.
\end{proof}
\begin{remark}[Stability and Recovery]
The preceding two theorems offer only preliminary insight and fall short of addressing the full range of practical challenges. Therefore, we briefly highlight additional issues that arise in practical applications.
\begin{enumerate}
    \item Regarding recovery, a key question is whether the minimum-norm solution of \eqref{eq:minimum_norm_problem} can actually yield a meaningful reconstruction of the true solution $u_{\text{true}}$. In the regularization theory of inverse problems, this question is typically addressed using source conditions (cf. \cite{kaltenbacher2008iterative, scherzer2009variational}), albeit in many practical problems, such conditions are hard to verify. More recent works refer to this issue as the identifiability of the underlying model. In the context of dictionary learning in finite dimensions, this question has been studied recently (see, e.g., \cite{gribonval2010dictionary,cohen2019identifiability,hu2023global}). However, in nonlinear settings, especially in infinite-dimensional spaces, much less is known.
    \item  Regarding stability, we focus here solely on stability with respect to noise in the measurement data. However, it is also important to consider stability with respect to other parameters. In practical applications, particular attention should be given to parameters that affect the underlying model itself, such as flip angles and repetition times. Additionally, factors such as the sampling pattern and the choice of the initial guess can significantly impact the reconstruction quality. Moreover, in \autoref{thm:regularization_thm} and \autoref{thm:stability_thm}, we assume access to global solutions of the underlying nonconvex optimization problem. In practice, however, one typically obtains only stationary points. Relaxing these assumptions and analyzing the model in its full generality including convergence rates is an important topic for future research.
\end{enumerate}

\end{remark}
\section{Numerical experiments}
We present numerical results for the nested alternating algorithm applied to the qMRI problem \eqref{eq:p0}. Spatial discretization follows the finite difference scheme from \cite{dong2019quantitative}. The update steps for $z=(D,C)$ are based on blind dictionary learning methods from \cite{ravishankar2012learning,ravishankar2015efficient}, originally developed for linear MRI. For reference, the algorithm and key properties are summarized in \cref{alg:algorithm_dictionary} (appendix, \cref{alg:algorithm_dictionary}). Ground truth parameters used in our experiments are shown in \autoref{fig:ground_truth}.

\begin{figure}[h!]
    \centering
    \begin{minipage}[b]{0.34\textwidth}
    \centering
    \includegraphics[width=1\linewidth]{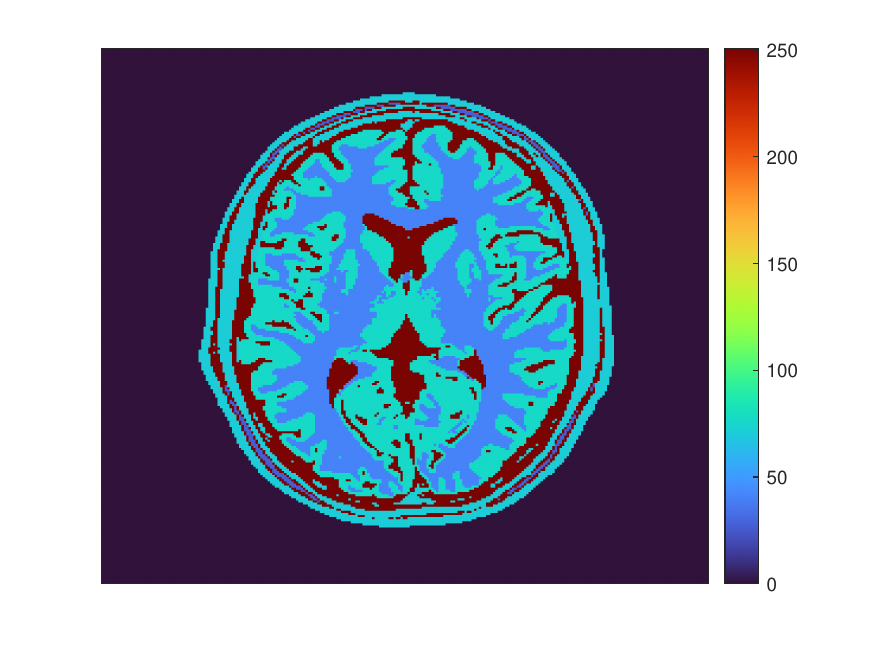} \\[-0.5cm]
    $T_1$-map.
    \end{minipage} 
    \hspace{-0.5cm}
    \begin{minipage}[b]{0.34\textwidth}
    \centering
    \includegraphics[width=1\linewidth]{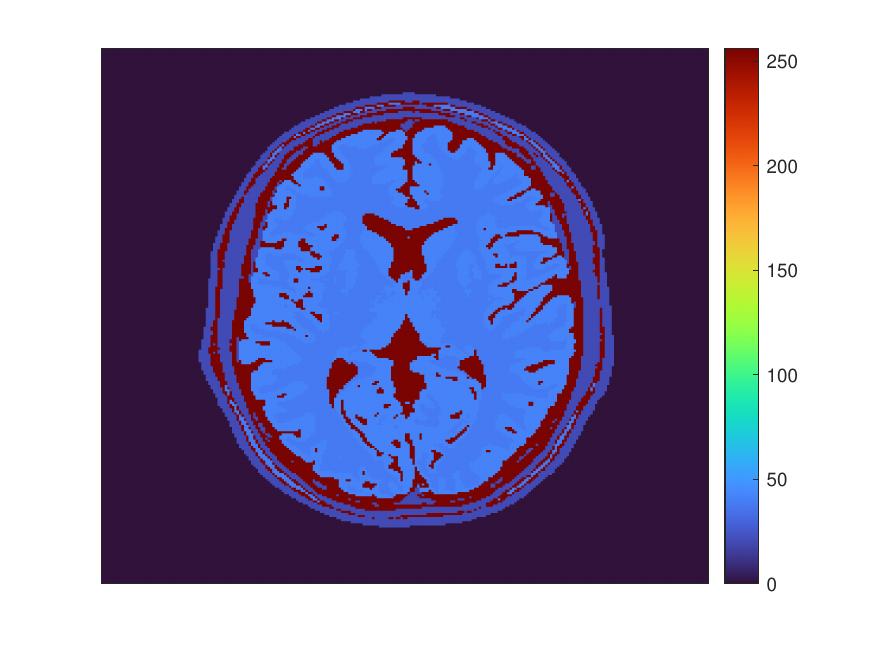}
     \\[-0.5cm]
    $T_2$-map
    \end{minipage}
    \hspace{-0.5cm}
    \begin{minipage}[b]{0.34\textwidth}
    \centering
    \includegraphics[width=1\linewidth]{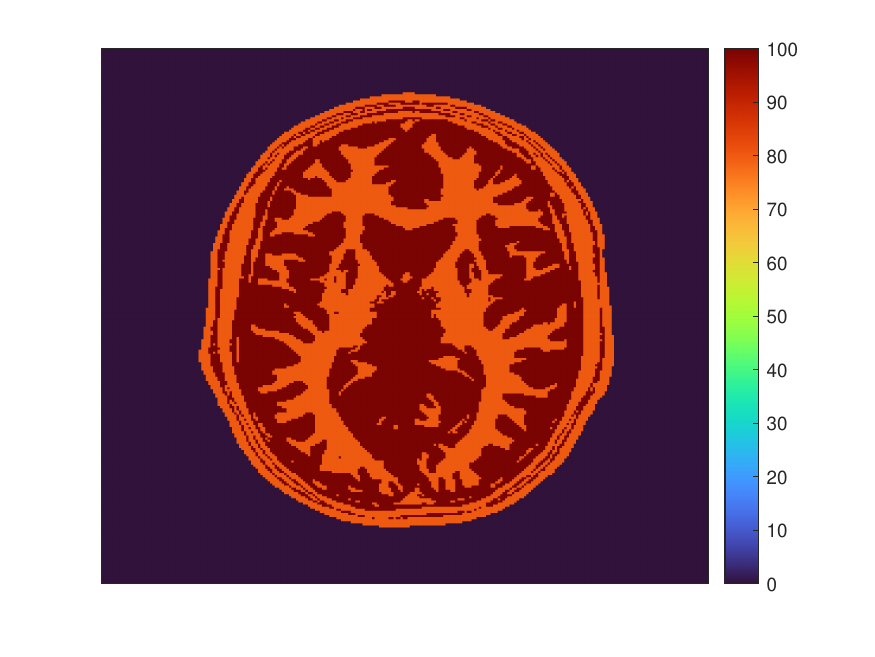}
     \\[-0.5cm]
    $\rho$-map
    \end{minipage}
    \caption{The set of ground truth parameters $T_1,T_2$ (milliseconds) and proton density $\rho$ (dimensionless).}
\end{figure}\label{fig:ground_truth}
\noindent
The ground truth images (physical parameters) follow a standard configuration commonly used for testing qMRI methods, and has been employed in previous studies, such as \cite{davies2014compressed, dong2019quantitative}. To streamline the parameter search and improve the conditioning of the subproblems, we scaled both the \(T_1\) and \(T_2\) variables to the range \([0,250]\) ms. Although these ranges differ from typical relaxation times, which lie in the ranges \(T_1 \in [0,6000]\) ms and \(T_2 \in [0,600]\) ms, this adjustment provides a simplified setting that effectively demonstrates the performance of our algorithm while still being relevant for practical applications.
\begin{figure}[h!]
    \centering
    \begin{minipage}[b]{0.22\textwidth}
    \centering
    \includegraphics[width=1\linewidth]{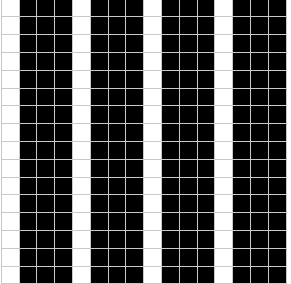} \\
    $\mod(t,4) = 1$
    \end{minipage} 
    \hspace{0.2cm}
    \begin{minipage}[b]{0.22\textwidth}
    \centering
    \includegraphics[width=1\linewidth]{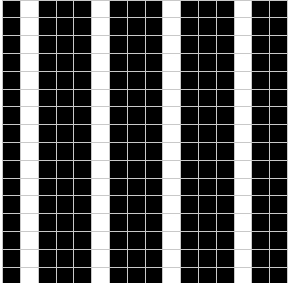}
     \\
    $\mod(t,4) = 2$
    \end{minipage}
    \hspace{0.2cm}
    \begin{minipage}[b]{0.22\textwidth}
    \centering
    \includegraphics[width=1\linewidth]{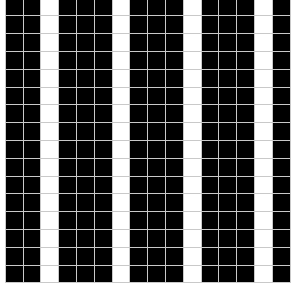}
     \\
    $\mod(t,4) = 3$
    \end{minipage}
    \hspace{0.2cm}
    \begin{minipage}[b]{0.22\textwidth}
    \centering
    \includegraphics[width=1\linewidth]{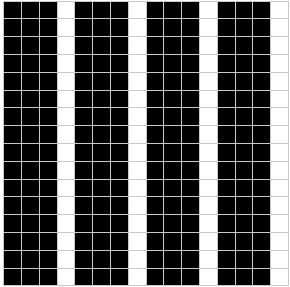}
     \\
    $\mod(t,4) = 0$
    \end{minipage}
    \caption{Schematic illustration of the equidistant Cartesian sampling mask used in the experiment across time steps $t = 1, \ldots, L$, shown on an example $16 \times 16$ image. White pixels indicate sampled frequency components. At each subsequent time step, the sampling pattern shifts one line to the right. A $4\times$ undersampling factor is shown for illustration; in the actual experiments, $16\times$ and $32\times$ undersampling are used. 
}
\end{figure}\label{fig:sampling_masks}
\paragraph{The discrete optimization problem}  For our numerical tests, we employ a uniform grid and a finite differences discretization for the variable \( u= (\rho,T_1,T_2) \). This approach leads us to consider the space \( U := \mathbb{R}^{n_1 \times n_2 \times 3} \), where \( n_1, n_2 \in \mathbb{N} \) denote the number of pixels in each direction. In our case, we set \( n_1 = n_2 = 256 \). We also make use of the classical finite difference approximations for the image gradient, denoted by \( \nabla^\mathfrak{h} : \mathbb{R}^{n_1 \times n_2} \to \mathbb{R}^{n_1 \times n_2 \times 2} \), and the Laplace operator with zero boundary conditions, denoted by \( \Delta^\mathfrak{h} :  \mathbb{R}^{n_1 \times n_2} \to \mathbb{R}^{n_1 \times n_2} \), with a mesh-size $\mathfrak{h}>0$. If $\nabla^\mathfrak{h}$ and $\Delta^\mathfrak{h}$ are applied on elements of $U$, we will use the same notation and apply the operators component wise, i.e.
\begin{align*}
    &\tilde{\nabla}^\mathfrak{h} : U \to U_1 := \mathbb{R}^{3 \times n_1 \times n_2 \times 2 }, 
    &\tilde{\nabla}^\mathfrak{h} u = (\nabla^\mathfrak{h} \rho, \nabla^\mathfrak{h} T_1, \nabla^\mathfrak{h} T_2 ),   \\
    &\tilde{\Delta}^\mathfrak{h} : U \to U,  
     &\tilde{\Delta}^\mathfrak{h} u = (\Delta^\mathfrak{h} \rho, \Delta^\mathfrak{h} T_1, \Delta^\mathfrak{h} T_2 ),
\end{align*}
for $ u = (\rho,T_1,T_2) \in U $. 
We equip both spaces, $U,U_1$, with the following scaled norms, defined by:
\begin{align}
     \|u\|_U^2 &:= \frac{h^2}{M_1^2}\| \rho \|^2_{2} + \frac{h^2}{M_2^2} \|T_1\|^2_2 + \frac{h^2}{M_3^2} \|  T_2\|^2_2, \\
     \|v\|_{U_1}^2 &:= \frac{h^2}{M_1^2}\| v_1 \|^2_{2} + \frac{h^2}{M_2^2} \| v_2 \|^2_2 + \frac{h^2}{M_3^2} \|  v_3 \|^2_2, \label{eq:discrete_norms}
\end{align}
where we used $M_1 = 100$ and $M_2 = M_3 = 250$ as scaling parameters. This results in the following discrete minimization problem:
\begin{align}
    \min_{(u,D,C) \in X}  J_d(u,D,C) :=  \frac{h^2}{2}\|F_d (u) - f^\delta\|_{2}^2 
    & + \frac{\alpha}{2} \| \nabla u \|_{U_1}^2 + \mathcal{I}_{U_{ad}}(u)    \notag \\ 
    & + 
    \sum_{i=1}^3 \lambda^j \left(\frac{1}{2}\|  P [\frac{1}{M_j} u_j ]  - D_jC_j \|_F^2  + \beta_j \|C_i \|_1\right), \tag{$P_2$}\label{eq:P_discrete}
\end{align}
with $X := U_{ad} \times O_K^3 \times \mathbb{R}^{M\times K \times 3}$ and
 \begin{equation}
     U_{ad} = \left\lbrace u= (\rho,T_1,T_2) \in U \middle\vert 
     \begin{array}{l}
     \rho_{ij} \in [0,110] , (T_1)_{ij} \in [0,300], (T_2)_{ij} \in [0,300]  \\
     \text{for all $1\leq i \leq n_1$ and $1\leq j \leq n_1$}
     \end{array}
     \right \rbrace .
 \end{equation}
As for the continuous forward operator, the discrete counterpart  $F_d:U \to \mathbb{C}^{n_1 \times n_2 \times L}$ in \eqref{eq:P_discrete} is defined as the composition $F_d = A\circ \Pi_d$ with the discrete Bloch solution operator given by 
\begin{equation}
    \Pi_d: U \to \mathbb{C}^{n_1 \times n_2 \times L} \quad [\Pi_d(u)]_{ijl} = \pi(u_{ij})_l, \quad \text{$1 \leq l \leq L$}, \notag
\end{equation}
where $\pi: \mathbb{R}^3 \to \mathbb{C}^L$ is the function defined in \eqref{eq:def_pi}. Moreover, the linear operator $A$, modelling the observation process, is defined by 
\begin{equation}
    A: \mathbb{C}^{n_1 \times n_2 \times L} \to \mathbb{C}^{n_1 \times n_2 \times L}, \quad [A y]_{ijl} = S_l \mathcal{F}[y_l], \quad \text{$1 \leq l \leq L$}.\label{eq:def_A_discrete}
\end{equation}
In the definition above, $\mathcal{F}: \mathbb{C}^{n_1 \times n_2}  \to \mathbb{C}^{n_1 \times n_2} $ denotes the normalized, discrete 2D-Fourier transform and $S_l: \mathbb{C}^{n_1 \times n_2} \to \mathbb{C}^{n_1 \times n_2} $ denotes a predefined sampling pattern which acts on the $l$-th magnetization slice as 
\begin{equation}
 S_l(y)_{i,j} = 
 \begin{cases}
     y_{ij} &\text{if frequency $y_{ij} \in \mathbb{C}$ is sampled,} \\ 
     0 &\text{if $y_{ij}$ is not sampled.}
 \end{cases} \notag
\end{equation}
 Regarding the sampling-pattern, we follow exactly the setting in \cite{dong2019quantitative}, it is also depicted in \autoref{fig:sampling_masks}. Moreover, we make use of the linear patch extraction operator $P$, which cuts out small image patches and puts them into a large matrix. More precisely, we define:
\begin{equation}
    P : \mathbb{R}^{n_1 \times n_2} \to \mathbb{R}^{M \times K} \quad P u =  
    \begin{bmatrix}
        P_{11}u , P_{21}u , \ldots P_{n_1 1}u,
        P_{1 2}u , P_{2 2}u \ldots P_{n_1 2}u , \ldots, 
        P_{1 n_2}u , \ldots P_{n_1 n_2}u  
    \end{bmatrix}. \notag
\end{equation}
for $j = 1,2,3$. Here \( P_{kl}: \mathbb{R}^{n_1 \times n_2} \to \mathbb{R}^K \) is an operator that extracts a patch of size \( p \times p \) from an image \( u \in \mathbb{R}^{n_1 \times n_2} \), where the top-left corner of the patch is located at pixel \((k, l)\). The extracted patch is then vectorized into a column vector of size \( K = p^2 \). We use overlapping patches, as described in \cite{ravishankar2010mr,ravishankar2015efficient} such that exactly $M = n_1 \cdot n_2$ can be extracted.  The normalization factor \( {1}/{M_j} \) is introduced because, empirically, we observed improved reconstruction quality for the dictionary learning problem when the data is normalized. Additionally, this normalization significantly simplifies the process of hyperparameter tuning.
The overall discrete version of \cref{alg:algorithm_general} is given by \cref{alg:algorithm_overall_discrete}.
\begin{algorithm}[htbp]
\caption{Computation of a stationary point of Problem \eqref{eq:P_discrete}.}
\begin{algorithmic}[1]
\State Choose initial values $(u_0, D_0,C_0) \in U_{ad} \times O_K^3 \times \mathbb{R}^{K \times M \times 3}$, the parameter $\gamma>0$ to define the stopping tolerance for the nested subroutine, the mesh size $\mathfrak{h}>0$, the step-size parameters for the $u$-step $\lambda_0 >0, \tau >1$ and $\sigma_{BT} \in (0,1)$, $\varepsilon_1, \varepsilon_2 > 0$. 
\State Set $k$ = 0.
\While{{$\| u_{k} - u_{k-1} \|^2_{U} + \| \nabla^\mathfrak{h} u_{k} - \nabla^\mathfrak{h} u_{k-1} \|_{U_1}^2 \geq \varepsilon_1^2 $ or $\| D_{k} - D_{k-1} \|^2_{F} + \| C_{k} - C_{k-1} \|^2_{F} \geq \varepsilon_2^2$}} 
	\State \textbf{Dictionary-learning-step}: Given $(u_k,D_k,C_k) \in U \times O_K^3 \times \mathbb{R}^{M \times K \times 3}$ 
    \State \textbf{for} $j=1,2,3$ \textbf{do}: 
    \State \hspace{1cm} Use \cref{alg:algorithm_dictionary} with initialization $D_k^0 := (D_k)_j$, $C_k^0 := (C_k)_j$ and 
    \State \hspace{1cm} stopping tolerance $\eta_k := k^\gamma \sqrt{\|C_0\|_F^2 + \|D_0\|_F^2}$ for the problem :
    \begin{equation}
    \min_{D \in O_K,C \in \mathbb{R}^{M \times K}} \frac{1}{2} \| DC - P[ \frac{1}{M_j} u_j] \|_{F}^2 + \frac{\beta_j}{\lambda^j} \|C\|_1, \notag 
\end{equation}
\State \hspace{1cm} to obtain $(D_{k+1})_j \in O_K$ and $(C_{k+1})_j \in \mathbb{R}^{K \times M}$. 
\State \textbf{end for}
    \State \textbf{$\mathbf{u}$-step}: Given $(u_k, D_{k+1},C_{k+1}) \in U \times O_K^3 \times \mathbb{R}^{M \times K \times 3}$.
    \State \textbf{for} $j= 1,\ldots$ \textbf{do}: 
    \State \hspace{1cm} Take $\lambda_0>0$, set $\lambda_k := \lambda_0 \tau^j$ 
     and compute a global solution $\widehat{u}(\lambda_k) \in U_{ad}$ of 
    \begin{equation}
        \min_{u \in U_{ad}} g^d_{\lambda_k}(u,u_k) + \frac{\alpha}{2} \| \nabla^\mathfrak{h} u \|_{U_1}^2 + \sum_{j=1}^3 \frac{\lambda^j}{2}\|P[\frac{1}{M_j} u ]- (D_{k+1})_j (C_{k+1})_j \|_F^2  , \label{eq:discrete_subpropblem_u}
    \end{equation} 
    \State \hspace{1cm}  where $g^d_{\lambda_k}(\cdot,u_k):U \to \mathbb{R}$ is defined as the discrete analogue of the model in \eqref{eq:def_model_function_u} as
    \begin{align*}
        g^d_{\lambda_k}(u,u_k) := \frac{\mathfrak{h}^2}{2}\|F_d'(u_k)[u-u_k] 
        &+ F(u_k) - f^\delta \|_{2}^2  \\
        &+ \frac{\lambda_k}{2}\left( \| u - u_k \|^2_{U} + \|\nabla^\mathfrak{h}( u - u_k) \|^2_{U_1} \right),
    \end{align*}
    \State \hspace{1cm} until the descent condition
    \begin{align}
        J_d(\widehat{u}(\lambda_k),D_{k+1},C_{k+1}) & \leq J_d(u_k,D_{k+1},C_{k+1}) \notag \\
        & - \frac{\sigma_3 \lambda_k}{2} \left( \|\widehat{u}(\lambda_k) - u_k\|_{U}^2 + 
        \| \nabla^\mathfrak{h} \widehat{u}(\lambda_k) -  \nabla^\mathfrak{h} u_k\|_{U_1}^2 \right),
    \end{align}
    \State \hspace{1cm} is satisfied.
    \State \textbf{end for}
    \State Set $u_{k+1} = \widehat{u}(\lambda_k)$. 
    \State Set $k = k+1$.
\EndWhile 
\State Return $u_{k} \in U$ as the desired physical parameter.
\end{algorithmic}
\end{algorithm}\label{alg:algorithm_overall_discrete}
\paragraph{Details on the implementation.} The main difficulty in the implementation of \cref{alg:algorithm_overall_discrete} is the solution of the subproblem \eqref{eq:discrete_subpropblem_u}. We use the built-in quadratic programming solver in MATLAB, which relies on the trust-region subspace method,  combining techniques from \cite{coleman1996reflective} and \cite{branch1999subspace}. However, the subproblem is still very delicate to solve, as the Hessian involves the term:
\begin{equation}
     (\Pi_d)'(u_k)^* A^* A (\Pi_d)'(u_k),
    \label{eq:remark_computability_discrete}
\end{equation}
with $A$ as defined in \eqref{eq:def_A_discrete}. Note that if $S_l = Id$ for every $1 \leq l \leq L$, i.e. if no subsampling is applied, then $A^* A = I$. If instead subsampling is used, i.e. if $S_l \neq Id$, $l=1,\ldots,L$, the matrix in \eqref{eq:remark_computability_discrete} is generally dense and of large scale, which complicates the solution procedure of the subproblem significantly. To mitigate this issue, we follow the approximation approach, proposed in \cite{wuebbeler2017large} and replace the term $F'(u_k)^* F'(u_k)$ by the approximation
\begin{equation}
    (\Pi_d)'(u_k)^* A^* A (\Pi_d)'(u_k) \approx \frac{1}{r} (\Pi_d)'(u_k)^* (\Pi_d)'(u_k).\label{eq:hessian_approximation_LM}
\end{equation}
where $r \in \mathbb{N}$ is the undersampling rate. In our experiments in the next paragraph we use $r \in \lbrace 16 , 32\rbrace$. 
Let us comment that, while the quality of this approximation seems well documented in practical applications, a rigorous proof that quantifies potential deviations is still missing. 
\paragraph{Two experiments with different noise levels and undersampling rates.} We test our algorithm on two synthetic test cases. For both we take the ground truth image
$u_{true} \in U$ depicted in \autoref{fig:ground_truth} and simulate noisy data by using the forward operator according to
\begin{equation}
    f^\delta =F_d(u_{true}) + \sigma^2 \mathcal{N}(0,I).
\end{equation}
We utilize complex noise with a standard deviation of $\sigma^2 > 0$ and apply Cartesian subsampling patterns, following the approach in \cite{dong2019quantitative}, cf. also \autoref{fig:sampling_masks}. In the first experiment, we use a $16\times$ undersampling rate and set the noise standard deviation to $\sigma = 2$. In the second experiment, the undersampling rate is increased to $32\times$ and the standard deviation to $\sigma = 5$. For both experiments, we fix $L = 100$, a small yet almost realistic value for practical applications, which ranges typically in $L \in [200,1000]$. Our method is compared against the BLIP reconstruction technique proposed in \cite{davies2014compressed} and the Levenberg-Marquardt method from \cite{dong2019quantitative}. It is important to note that the vanilla Levenberg-Marquardt method guarantees convergence only for zero-residual problems. For other configurations, the outcome is strongly influenced by the number of algorithm steps executed, and convergence to stationary points is typically not expected.. Although numerous well-established stopping criteria exist in the inverse problems literature, we do not apply them in this study. Instead, we manually adjust the number of Levenberg-Marquardt steps for our experiments, to ensure convergence of the method. For the linear system that must be solved at each update step of the Levenberg-Marquardt method, we employ the approximation \eqref{eq:hessian_approximation_LM}. We also compare our algorithm with the dictionary learning algorithm, where, instead of using the nested update procedure, only a single update step for both $D$ and $C$ is performed. We refer to this algorithm as \emph{one-step-dictionary-learning} or \emph{dictionary learning (one step)}. The parameter settings, along with a description of the parameters for both configurations, are presented in \autoref{tab:parameter_choice}.
\paragraph{Results and observations} 
The results of all algorithms are visually presented in \autoref{fig:qMRI_comparisons_dictionary_small_noise} for the small noise case with $16\times$ undersampling and in \autoref{fig:qMRI_comparisons_dictionary_large_noise} for the $32\times$ undersampling with a higher noise level. The relative errors, defined as
\begin{equation}
    \frac{\|X_{\mathrm{reconstruction}} - X_{\mathrm{groundtruth}} \|_2 }{\|X_{\mathrm{groundtruth}}\|_2} \notag
\end{equation}
for $X \in \lbrace \rho, T_1, T_2 \rbrace$ are shown in \autoref{tab:quantitative_results_low_noise} for the low noise regime and in \autoref{tab:quantitative_results_high_noise} for the high noise regime. We observe that, in all experiments, the nested algorithm consistently produces the smallest relative error. While the difference compared to the one-step approach is visually almost unnoticeable, the quantitative values show improvements of up to 10 percent. Additionally, the function values are lower in comparison, as seen in \autoref{fig:function_value_comparison}. 
The results for the $32 \times$ undersampling are particularly promising, as the Levenberg-Marquardt method was unable to produce meaningful results in this scenario. However, one downside of the algorithm is, that the subproblems in \eqref{eq:discrete_subpropblem_u} become very ill-conditioned for small mesh sizes $h > 0$ and step-size parameters $\lambda_k$. This leads to convergence issues, forcing us to set $h = 1$ and to stop the algorithm after 100 iterations in most cases for time reasons, rather than waiting for the stopping criterion in line 19 of \cref{alg:algorithm_overall_discrete} to be met. More tailored algorithms could improve performance in this regard.
\begin{center}
\begin{figure}[htbp]
\centering
\begin{tabular}{|c |p{8.5cm} |c |  }
 \hline
 \multicolumn{3}{|c|}{List of parameters for \cref{alg:algorithm_overall_discrete}} \\
 \hline
 Parameter & \centering Description & Value  \\
 \hline
 $\alpha = (\alpha_1,\alpha_2,\alpha_3) \in \mathbb{R}_{>0}$   &  Regularization parameters for the sparsity of $C$.   & $0.0045*(1,1,1)$ \\
  {}  & {}   & $0.0095*(1,1,1)$ \\
 $\lambda = (\lambda^1,\lambda^2,\lambda^3) \in \mathbb{R}_{>0}$   &  Step-size parameters in \cref{eq:discrete_subpropblem_u}   & $45*(1,1,1)$ \\
 {}   & {}   & $50*(1,1,1)$ \\
 $ (\lambda_D^k,\lambda_C^k) \in \mathbb{R}_{>0} $   &  Step size parameter for the dictionary learning subproblem in \cref{alg:algorithm_dictionary}.    & $(1,1)$ \\ 
  $ \gamma>0$   &  Accuracy parameter for the nested  \cref{alg:algorithm_dictionary}. See also the input of \cref{alg:algorithm_overall_discrete}.    & $0.75$ \\ 
   $ (\lambda_0,\tau,\sigma_3) \in \mathbb{R}_{>0}$   &  Parameters for the backtracking search in \cref{alg:algorithm_overall_discrete}.    & $(1,8,0.5)$ \\ 
   $ (M_1,M_2,M_3) \in \mathbb{R}_{>0}$   & Scaling parameters for the norm in \eqref{eq:discrete_norms}.    & $(100,260,260)$ \\ 
   $ p \in \mathbb{N} $   & Patch size of the squared $p \times p$ patches    & $8$ \\
    $ K \in \mathbb{N} $   & Size of the orthogonal  dictionary $K = p^2$    & $64$ \\
    $ \mathfrak{h} \in \mathbb{R}_{>0} $   & Mesh-size for the differential operators    & $1$ \\
 \hline
\end{tabular} 
\caption{The list and the description of the different parameters in \cref{alg:algorithm_overall_discrete}. The upper value corresponds to the experiment with $16 \times$ undersampling factor and the lower value to the experiment with $32\times$ undersampling. If only one value is provided, we used this value for both experiments.}
\end{figure}\label{tab:parameter_choice}
\end{center}

\begin{center}
\begin{figure}[b!]
\begin{tabular}{|c |c |c |c| c |  }
 \hline
 \multicolumn{5}{|c|}{Numerical results of the algorithm.} \\
 \hline
 {} & BLIP & Levenberg Marquardt &  Dictionary learning (one step) & Dictionary learning (nested) \\
 \hline
 $T_1$ & $0.231$   & $0.155$  & $0.091$ & $\mathbf{0.086}$  \\
 $T_2$  & $0.26$   & $0.177$  & $0.09$ & $\mathbf{0.077}$  \\ 
 $ \rho $  & $0.25$   & $0.222$  & $\mathbf{0.12}$ & $\mathbf{0.12}$  \\
 \hline
\end{tabular}
\caption{Results of the reconstruction algorithm for moderate $16 \times$ undersampling and low noise.}
\end{figure}\label{tab:quantitative_results_low_noise}
\end{center}

\begin{center}
\begin{figure}[b!]
\begin{tabular}{|c |c |c |c| c |  }
 \hline
 \multicolumn{5}{|c|}{Numerical results of the algorithm.} \\
 \hline
 {} & BLIP & Levenberg Marquardt &  Dictionary learning (one step) & Dictionary learning (nested) \\
 \hline
 $T_1$ & $1.195$   & $0.838$  & $0.192$ & $\mathbf{0.184}$  \\
 $T_2$  & $0.733$   & $0.4$  & $0.204$ & $\mathbf{0.185}$  \\ 
 $ \rho $  & $0.236$   & $0.305$  & $0.136$ & $\mathbf{0.134}$  \\
 \hline
\end{tabular}
\caption{Results of the reconstruction algorithm for moderate $32 \times$ undersampling and higher noise.}
\end{figure}\label{tab:quantitative_results_high_noise}
\end{center}

\begin{figure}[h!]
    \centering
    \begin{minipage}[b]{0.45\textwidth}
    \centering
    \includegraphics[width=1\linewidth]{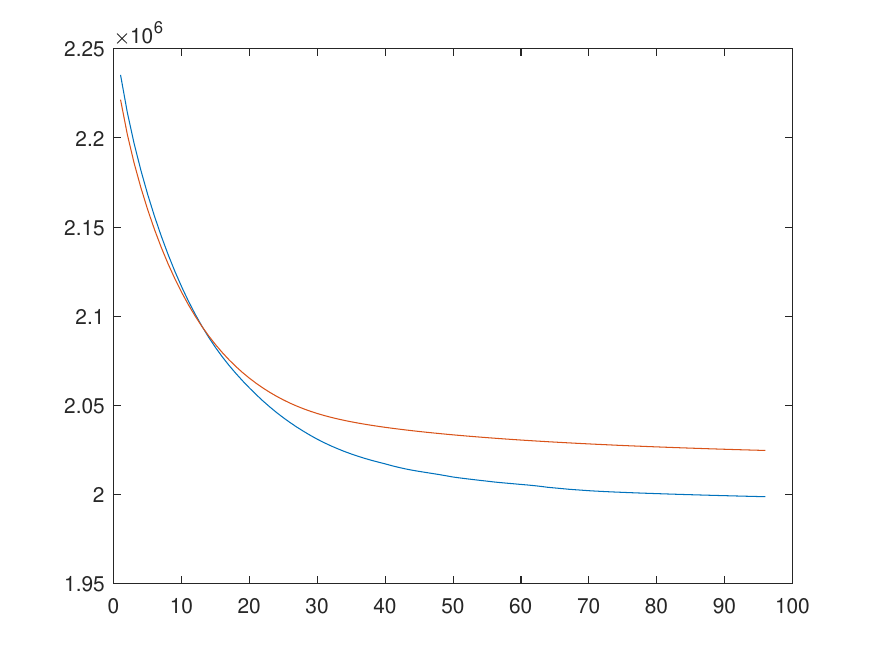}\\[-0.2cm]
    (a)
    \end{minipage}
    \begin{minipage}[b]{0.45\textwidth}
    \centering
    \includegraphics[width=1\linewidth]{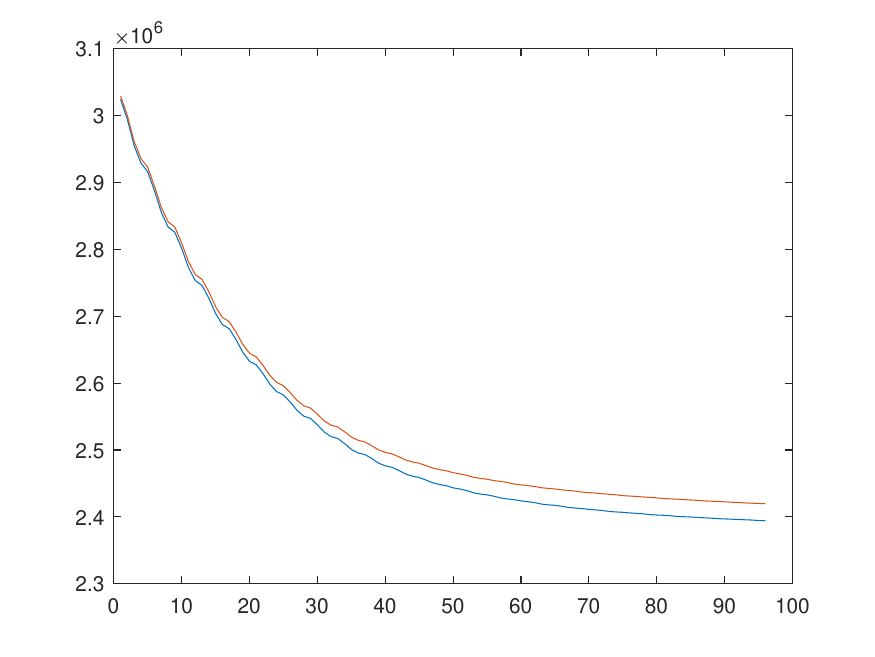}\\[-0.2cm]
    (b)
    \end{minipage}    
    \caption{Comparison of function values over iteration number for the one-step approach (orange) and the nested optimization algorithm (blue). In (a), the graph corresponds to the $16 \times$ undersampling regime, and in (b), to the $32 \times$ undersampling. The nested method demonstrates a faster reduction in function values in both cases.}
\end{figure}\label{fig:function_value_comparison}
\begin{figure}[p!]
 \hspace{0.8cm}
\begin{minipage}[b]{0.15\textwidth}
\centering
Ground truth 
\end{minipage}
\hspace{0.1cm}
\begin{minipage}[b]{0.15\textwidth}
\centering
 BLIP
 \end{minipage}
 \hspace{0.1cm}
 \begin{minipage}[b]{0.15\textwidth}
\centering 
LM
\end{minipage}
\hspace{0.1cm}
\begin{minipage}[b]{0.15\textwidth}
\centering
DL (one step)
\end{minipage}
\hspace{0.1cm}
\begin{minipage}[b]{0.15\textwidth}
\centering
DL (nested) 
\end{minipage}
\vspace{0.2cm}

\begin{minipage}[b]{0.03\textwidth}
\rotatebox{90}{{\hspace{7mm} $T_1$ maps }}
\end{minipage}
\begin{minipage}[b]{0.9\textwidth}
\centering
\includegraphics[width=1\textwidth, trim={1.4cm 0 0 0},clip]{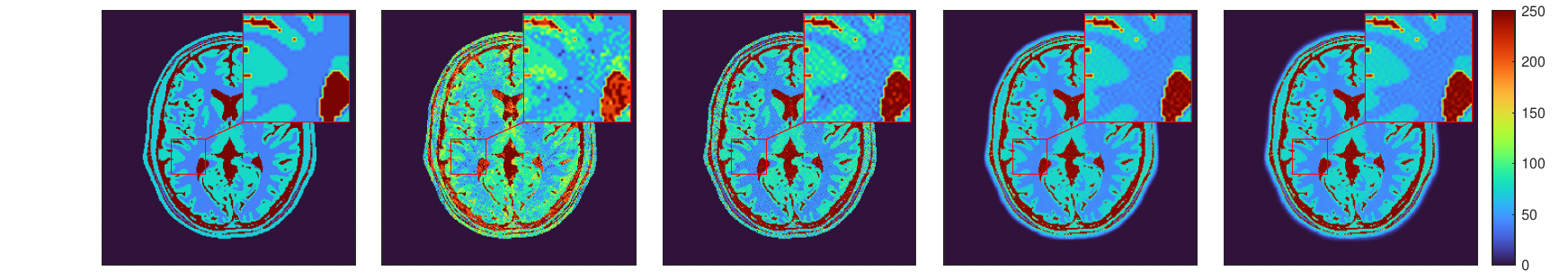} \end{minipage}

\begin{minipage}[b]{0.03\textwidth}
\rotatebox{90}{{\hspace{5mm} $T_1$ Error maps}}
\end{minipage}
\begin{minipage}[b]{0.90\textwidth}
\centering
\includegraphics[width=1\textwidth,  trim={1.4cm 0 0 0},clip]{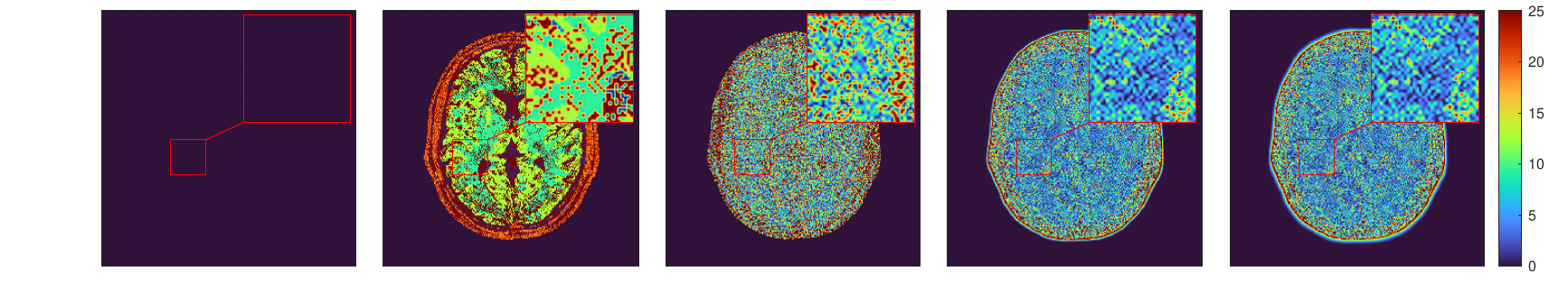}\vspace{0.5em}
\end{minipage}

\begin{minipage}[b]{0.03\textwidth}
\rotatebox{90}{{\hspace{5mm} $T_2$ maps}}
\end{minipage}
\begin{minipage}[b]{0.90\textwidth}
\centering
\includegraphics[width=1\textwidth,  trim={1.4cm 0 0 0},clip]{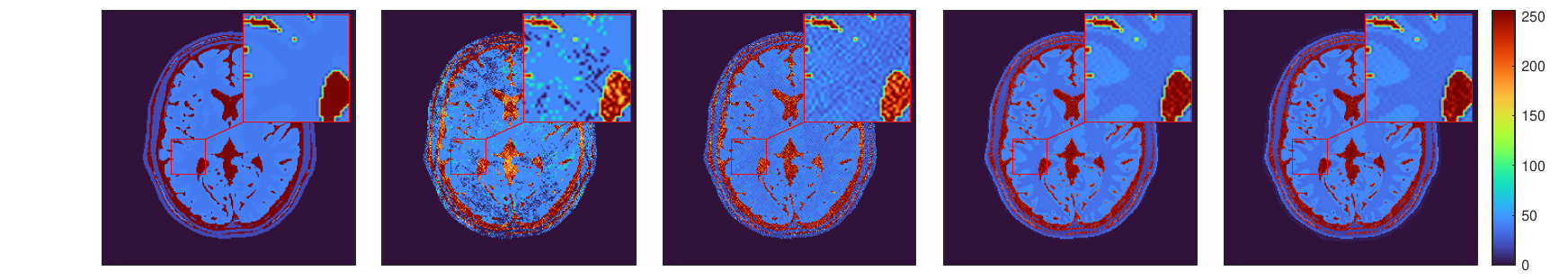} 
\end{minipage}

\begin{minipage}[b]{0.03\textwidth}
\rotatebox{90}{{\hspace{2mm} $T_2$ Error maps}}
\end{minipage}
\begin{minipage}[b]{0.90\textwidth}
\centering
\includegraphics[width=1\textwidth,  trim={1.4cm 0 0 0},clip]{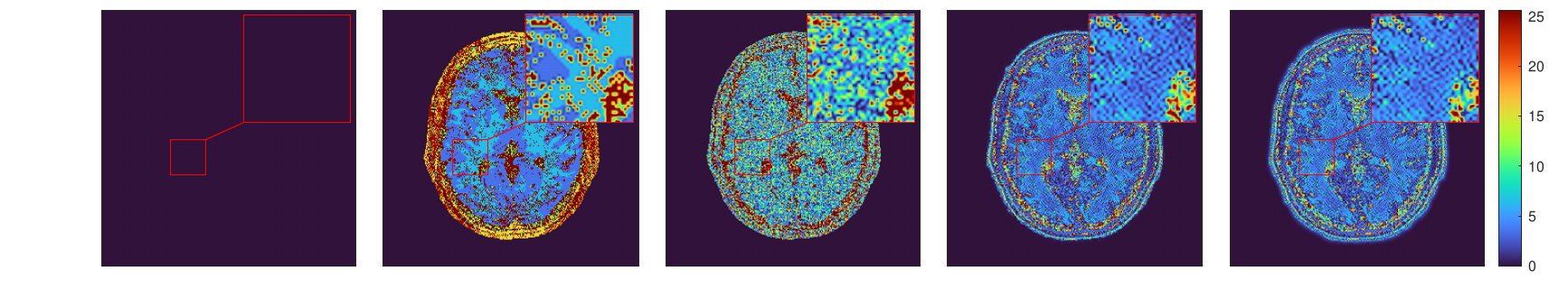} 
\end{minipage}\vspace{0.5em}

\begin{minipage}[b]{0.03\textwidth}
\rotatebox{90}{{\hspace{7mm} $\rho$ maps}}
\end{minipage}
\begin{minipage}[b]{0.90\textwidth}
\centering
\includegraphics[width=1\textwidth,  trim={1.4cm 0 0 0},clip]{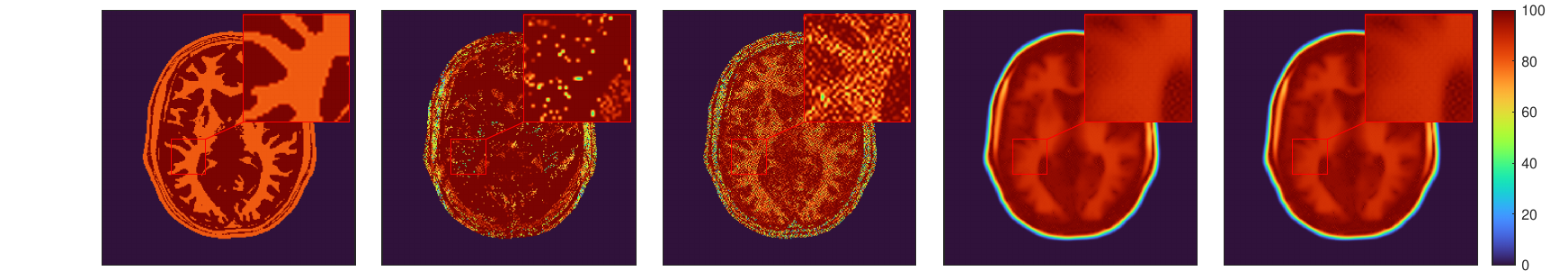} 
\end{minipage}

\begin{minipage}[b]{0.03\textwidth}
\rotatebox{90}{{\hspace{1mm} $\rho$ Error maps}}
\end{minipage}
\begin{minipage}[b]{0.90\textwidth}
\centering
\includegraphics[width=1\textwidth,  trim={1.4cm 0 0 0},clip]{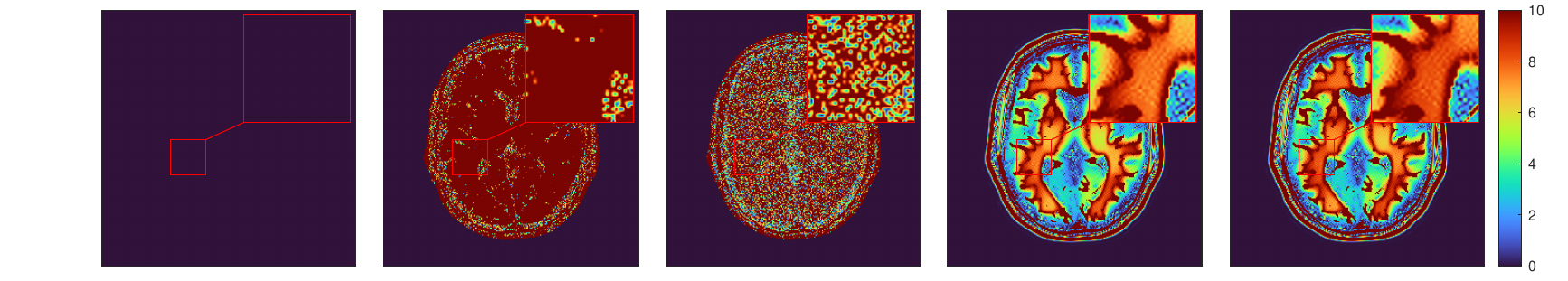} 
\end{minipage}

\caption{Comparison of the estimated parameters $T_1$ and $T_2$ (milliseconds) and proton density $\rho$ (dimensionless; relative ratio). The figure compares the reconstruction quality of the BLIP method, the Levenberg-Marquardt (LM) approach from \cite{dong2019quantitative}, and the dictionary learning (DL) approach proposed in this work for the $16 \times$ undersampling case.
} 
\end{figure}\label{fig:qMRI_comparisons_dictionary_small_noise}

\begin{figure}[p!]
 \hspace{0.8cm}
\begin{minipage}[b]{0.15\textwidth}
\centering
Ground truth 
\end{minipage}
\hspace{0.1cm}
\begin{minipage}[b]{0.15\textwidth}
\centering
 BLIP
 \end{minipage}
 \hspace{0.1cm}
 \begin{minipage}[b]{0.15\textwidth}
\centering 
LM
\end{minipage}
\hspace{0.1cm}
\begin{minipage}[b]{0.15\textwidth}
\centering
DL (one step)
\end{minipage}
\hspace{0.1cm}
\begin{minipage}[b]{0.15\textwidth}
\centering
DL (nested) 
\end{minipage}
\vspace{0.2cm}

\begin{minipage}[b]{0.03\textwidth}
\rotatebox{90}{{\hspace{7mm} $T_1$ maps }}
\end{minipage}
\begin{minipage}[b]{0.9\textwidth}
\centering
\includegraphics[width=1\textwidth, trim={1.4cm 0 0 0},clip]{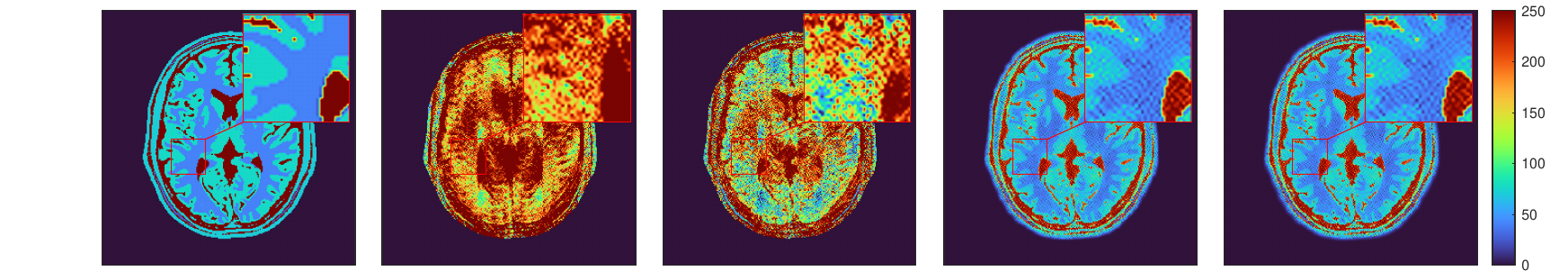} \end{minipage}

\begin{minipage}[b]{0.03\textwidth}
\rotatebox{90}{{\hspace{5mm} $T_1$ Error maps}}
\end{minipage}
\begin{minipage}[b]{0.90\textwidth}
\centering
\includegraphics[width=1\textwidth,  trim={1.4cm 0 0 0},clip]{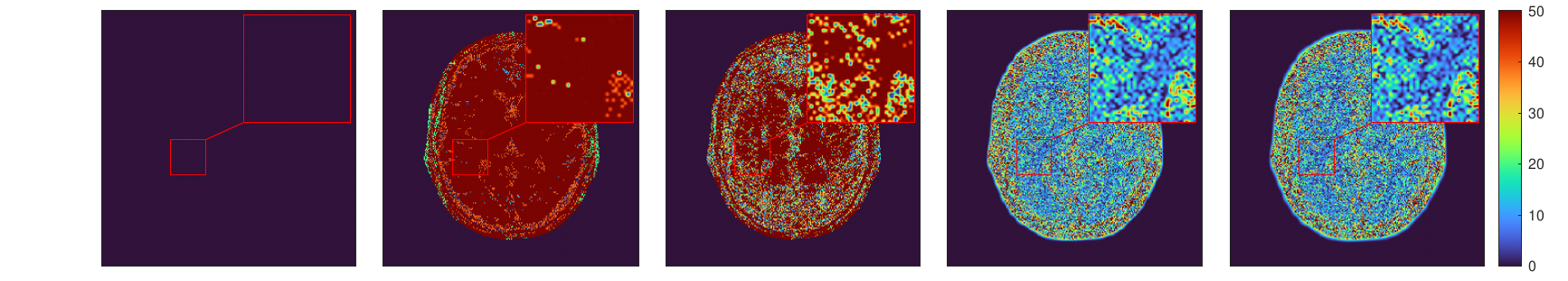}\vspace{0.5em}
\end{minipage}

\begin{minipage}[b]{0.03\textwidth}
\rotatebox{90}{{\hspace{5mm} $T_2$ maps}}
\end{minipage}
\begin{minipage}[b]{0.90\textwidth}
\centering
\includegraphics[width=1\textwidth,  trim={1.4cm 0 0 0},clip]{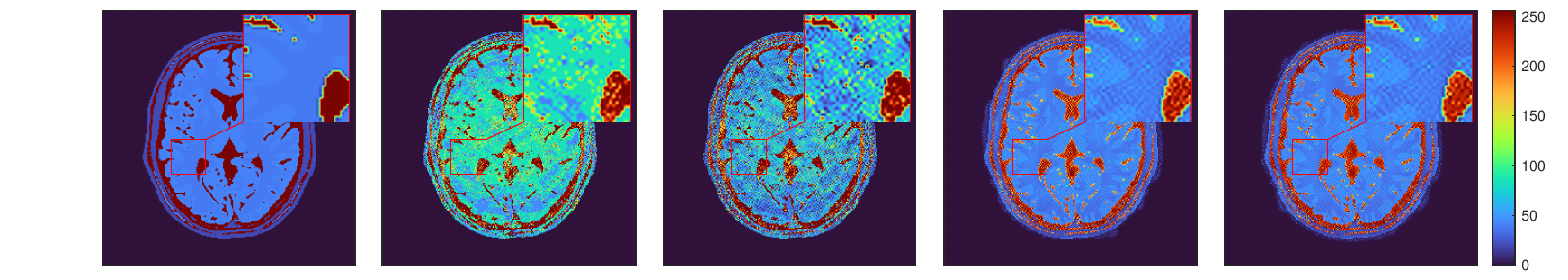} 
\end{minipage}

\begin{minipage}[b]{0.03\textwidth}
\rotatebox{90}{{\hspace{2mm} $T_2$ Error maps}}
\end{minipage}
\begin{minipage}[b]{0.90\textwidth}
\centering
\includegraphics[width=1\textwidth,  trim={1.4cm 0 0 0},clip]{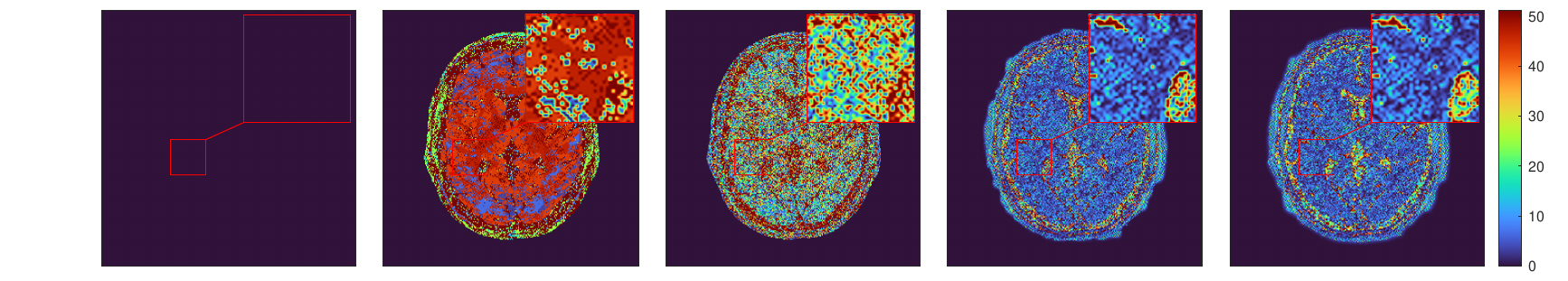} 
\end{minipage}\vspace{0.5em}

\begin{minipage}[b]{0.03\textwidth}
\rotatebox{90}{{\hspace{7mm} $\rho$ maps}}
\end{minipage}
\begin{minipage}[b]{0.90\textwidth}
\centering
\includegraphics[width=1\textwidth,  trim={1.4cm 0 0 0},clip]{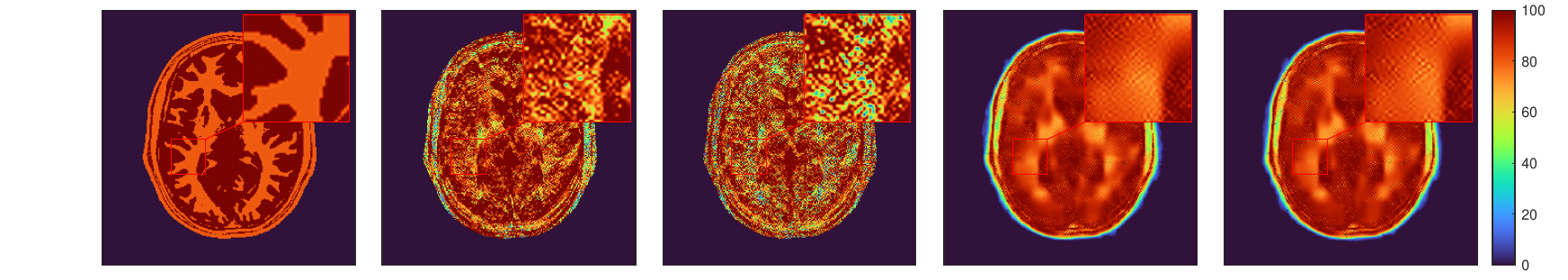} 
\end{minipage}

\begin{minipage}[b]{0.03\textwidth}
\rotatebox{90}{{\hspace{1mm} $\rho$ Error maps}}
\end{minipage}
\begin{minipage}[b]{0.90\textwidth}
\centering
\includegraphics[width=1\textwidth,  trim={1.4cm 0 0 0},clip]{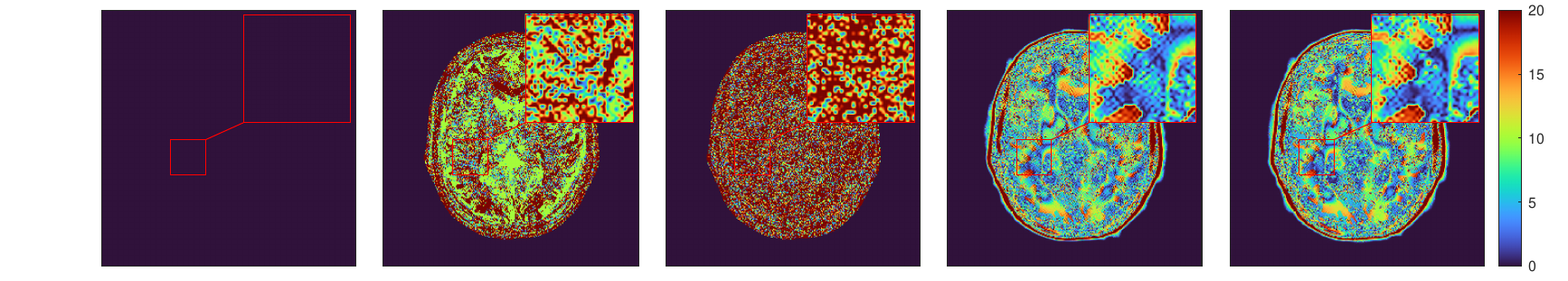} 
\end{minipage}

\caption{Comparison of the estimated parameters $T_1$ and $T_2$ (milliseconds) and proton density $\rho$ (dimensionless; relative ratio). The figure compares the reconstruction quality of the BLIP method, the LM approach from \cite{dong2019quantitative}, and the DL approach proposed in this work for the $32 \times$ undersampling case.
}
\end{figure} \label{fig:qMRI_comparisons_dictionary_large_noise}

\section{Conclusion and Outlook}

In this paper, we propose a dictionary learning-based regularization method for parameter identification in a time-discrete dynamical system framework. This nonlinear inverse problem uses dictionary learning to adapt the regularizer to the image data, with a primary focus on applications in quantitative MRI. The resulting optimization problem is large-scale, nonconvex, and nonsmooth. To tackle it, we employ a nested alternating scheme and develop a theoretical framework based on subdifferential regularity. We establish strong  local convergence to global minima and show that artificially high KL-exponents are needed to ensure fast linear convergence—highlighting a departure from classical block optimization theory, which typically assumes a global KL inequality in both directions. Block optimization in nonconvex settings remains poorly understood, especially regarding local convergence rates, which are often tied to hard-to-compute global KL-exponents. To address this, we analyze individual blocks and identify conditions under which fast strong local convergence can still be achieved. Finally, the impact of the step size $\lambda_k > 0$ remains an open question. Incorporating acceleration techniques, such as Nesterov's method or other related strategies, may further enhance convergence speed and overall algorithmic efficiency.
\\
From inverse problems point of view, many questions related to regularization theory of the proposed formulation are open. For instance, the stability of the regularized solutions, and their convergence to the model solutions under more realistic assumptions than the ones used in \cref{sec:regularization}. This naturally raises the question of how to select the parameters $\lambda, \alpha$ and $\beta$ in the numerical implementation, which is a critical issue but not addressed in the paper. This problem remains open even in the finite-dimensional case for dictionary regularized linear inverse problems, and no result is available in the infinite dimensional, nonlinear context.\\
As discussed briefly in \cref{sec:regularization}
 it remains unclear to what extent \eqref{eq:minimum_norm_problem} is capable of identifying a good dictionary and producing a high-quality reconstruction, even in the noise-free case. These  identifiability issues are challenging to analyze, even in the context of pure matrix factorization problems, and  appear to be unexplored in the context of nonlinear inverse problems. It is worth noting that blind-dictionary regularization shares several similarities with recently studied deep image priors, where instead of learning a dictionary, a neural network is trained jointly with the reconstruction process. Some recent discussions for this approach can be found in \cite{buskulic2024convergence} and the references therein.
\newpage
\ack
{The work of GD is supported by the NSFC grant No. 12471402, the NSF Innovation Research Team Project of Guangxi No. 2025GXNSFGA069001, and the NSF grant of Hunan No. 2024JJ5413.
This work of MH and CS is supported by the Deutsche Forschungsgemeinschaft (DFG, German Research Foundation) under Germany's Excellence Strategy -- The Berlin Mathematics Research Center MATH+ (EXC-2046/1, project ID: 390685689) and by the priority program Non-smooth and Complementarity-based Distributed
Parameter Systems: Simulation and Hierarchical Optimization (SPP 1962).}

\newpage

\appendix
\section*{Appendix A: Dictionary learning routine}\label{sec:appendix_dict_learning}
Let us briefly describe the dictionary learning algorithm from \cite{ravishankar2015efficient}. Note that we need to apply this algorithm for any $u_i \in \lbrace \rho, T_1,T_2 \rbrace$ separately. 
The algorithm described below aims at computing a stationary point of the objective 
\begin{equation}
    \min_{D \in O_K,C \in \mathbb{R}^{M \times K}} \frac{1}{2} \| DC - Pu \|_{F}^2 + \beta \|C\|_1 \label{eq:dictionary_learning_problem_appendix}
\end{equation}
for a given single image $u \in \mathbb{R}^{n_1 \times n_2}$. It can be seen as a denoising algorithm for the patches collected in the matrix $Pu$.
\begin{algorithm}[h]
\caption{Orthogonal dictionary learning by alternating optimization, \cite{ravishankar2015efficient}}
\begin{algorithmic}[1]
\State Get $u \in \mathbb{R}^{n_1 \times n_2}$, $(D_0,C_0) \in O_K \times \mathbb{R}^{M \times K}$, accuracy $\eta>0$, sparsity regularization parameter $\beta>0$ and step-size parameters $\lambda_D^n,\lambda_C^n > 0$, for $n \in \mathbb{N}$.
\State Set $n:=0$;
\State  Initialize with $(D_n,C_n) := (D_0,C_0)$ .\While{ {$\|D_{n} - D_{n-1} \|_F^2 + \|C_{n} - C_{n-1}\|_F^2 > \eta_k^2 $}}
	\State  Compute $D_{n+1} \in O_K$ by solving 
    \begin{equation}
        D_{n+1} \in \argmin_{D \in O_K} \frac{1}{2} \| D C_n - Pu \|_F^2 +  \frac{\lambda_{D}^n}{2} \|D - D_n\|_F^2.   \label{eq:dict_learning_PD}
    \end{equation}
    \State Compute $C_{n+1} \in \mathbb{R}^{M \times K}$ by solving
    \begin{equation}
        C_{n+1} \in \argmin_{C \in \mathbb{R}^{M \times K}} \frac{1}{2} \| D_{n+1} C - Pu \|_F^2 +  \frac{\lambda_{C}^n}{2} \|C - C_n\|_F^2 + \beta \| C \|_1.  \label{eq:dict_learning_PC}
    \end{equation}
    \State Set $n = n+1$.
\EndWhile   
\State Return $(D_{n},C_n)$.
\end{algorithmic}
\end{algorithm}\label{alg:algorithm_dictionary}
\noindent
We collect the properties of \cref{alg:algorithm_dictionary} in the following lemma.
\begin{lemma}[Properties of the dictionary learning algorithm]\label{lem:properties_dict_learning_alg} Consider \cref{alg:algorithm_dictionary}. Then the following assertions are true:
\begin{itemize}
    \item[(i)]Problem \eqref{eq:dict_learning_PD} admits the closed form solution 
       $ D_{n+1} = U V^\top$, 
    where $U \Sigma V^\top = (Pu)  C_n^\top + \lambda_D^n D_n$ is the singular value decomposition. 
    \item[(ii)] Problem \eqref{eq:dict_learning_PC} admits a closed form solution, which is given by
    \begin{align}
        C_{n+1} = \mathrm{prox}_{\beta_n \| \cdot \|_1 } \left( \frac{D_{n+1}^\top Pu + \lambda_C^n C_n}{1+\lambda_C^n} \right),\notag \quad \beta_n = \frac{\beta}{1+\lambda_C^n},
    \end{align}
    where the proximal operator $\mathrm{prox}_{\beta_n \| \cdot \|_1} : \mathbb{R}^{M \times K} \to \mathbb{R}^{M \times K}$ is defined via \emph{soft-thresholding}, i.e.,
    \begin{equation}
        [\mathrm{prox}_{\beta_n \| \cdot \|_1} (C) ]_{i,j} = 
        \begin{cases}
            C_{ij} - \beta_n &\text {if $C_{ij} \geq \beta$} \\
            0 & \text{if $-\beta_n \leq C_{ij} \leq  \beta_n$}\\
            C_{ij} + \beta_n &\text {if $C_{ij} \leq \beta$}
        \end{cases} \notag
    \end{equation}
    \item[(ii)] Let the sequences $\left(\lambda_{D}^n,\lambda_{C}^n \right)_{n \in \mathbb{N}}$ be  bounded, i.e.  $a_D \leq {\lambda}_D^n \leq b_D$ and $ a_C \leq \lambda_C^n \leq b_C$ for $0 < a_D,a_C,b_D,b_C$ and all $n \in \mathbb{N}$. Then the sequence $(D_n,C_n)_{n \in \mathbb{N}}$ that is produced by \cref{alg:algorithm_dictionary} is a descent sequence for problem \eqref{eq:dictionary_learning_problem_appendix} in the sense of  \autoref{def:descent_sequence} with parameters $\sigma_1 = \min(a_D,b_C)$ and $\sigma_2 = \max(L_C, b_C,b_D)$, where $L_C := \sup_{n \in \mathbb{N}} \| C_n \|_F < + \infty$. 
\end{itemize}
\end{lemma}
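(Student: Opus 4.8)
The plan is to treat the three assertions in order: the first two are closed-form computations for each block subproblem, while the third verifies the two requirements of \autoref{def:descent_sequence}. Throughout I abbreviate the fixed patch matrix by $Y := Pu$ and write $\Phi(D,C) := \tfrac12\|DC - Y\|_F^2 + \beta\|C\|_1 + \mathcal{I}_{O_K}(D)$ for the objective of \eqref{eq:dictionary_learning_problem_appendix}, which plays the role of $g_k$ in \autoref{def:descent_sequence}. For part (i), I would expand the $D$-subproblem \eqref{eq:dict_learning_PD} and exploit orthogonality: since $D^\top D = I$ on $O_K$, both $\|DC_n\|_F^2 = \|C_n\|_F^2$ and $\|D\|_F^2 = K$ are constant, so minimizing reduces to \emph{maximizing} $\langle D,\, Y C_n^\top + \lambda_D^n D_n\rangle_F$ over $D \in O_K$, using $\langle DC_n, Y\rangle_F = \langle D, YC_n^\top\rangle_F$. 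This is the orthogonal Procrustes problem; writing the SVD $YC_n^\top + \lambda_D^n D_n = U\Sigma V^\top$ and substituting $Z := V^\top D^\top U \in O_K$ gives $\langle D, U\Sigma V^\top\rangle_F = \mathrm{tr}(Z\Sigma) = \sum_i Z_{ii}\sigma_i \le \sum_i \sigma_i$, with equality exactly when $Z = I$, i.e. $D = UV^\top$, yielding the claimed formula.

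For part (ii), I would use that $D_{n+1}$ is orthogonal to rewrite $\|D_{n+1}C - Y\|_F^2 = \|C - D_{n+1}^\top Y\|_F^2$, then complete the square in the two quadratic terms of \eqref{eq:dict_learning_PC}: the sum $\tfrac12\|C - D_{n+1}^\top Y\|_F^2 + \tfrac{\lambda_C^n}{2}\|C - C_n\|_F^2$ equals $\tfrac{1+\lambda_C^n}{2}\bigl\|C - \tfrac{D_{n+1}^\top Y + \lambda_C^n C_n}{1+\lambda_C^n}\bigr\|_F^2$ up to a constant. The minimizer of this expression plus $\beta\|C\|_1$ is by definition $\mathrm{prox}_{\beta_n\|\cdot\|_1}$ applied to the displayed argument with $\beta_n = \beta/(1+\lambda_C^n)$, and the $\ell_1$-prox acts entrywise as soft-thresholding, giving the stated closed form.

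For part (iii) I would check the two conditions of \autoref{def:descent_sequence} separately. \textbf{Sufficient decrease:} comparing the minimizer $D_{n+1}$ of \eqref{eq:dict_learning_PD} against the feasible point $D_n$ gives $\Phi(D_{n+1},C_n) \le \Phi(D_n,C_n) - \tfrac{a_D}{2}\|D_{n+1}-D_n\|_F^2$, and the analogous comparison of $C_{n+1}$ against $C_n$ in \eqref{eq:dict_learning_PC} gives $\Phi(D_{n+1},C_{n+1}) \le \Phi(D_{n+1},C_n) - \tfrac{a_C}{2}\|C_{n+1}-C_n\|_F^2$. Chaining these and taking the worse constant yields \eqref{eq:descent_z} with descent constant governed by the \emph{lower} bounds of the proximal parameters, i.e. $\sigma_1 = \min(a_D,a_C)$. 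A consequence is that $n \mapsto \Phi(D_n,C_n)$ is non-increasing, so $\beta\|C_n\|_1 \le \Phi(D_0,C_0)$ and hence $\|C_n\|_F \le \|C_n\|_1 \le \Phi(D_0,C_0)/\beta$, establishing $L_C < \infty$.

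\textbf{Slope bound:} the optimality conditions of the two subproblems produce $\xi \in \beta\partial\|C_{n+1}\|_1$ and $\zeta$ in the normal cone $N_{O_K}(D_{n+1})$ with $D_{n+1}^\top(D_{n+1}C_{n+1}-Y) + \xi = -\lambda_C^n(C_{n+1}-C_n)$ and $(D_{n+1}C_n - Y)C_n^\top + \zeta = -\lambda_D^n(D_{n+1}-D_n)$. The pair $\bigl((D_{n+1}C_{n+1}-Y)C_{n+1}^\top + \zeta,\ D_{n+1}^\top(D_{n+1}C_{n+1}-Y)+\xi\bigr)$ lies in $\partial\Phi(D_{n+1},C_{n+1})$; its $C$-component is bounded by $b_C\|C_{n+1}-C_n\|_F$, while for the $D$-component I would insert the second optimality relation and estimate the mismatch $(D_{n+1}C_{n+1}-Y)C_{n+1}^\top - (D_{n+1}C_n - Y)C_n^\top$ via $C_{n+1}C_{n+1}^\top - C_nC_n^\top = C_{n+1}(C_{n+1}-C_n)^\top + (C_{n+1}-C_n)C_n^\top$, yielding a bound of order $(2L_C + \|Y\|_F)\|C_{n+1}-C_n\|_F + b_D\|D_{n+1}-D_n\|_F$. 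Taking the maximal constant then gives \eqref{eq:gradient_inequality_z} with $\sigma_2 = \max(L_C,b_C,b_D)$, up to absorbing the fixed factor $\|Y\|_F$ and numerical constants. I expect this final estimate to be the main obstacle: one must correctly handle the normal cone $N_{O_K}$ within the limiting subdifferential and, crucially, convert the \emph{mismatched} optimality conditions (which involve $C_n$ in the $D$-step but $C_{n+1}$ in $\partial\Phi$) into a bound depending only on $\|z^{n+1}-z^n\|_Z$, for which the uniform boundedness $L_C < \infty$ established above is indispensable.
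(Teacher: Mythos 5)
The paper itself does not supply a proof of this lemma: it simply declares the result standard and omits it for brevity. Your proposal therefore fills in exactly the argument the authors are appealing to, and it is the standard one: orthogonal Procrustes (reduce the $D$-step to maximizing $\langle D, (Pu)C_n^\top + \lambda_D^n D_n\rangle_F$ over $O_K$ via invariance of $\|DC_n\|_F$ and $\|D\|_F$, then the trace inequality $\mathrm{tr}(Z\Sigma)\le\mathrm{tr}(\Sigma)$ for orthogonal $Z$); completion of squares plus the orthogonal invariance $\|D_{n+1}C-Pu\|_F=\|C-D_{n+1}^\top Pu\|_F$ for the $C$-step, giving the soft-thresholding formula; and, for the descent-sequence property, comparison of each block minimizer against the previous iterate (sufficient decrease), coercivity of the $\ell_1$ term to get $L_C<\infty$, and the two block optimality conditions combined with the splitting $C_{n+1}C_{n+1}^\top - C_nC_n^\top = C_{n+1}(C_{n+1}-C_n)^\top + (C_{n+1}-C_n)C_n^\top$ to bound the subgradient mismatch. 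The sum-rule step placing $\bigl(\nabla_D f+\zeta,\nabla_C f+\xi\bigr)$ in $\partial\Phi(D_{n+1},C_{n+1})$ with $\zeta\in N_{O_K}(D_{n+1})$, $\xi\in\beta\partial\|C_{n+1}\|_1$ is also the correct and standard justification. I consider the proof correct.

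Two small points of divergence from the lemma \emph{statement}, both of which speak in your favor rather than against you. First, your sufficient-decrease constant is $\sigma_1=\min(a_D,a_C)$, governed by the \emph{lower} bounds of the proximal parameters, whereas the lemma asserts $\sigma_1=\min(a_D,b_C)$; since $\min(a_D,a_C)\le\min(a_D,b_C)$, the lemma's constant cannot be derived from the block comparisons and is almost certainly a typo for your value. Second, your slope bound carries the additional additive contributions $2L_C+\|Pu\|_F$ (and a sum rather than a maximum of the block constants), so the clean value $\sigma_2=\max(L_C,b_C,b_D)$ claimed in the lemma is likewise imprecise; what matters for \autoref{assumption_z_step} is only that some $k$-independent $\sigma_2>0$ exists, which your estimate delivers. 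You are transparent about absorbing $\|Pu\|_F$ into the constant, which is the honest reading of the statement.
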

\begin{proof}
    The proof of this statement is standard. Hence, we do not display it here for the sake of brevity.
\end{proof}

\section*{Appendix B: Proofs of \autoref{sec:nonlinear_inverse_pro}.}\label{Appendix_B_proofs_sec_2}
In this section we prove the missing results in \autoref{sec:nonlinear_inverse_pro}. We start by 
\begin{proof}(of \autoref{theorem_properties_pi_map})
    For the sake of readability, we will again write $\mathbb{T} = (T_1,T_2) \in \mathbb{R} ^2$. 
    Note that (i) is a direct consequence of the corresponding properties of $E_k$ and $b_k$. Let us prove (ii). For this purpose we first investigate the properties of the time discrete magnetization $m: \mathbb{R}^2  \to \mathbb{R}^{3L}$. The Fréchet derivatives of $m$ are computed iteratively using the chain rule and the sum rule.  For directions $h,h_1,h_2 \in \mathbb{R}^3$, we obtain 
    \begin{align}
        m_{k+1}(\mathbb{T}) &= E_k(\mathbb{T}) R_k m_k(\mathbb{T}) + b_k(\mathbb{T}), \label{eq:thm1_eq1}\\
        m'_{k+1}(\mathbb{T})[h] &=  E_k'(\mathbb{T})[h] R_k \cdot m_k(\mathbb{T}) +  E_k(\mathbb{T}) R_k \cdot  m'_k(\mathbb{T})[h] +  b'_k(\mathbb{T})[h],  \label{eq:thm1_eq2}\\
        m''_{k+1}(\mathbb{T})[h_1,h_2] &=  E''_k(\mathbb{T})[h_1,h_2]  R_k \cdot m_k(\mathbb{T})  
        +  E'_k(\mathbb{T})[h_1]  R_k \cdot  m'_k(\mathbb{T})[h_2] \notag\\\ 
        &\quad +   E'_k(\mathbb{T})[h_2] R_k \cdot  m'_k(\mathbb{T})[h_1] + E_k(\mathbb{T})R_k \cdot  m''_k(\mathbb{T})[h_1,h_2] \notag\\  
        &\quad +  b''_k(\mathbb{T})[h_1,h_2].\label{eq:thm1_eq3}
    \end{align}
    Since $R_k:\mathbb{R}^3 \to \mathbb{R}^3$ are rotation matrices, we have $\|R_k\|_{\mathcal{L}(\mathbb{R}^3,\mathbb{R}^3)} = 1$. From \eqref{eq:thm1_eq1} we directly see that 
    \begin{equation}
        \|m_{k+1}(\mathbb{T}) \|_{2} \leq \| E_k(\mathbb{T}) \|_{\mathcal{L}(\mathbb{R}^{3 \times 3})} \| m_k( \mathbb{T})\|_{2}  + \| b_k(\mathbb{T})\|_{2} 
        \leq 
        C \| m_k(\mathbb{T})\|_{2}  + b. \notag
    \end{equation}
    For constants $b,C > 0$ that are uniformly bounded in $k$. Here we used the boundedness of the function $t \mapsto \exp(-1/t)$ on $\mathbb{R}$.  Hence it is easy to infer by iterating over all $k$ that $\|m(\mathbb{T})\|_{\mathbb{R}^{3 \times L}} \leq C$ for some uniform constant $C>0$ by induction. Similarly, using \eqref{eq:thm1_eq2}, we show for $\|h \|_2= 1$ that
    \begin{align*}
        \| m'_{k+1}(\mathbb{T})[h] \|_2 
        &\leq
        \| E'_k(\mathbb{T})[h] \|_2 
        C + \|E_k(\mathbb{T}) \|_{\mathcal{L}(\mathbb{R}^{3 \times 3})}  \| m'_k(\mathbb{T})\|_{\mathcal{L}(\mathbb{R}^3,\mathbb{R}^{3})} + \|b'_k(\mathbb{T})\|_{\mathcal{L}(\mathbb{R}^3,\mathbb{R}^{3})} \\
        &\leq
        C \|h\|_2 
        C + \|E_k(\mathbb{T}) \|_{\mathcal{L}(\mathbb{R}^{3 \times 3})}  \| m'_k(\mathbb{T})\|_{\mathcal{L}(\mathbb{R}^3,\mathbb{R}^{3})} + \| b'_k(\mathbb{T})\|_{\mathcal{L}(\mathbb{R}^3,\mathbb{R}^{3})}.
    \end{align*}
    From the definition of $E_k,b_k$, one easily infers  that $\|E'_k(\mathbb{T})\|_{\mathcal{L}(\mathbb{R}^3,\mathbb{R}^{3\times 3})} \leq C$, $\|E_k(\mathbb{T}) \|_{\mathcal{L}(\mathbb{R}^{3 \times 3})} \leq C$ and $\|b'_k(\mathbb{T}) \|_{\mathcal{L}(\mathbb{R}^3,\mathbb{R}^3)} \leq C$ for a generic constant $C>0$. Thus, by induction 
    $\|m'\|_{\mathcal{L}(\mathbb{R}^3,\mathbb{R}^{3 \times L})} \leq C$ by a possibly larger constant $C>0$. Exactly the same argumentation applies to show that $$\| m''(\mathbb{T})\|_{\mathcal{L}^2(\mathbb{R}^3,\mathbb{R}^{3 \times L})} \leq C$$ for all $\mathbb{T} \in \mathbb{R}^2$ by making again $C>0$ larger. This proves (ii). In order to see $(iii)$ we note that by the mean value theorem, which guarantees  the existence of  a constant $L_m>0$ such that
    \begin{align*}
        \| m(\mathbb{T}_1) - m(\mathbb{T}_2) \|_2 = \|       
        \leq L_m \| \mathbb{T}_1 - \mathbb{T}_2 \|_2, \\
        \|  m'(\mathbb{T}_1) -  m'(\mathbb{T}_2) \|_{\mathcal{L}(\mathbb{R}^3,\mathbb{R}^{3 \times L})}\leq L_m \| \mathbb{T}_1 - \mathbb{T}_2 \|_2.
    \end{align*}
    Hence, we have shown the Lipschitz continuity of \( \pi \) and \( \pi' \). Consequently we obtain:
    \begin{align*}
        \|\pi(u_1) - \pi(u_2) \|_2 
        &=
        \| \rho_1 m(\mathbb{T}_1) - \rho_2 m(\mathbb{T}_2) \|_2 \\
        &\leq | \rho_1 - \rho_2| \| m(\mathbb{T}_1)\|_2 + |\rho_2|\|  m(\mathbb{T}_1)  -  m(\mathbb{T}_2) \|_2  \\
        &\leq 
        \|u_1 - u_2 \|_2 C + b L_m \|u_1 - u_2\|_2. 
    \end{align*}
    The latter estimate proves the Lipschitz continuity of $\pi$ as desired. Similarly, we obtain for a direction $h = (h_\rho, h_{\mathbb{T}}) \in \mathbb{R}^3$ that 
    \begin{align*}
        \| ( \pi'(u_1) -  \pi'(u_2)) [h] \|_2 
        &\leq  \|h_\rho m(\mathbb{T}_1)  - h_\rho m(\mathbb{T}_2) \|_2 + 
        \|\rho_1 m'(\mathbb{T}_1)[h_{\mathbb{T}}] - \rho_2 m'(\mathbb{T}_2)[h_{\mathbb{T}}] \|_2 \\
        &\leq  |h_\rho| \|  m(\mathbb{T}_1)  - m(\mathbb{T}_2) \|_2 + 
        |\rho_1 - \rho_2| \| m'(\mathbb{T}_1)\|_2 \|h_{\mathbb{T}}\| \\
        &\quad + |\rho_2| \| (m'(\mathbb{T}_1) - m'(\mathbb{T}_2))[h_{\mathbb{T}}] \|_2 \\
        &\leq  \|h \|_2 L_m  \|u_1  - u_2 \|_2 + 
        \|u_1 - u_2\|_2 C \|h\|_2 \\
        &\quad + b L_m \| u_1 - u_2\|_2 \| h \|_2 \\
        &\leq   (L_m + C + bL_m)  \|u_1  - u_2 \|_2 \|h \|_2,
    \end{align*}
    which proves eventually the Lipschitz continuity in (iii) with Lipschitz constant ${L = (L_m + C + bL_m) }$.
\end{proof}
\noindent
Using the theorem above, we may now prove  \autoref{theorem_properties_solution_operator}.
\begin{proof}(Of \autoref{theorem_properties_solution_operator}).
We employ the theory on abstract superposition operators developed in \cite{goldberg1992nemytskij} and make use of the notation $u=(\rho,\mathbb{T}) \in \mathbb{R}^3$ as above. For (i) we have to verify the growth condition $$\|\pi(u)\|_2 \leq c + c\|u\|_2^\frac{p}{q}$$ 
for all $u \in \mathbb{R}^3$ some $c>0$. Since $\pi(u) = \rho \cdot m_{12}({\mathbb{T}})$ we directly obtain $\| \pi(u) \|_2 = | \rho | \| m_{12}({\mathbb{T}})\|_2 \leq \| u \|_2 \|m({\mathbb{T}})\|_2$ and consequently the assertion (i). For (ii) we recall \cite[Theorem 7]{goldberg1992nemytskij}. For the continuous F-differentiability of $\Pi$ we have to verify that the mapping $u \mapsto \pi'(u(\cdot))$ is continuous as a mapping from $L^p(\Omega,\mathbb{R}^3)$ to $L^r(\Omega,\mathcal{L}(\mathbb{R}^3,\mathbb{C}^L))$ for $r=pq/(p-q)$. This can be shown by verifying again the growth condition
$$\|\pi'(u) \|_{\mathcal{L}(\mathbb{R}^3,\mathbb{C}^L)} \leq c + c\| u \|_2^{\frac{p}{r}}$$
for all $u \in \mathbb{R}^3$ and some $c>0$. Since $\|  \pi'(u)[h]\|_2 = \|h_\rho m({\mathbb{T}}) + \rho m'({\mathbb{T}})[h_{\mathbb{T}}] \|_2 \leq |h_\rho| \|m({\mathbb{T}})\| + |\rho| \| m'({\mathbb{T}}) \|_2 \|h_{\mathbb{T}}\|_2$ again for a direction $h = (h_\rho,h_{\mathbb{T}}) \in \mathbb{R}^3$. By the boundedness of $m({\mathbb{T}})$ and $ m'({\mathbb{T}})$ we obtain $\| \pi'(u)[h]\|_2 \leq (c + c\|u\|_2) \|h\|_2 $ for some $c>0$ as desired. Since $p/r = (p-q)/q = 1$ for $p=4$ and $q=2$ the statement follows. In order to show (iii) we invoke theorem 9 of \cite{goldberg1992nemytskij}. Completely analogous to (ii) we have to show that $u \mapsto \pi''(u(\cdot))$ is continuous as a mapping from $L^p(\Omega,\mathbb{R}^3)$ to $L^s(\Omega,\mathcal{L}^2(\mathbb{R}^3,\mathbb{C}^L))$ for $s=pq/(p-2q)$, which again can be done by verifying the growth condition
$$\| \pi''(u) \|_{\mathcal{L}^2(\mathbb{R}^3,\mathbb{C}^L)} \leq c + c\| u \|_2^{\frac{p}{s}}$$ for all $u \in \mathbb{R}^3$ and some other $c>0$ as above. Calculating 
\begin{equation}
     \pi'' (u)[h^1,h^2] =  h_\rho^2 m'({\mathbb{T}})[h_{\mathbb{T}}^1] + h_\rho^1 m'({\mathbb{T}})[h_{\mathbb{T}}^2] + \rho m''({\mathbb{T}})[h^1_{\mathbb{T}},h^2_{\mathbb{T}}] \label{eq:growth_cond_2}
\end{equation}
yields indeed the growth condition $\|  \pi'' (u)[h^1,h^2] \|_2 \leq (c + c\|u\|_2) \|h^1\|_2 \|h^2\|_2$ for all $h^1,h^2 \in \mathbb{R}^3$ with decomposition $h^{i} = (h^{i}_\rho,h^{i}_{\mathbb{T}})$. Hence the growth-condition is satisfied for $p/s = 1$ from which we get $1 = p/s = (p-2q)/q = (p - 4)/2$ using $q=2$. Hence for $p\geq 6$ the \eqref{eq:growth_cond_2} is satisfied, and second order Frechet-differentiability holds.   
\end{proof}
\section*{Appendix C: Proofs of \autoref{sec:nested_optimization}.}\label{Appendix_B_proofs_sec_3}
\begin{proof}(Of \autoref{lemma_general_descent})
Set again $R_1 = \mathcal{I}_{U_{ad}}$. By the definition of $J$ and \autoref{fundamental_inequality} we obtain directly the following inequality
\begin{align*}
J(u, z_{k+1})   
&= f(u) + \frac{\alpha}{2}\| \nabla u \|_{L^2(\Omega)}^2 +  h(u,z_{k+1}) + R_1(u) + R(z_{k+1}) \\
 &\geq
 g(u,u_k) - \frac{L_2}{2} \|u - u_k \|^2_{H_0^1(\Omega)} + \frac{\alpha}{2}\| \nabla u \|_{L^2(\Omega)}^2 + h(u,z_{k+1}) + R_1(u) + R(z_{k+1}). 
\end{align*}
Now we invoke the convexity of $g_{\lambda_k}(\cdot, u_k)$ and the minimization property of $u_{k+1}$ to obtain: 
\begin{align*}
J(u, z_{k+1})   
&\geq 
 g_{\lambda_k}(u,u_k) - \left(\frac{\lambda_k + L_2}{2} \right) \|u - u_k \|^2_{H_0^1(\Omega)}  \\
 &\quad + \frac{\alpha}{2}\| \nabla u \|_{L^2(\Omega)}^2 + h(u,z_{k+1}) + R_1(u) + R(z_{k+1}) \\
 &\geq
 g_{\lambda_k}(u_{k+1},u_k)  - \left( \frac{\lambda_k + L}{2}\right) \|u - u_k \|^2_{H_0^1(\Omega)}  \\   
 & \quad + \frac{\alpha}{2}\| \nabla u_{k+1} \|_{L^2(\Omega)}^2 + h(u_{k+1},z_{k+1}) + R_1(u_{k+1}) + R(z_{k+1})  \qquad \qquad \text{(by minimization)}\\
 &\geq
 g(u_{k+1},u_k) + \frac{ L_2 }{2} \|u_{k+1} - u_k \|^2_{H_0^1(\Omega)}  + \left(\frac{ \lambda_k - L_2}{2} \right)\|u_{k+1} - u_k \|^2_{H_0^1(\Omega)}   \\   
 & \quad - \left(\frac{\lambda_k + L_2}{2}\right) \|u - u_k \|^2_{H_0^1(\Omega)} + \frac{\alpha}{2}\| \nabla u_{k+1} \|_{L^2(\Omega)}^2 + h(u_{k+1},z_{k+1}) + R_1(u_{k+1}) + R(z_{k+1}) \\
 &\geq
 f(u_{k+1}) + \frac{\alpha}{2}\| \nabla u_{k+1} \|_{L^2(\Omega)}^2 + \left( \frac{  \lambda_k - L_2}{2} \right) \|u_{k+1} - u_k \|^2_{H_0^1(\Omega)}   \\   
 & \quad - \left( \frac{\lambda_k + L_2}{2} \right) \|u - u_k \|^2_{H_0^1(\Omega)} + h(u_{k+1},z_{k+1}) + R_1(u_{k+1}) + R(z_{k+1}),
\end{align*}
which yields the desired estimate \eqref{eq:gen_descent_local_1}. We continue by setting $u=u_{k}$ in \eqref{eq:gen_descent_local_1} and obtain
\begin{equation}
    J(u_k,z_{k+1}) \geq J(u_{k+1},z_{k+1}) 
    + \left( \frac{\lambda_k - L_2}{2} \right) \| u_{k+1} - u_{k} \|^2_{H_0^1(\Omega)}. \notag
\end{equation}
Using the descent property of nested inner loop algorithm, \eqref{eq:descent_z} for some $\sigma_1 > 0$, we further conclude
\begin{equation}
    J(u_k,z_{k+1}) \leq J(u_k,z_{k}) -  \sigma_1 \sum_{i=1}^{n_k - 1} \|z_k^{i+1} -z_k^{i}\|^2_Z. \notag
\end{equation}
Combining the previous two inequalities yields \eqref{eq:gen_descent_local_2}.
\end{proof}

\printbibliography

\end{document}